\DeclareMathOperator*{\argmax}{arg\,max}
\DeclareMathOperator*{\argmin}{arg\,min}
\newtheorem{proposition}{Proposition}
\newtheorem{theorem}{Theorem}
\begin{document}

\title{Dynamic Transmission Line Switching Amidst Wildfire-Prone Weather Under Decision-Dependent Uncertainty}
\author{
Juan-Alberto Estrada-Garcia\thanks{Department of Industrial and Operations Engineering, University of Michigan, Ann Arbor, MI. \texttt{juanest@umich.edu}}
\and Ruiwei Jiang\thanks{Department of Industrial and Operations Engineering, University of Michigan, Ann Arbor, MI. \texttt{ruiwei@umich.edu}}
\and Alexandre Moreira\thanks{Lawrence Berkeley National Laboratory, Berkeley, CA. \texttt{amoreira@lbl.gov}}
}
\date{}

\maketitle

\begin{abstract}
During dry and windy seasons, environmental conditions significantly increase the risk of wildfires, exposing power grids to disruptions caused by transmission line failures. Wildfire propagation exacerbates grid vulnerability, potentially leading to prolonged power outages. To address this challenge, we propose a multi-stage optimization model that dynamically adjusts transmission grid topology in response to wildfire propagation, aiming to develop an optimal response policy. By accounting for decision-dependent uncertainty, where line survival probabilities depend on usage, we employ distributionally robust optimization to model uncertainty in line survival distributions. We adapt the stochastic nested decomposition algorithm and derive a deterministic upper bound for its finite convergence. To enhance computational efficiency, we exploit the Lagrangian dual problem structure for a faster generation of Lagrangian cuts. 
Using realistic data from the California transmission grid, we demonstrate the superior performance of dynamic response policies against two-stage alternatives through a comprehensive case study. In addition, we construct easy-to-implement policies that significantly reduce computational burden while maintaining good performance in real-time deployment.
\end{abstract}
\noindent\textbf{Keywords:} Line Switching, Wildfire, Multi-stage Optimization, Decision-dependent Uncertainty, Dual Dynamic Integer Programming

\maketitle

\section{Introduction}\label{sec:Intro}

Transmission grids are crucial infrastructures for societies worldwide. These grids occupy large physical spaces and are distributed in both urban and rural environments, interacting with risk factors to their continuous operation such as tall vegetation, buildings, and vehicles. Risk factors in environments can cause physical damages to key system components, therefore posing a major challenge for transmission grid operators~\citep{rathor2020energy}. Among these risk factors, wildfires have raised major concerns due to their reported impacts on multiple regions of the world and have increased frequency in recent decades~\citep{halofsky2020changing}. Wind, temperature, and humidity have significant influence in wildfires ignition and propagation, but their occurrence is difficult to predict~\citep{kondylatos2022wildfire}. Wildfires pose significant challenges to transmission grids, because they not only break down infrastructure (e.g., towers and poles;~\cite{dian2019integrating}) but also decrease the conductor ampacity (capacity) of transmission lines~\citep{choobineh2016power}, creating outages and cascading impacts. An example is the 2017 Thomas wildfire in California, where increased temperatures, ash, and fire decreased the operative capabilities of power lines. As a result, the lines were forced to shut down, while other lines increased power flow beyond their nominal capacity, causing power outages \citep{al2022impact}. Likewise, forest wildfires have caused a series of accidents related to power system in China. In February 2010, the Guizhou province power grid experienced 44 line-tripping accidents, most of which were attributed to phase-to-ground faults in wildfire-prone conditions~\citep{pu2015study}.

These incidents result in substantial repair costs and widespread power outages, leaving communities vulnerable during emergencies. The increasing frequency and intensity of wildfires due to climate change require robust strategies to protect power systems and ensure uninterrupted service.

One approach to mitigating the wildfire impacts is transmission line switching, which reconfigures the topology of transmission grids and reroutes the electricity flow around wildfire-prone regions~\citep{moreira2024distribution}. Line switching is considered an important measure in response to events in power systems. For example,~\cite{nagarajan2016optimal} suggests installing switches on transmission lines to enhance grid resiliency,~\cite{bayani2023resilient} applies line switching to hedge against wildfire events, and the California utility San Diego Gas \& Electric installed line switches to better mitigate wildfire impacts~\citep{udren2022managing}. When combined with other planning and restoration strategies, line switching has been shown to significantly enhance the resilience of power grids to the threats of extreme events including wildfires~\citep{abdelmalak2022enhancing,wang2022coordinated}.

Despite its wide applications, the modeling of line switching in wildfire-prone weather poses challenges. First, as wildfires decrease transmission line capacities, the probability of a line failure depends not only on environmental factors (temperature, humidity, wildfires in the vicinity, etc.) but also on the magnitude of power flow on the line, rendering the realization of line failures associated with \emph{decision-dependent uncertainty} (DDU;~\cite{muhs2020characterizing,moreira2024distribution}). Stochastic programs with DDU are computationally intractable~\citep{gupta2011solution,li2021review,ryu2019nurse}, mainly because (i) an accurate model of DDU demands estimating the distribution of uncertainty with respect to \emph{all} possible decisions, and (ii) DDU usually results in a nonconvex optimization model that is difficult to solve, especially for large-scale engineering systems such as transmission grids. Second, the dynamic and stochastic nature of wildfires adds complexity to decision making, as operators must consider the evolving threats. Traditional two-stage models (see, e.g.,~\cite{huang2017electrical,pianco2024decision}) to this problem can provide feasible switching plans, but fall short in capturing the dynamic states of wildfire propagation. In other words, two-stage line switching plans cannot be adjusted according to the most up-to-date wildfire states and hence are suboptimal.

To address these modeling challenges, in this paper we propose a multi-stage distributionally robust optimization (DRO) model. Specifically, we model the (line failure) DDU by adopting an ambiguity set of probability distributions, wherein the probability of a line failure depends parametrically on the magnitude of power flow passing through the line. This modeling choice waives the (burdensome) need to estimate the line failure probability for each power flow magnitude and, in addition, alleviates the potential misspecification of the line failure probabilities. This framework ensures that the ensuing line switching policy remains effective across a range of plausible scenarios, enhancing the robustness and reliability of power system operations under wildfire threats. Furthermore, we characterize the wildfire propagation using a scenario tree, which models the dynamic range and severity of the wildfire and accounts for its temporal and spatial variability. To mitigate the computational challenges arising from the large number of decision and state variables, we adapt the stochastic nested decomposition (SND) algorithm~\citep{zou2018multistage,zou2019stochastic,yu2022multistage}, derive deterministic upper bound for its finite convergence, and produce tight Lagrangian cuts by reusing past Lagrangian multipliers.

Our contributions in this work are fourfold. Firstly, we propose one of the first dynamic topology optimization models amidst wildfire risks as a multi-stage DRO formulation with DDU. Secondly, to solve this problem, we extend the SND algorithm to solve our DRO model with DDU. We leverage the binary state variables to compute a deterministic upper bound for the optimal cost-to-go with provable convergence. In addition, we produce tight Lagrangian cuts by reusing past Lagrangian multipliers, which speeds up the computation of SND significantly. Thirdly, leveraging the optimal dynamic policy produced by our DRO model, we construct two easy-to-implement policies that alleviate the demand of solving large-scale optimization formulations in real-time deployment. Finally, we demonstrate the performance of our model and solution approaches through a realistic transmission grid in California and its wildfire data. We compare the solutions and policies obtained by our proposed multi-stage framework to a two-stage benchmark in terms of line switching decisions and their performance on reducing load shedding.

The remainder of the paper is organized as follows. Section \ref{sec:LitRev} reviews relevant research and positions our work in the context of recent advancements. Section \ref{sec:StochOpt} presents the multi-stage DRO model with DDU. Section \ref{sec:DDR-SND} extends the SND algorithm to solve the DRO model. Section \ref{sec:NumStudies} presents the numerical results, before we discuss and conclude in Section~\ref{sec:Discussion}. All technical proofs are relegated to the appendix and additional numerical results are reported in an online repository~\citep{DLSData}.

\section{Literature Review}
\label{sec:LitRev}
With its long history of study, power system optimization problems (such as unit commitment and line switching) continue being challenging to solve, especially when considering large-scale systems and stochastic elements~\citep{mohseni2022stochastic}. In particular, transmission grids are vulnerable to failures caused by random events such as natural disasters and wildfires~\citep{sayarshad2023evaluating}. As a response, various planning or operational strategies, such as distributed generation~\citep{mohagheghi2015optimal}, line switching~\citep{fisher2008optimal}, and de-energizing~\citep{yang2024multiperiod}, have been proposed to enhance the grid resiliency. In this context, two-stage models have been widely applied, e.g., with first-stage decisions pertaining to line switching and network configuration before any failures take place and second-stage decisions pertaining to post-failure energy re-dispatch and load shedding~\citep[see, e.g.,][]{nguyen2020preparatory,mohseni2022stochastic,jalilpoor2022network}.

The performance of stochastic optimization approaches can suffer when uncertainty cannot be accurately modeled. To remedy this, two-stage distributionally robust extensions of these approaches have been proposed to increase the degree of robustness \citep[see, e.g.,][]{zhang2017two,moreira2024distribution,pianco2024decision}. In contrast to these works, we consider a multi-stage model to adjust the grid topology dynamically, based on the most up-to-date wildfire propagation states.

In Table~\ref{table:LitRev}, we position our work with respect to the relevant state-of-the-art. To the best of our knowledge, multi-stage models that hedge against wildfire risks have been scarce to date. Within this literature, our work is most related to~\citep{yang2024multiperiod,yang2024multistage}, which considered stochastic optimization models for de-energizing grid components and achieving grid resiliency. Notably,~\cite{yang2024multiperiod,yang2024multistage} considered ``endogenous'' fires, which originate from random component failures but are independent of power system operations. For example, in their setting, a de-energized component can still fail, ignite, and propagate a fire. This renders their endogenous fires de facto decision-\emph{in}dependent uncertainty (DIU). In addition,~\citep{zou2018multistage} considered a multi-stage stochastic optimization model with DIU for dynamic unit commitment. In contrast to~\cite{yang2024multiperiod,yang2024multistage,zou2018multistage}, our model adopts DRO for modeling the line failure DDU and it also allows full flexibility for topology reconfiguration (i.e., both opening and closing lines).
\begin{table}[ht]
\centering
\caption{Comparison of relevant works in literature.}
\label{table:LitRev}
\small
\begin{tabular}{llccccc}
\hline
Work & Application & Uncertainty & Type & Horizon & Objective \\
\hline
\citet{mohagheghi2015optimal} & Distributed generation & Failure & DIU & Two-stage & RO \\
\citet{zou2018multistage} & Unit commitment & Demand & DIU & Multi-stage & SO \\
\citet{yang2024multiperiod} & Component de-energizing & Failure & DIU & Two-stage & SO \\
\citet{yang2024multistage} & Component de-energizing & Failure & DIU & Multi-stage & SO \\
\citet{nguyen2020preparatory} & Distributed generation & Failure & DIU & Two-stage & SO \\
\citet{hosseini2020computationally} & Line switching & -- & -- & Single-stage & DET \\
\textbf{This work} & Line switching & Failure & DDU & Multi-stage & DRO \\
\hline
\end{tabular}
\vspace{0.5em}
\small
DIU: Decision-independent uncertainty, DET: Deterministic, RO: Robust, SO: Stochastic, DRO: Distributionally robust optimization.
\end{table}

Extending the seminal work on stochastic dual dynamic programming~\citep{pereira1991multi},~\citet{zou2019stochastic} proposed the stochastic dual dynamic integer programming (SDDiP) algorithm to obtain optimal mixed-integer dynamic policies and applied it to multi-stage unit commitment with demand uncertainty~\citep{zou2018multistage}. Most applications of SDDiP assume stage-wise independent uncertainties for more efficient computation. In contrast, as we model an evolving wildfire process that samples the next wildfire state based on the current state, our model waives this assumption and addresses a more general, stage-wise \emph{dependent} scenario tree. Accordingly, we adopt the SND algorithm~\citep{zou2019stochastic}, which generalizes SDDiP.

The convergence of the SND algorithm relies on (i) statistical upper bounds of the optimal value and (ii) cutting planes that iteratively refine the lower approximation of the cost-to-go function in each stage. For (i), statistical upper bounds in~\cite{zou2019stochastic} become inapplicable for our DRO model with DDU. As an alternative, we propose deterministic upper bounds, enabling an accurate evaluation of the optimality gap. To improve (ii), existing works have proposed various approaches to generate stronger Lagrangian cuts for binary state variables (see~\cite{yang2024multistage,chen2022generating} and the references therein). We leverage this body of work to derive cuts for general (mixed-integer) state variables. In addition, we propose an algorithm to reuse Lagrangian multipliers in past SND iterations and generate tight Lagrangian cuts faster.

\section{Distributionally Robust Dynamic Line Switching}\label{sec:StochOpt}

We formulate a model to prescribe a dynamic line switching policy for a transmission grid. We describe the scenario tree in Section~\ref{subsec:AssumptNotation} and present the model in Section~\ref{subsec:MSDRO}. We derive a deterministic representation for this model and characterize the worst-case failure distribution in Section~\ref{subsec:DetRep}.
\begin{figure}
\centering
\includegraphics[width=0.35\textwidth]{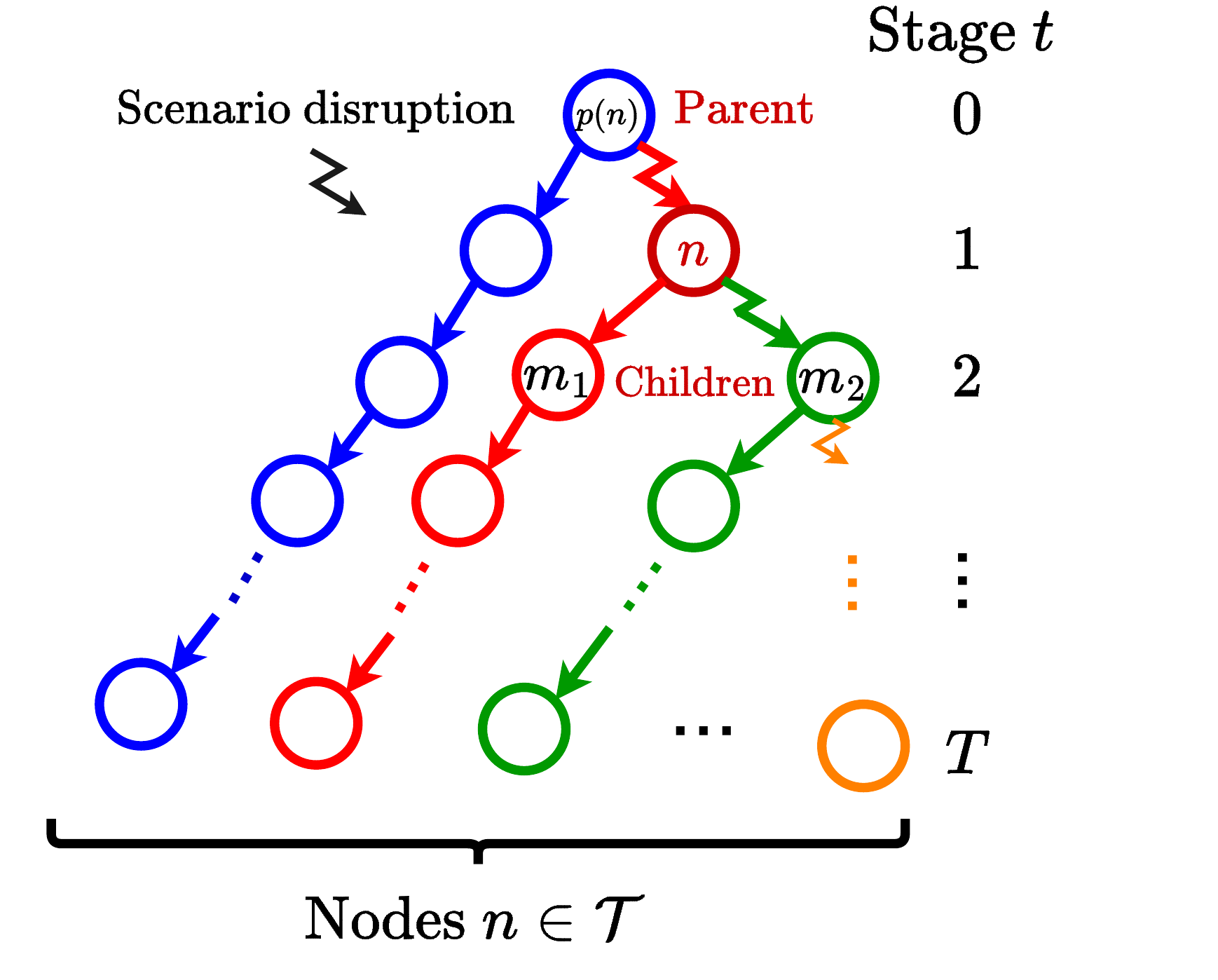}\caption{Scenario tree $\mathcal{T}$ of a general discrete stochastic process.
\label{fig:ScenTree}}
\end{figure}
\subsection{Scenario tree}
\label{subsec:AssumptNotation}
We model the wildfire propagation using a scenario tree \(\mathcal{T}\), whose nodes are organized in \(T\) levels/stages \(\mathcal{S}_t\) such that node \(1\) denotes the root node, \(\mathcal{T} = \bigcup_{t=1}^T \mathcal{S}_t\), \(\mathcal{S}_r\cap\mathcal{S}_t = \emptyset\) whenever \(r\neq t\), and each node \(n \in \mathcal{S}_t\) represents a possible wildfire state in time unit \(t\) (see Fig.~\ref{fig:ScenTree}). In addition, each non-leaf node \(n \in \mathcal{T}\) branches out to a set \(\mathcal{C}(n)\) of children nodes, and for each \(m \in \mathcal{C}(n)\), \(p_{nm}\) represents the (conditional) probability of branching to node \(m\) from \(n\). Conversely, for each non-root node \(n \in \mathcal{T}\), we denote by \(p(n)\) its parent node, i.e., \(n \in \mathcal{C}(p(n))\). By convention, we define \(p(1) := 0\) and \(\mathcal{C}(n) := \emptyset\) for all leaf nodes \(n\), and we call the path of wildfire state realizations from the root node to a leaf node \(n\) a ``scenario'' and denote it by \(\Pi(n)\).

The scenario tree \(\mathcal{T}\) is sufficiently general for modeling any finite and discrete stochastic process and for approximating a continuous one. For example, to model the wildfire propagation in a region, we can divide the region into a finite set \(\Sigma\) of cells and denote the state of each cell \(\sigma \in \Sigma\) in time unit \(t\) using an integer \(w^{\text{fire}}_{\sigma t}\), with \(w^{\text{fire}}_{\sigma t} = 0, 1, 2\) representing the cell being unburned, burning, and burned, respectively. This way, the state of the wildfire across all cells in time \(t\) can be encoded by a vector \(w^{\text{fire}}_t := \{w^{\text{fire}}_{\sigma t}: \forall \sigma \in \Sigma\}\) and \(\mathcal{T}\) can model the wildfire propagation by appropriately designating \(p_{nm}\) based on \(\mathbb{P}(w^{\text{fire}}_{t+1}|w^{\text{fire}}_t)\). Other data-driven approaches have also been proposed to construct scenario trees~\citep{heitsch2009scenario,oliveira2015time,yang2024multistage}. In this paper, we generate scenario trees based on historical wildfire perimeter data (see Appendix~\ref{apdx:ScenTree} for details).
\subsection{A multi-stage DRO model with DDU}
\label{subsec:MSDRO}
We start by following~\cite{hedman2010co} to formulate a deterministic line switching model for every node $n \in \mathcal{T}$. For ease of exposition, we summarize the nomenclature for all sets, parameters, and decision variables in Table~\ref{table:notation} and the detailed model in formulation~\eqref{model:S-OPF} of Appendix~\ref{apdx:notation}. This model takes in as parameters the current line availability states \(\tilde{\mathbf{a}}_n\), plus the line switching decisions \(\mathbf{z}_{p(n)}\), the power flows \((\mathbf{o}_{p(n)}, \mathbf{f}_{p(n)})\), and the power generation amounts \(\mathbf{p}_{p(n)}\) in the parent node \(p(n)\). Then, it seeks to minimize the generator production cost plus the load shedding cost, subject to phase angle limits, flow balance for each bus, and transmission line and generator capacity. 
We define a vector $\mathbf{x}_{n} := [\mathbf{z}_n,\mathbf{o}_n,\mathbf{f}_{n},\mathbf{p}_{n}]$ to group variables representing switching decisions, power flows, and power generation amounts; and we define another vector $\mathbf{y}_{n} := [\bm{\theta}_{n},\bm{\Delta}_{n}]$ to group variables representing voltage angles and load shedding amounts. Vectors \(\mathbf{x}_n\) and \(\mathbf{y}_n\) are different in that \(\mathbf{x}_n\) (\emph{inter}-stage variables) can be passed from node \(n\) to its children, while \(\mathbf{y}_n\) (\emph{intra}-stage variables) only affects the operations within node \(n\). Using these abstract notations, we express the deterministic line switching model in a compact form:\begin{equation*}
\min_{\mathbf{x}_{n},\mathbf{y}_{n}} \left\{f_{n}(\mathbf{x}_{n},\mathbf{y}_{n},\Tilde{\mathbf{a}}_{n}):A_n \mathbf{x}_{n} + W_{n}\mathbf{y}_{n} + C_{n} \mathbf{x}_{p(n)} + D_{n}\Tilde{\mathbf{a}}_{n} \geq h_{n}\right\},
\end{equation*}
where $f_{n}$ represents the objective function, and matrices $A_n,W_n,C_n,D_n$ and right-hand side vector $h_n$ encode the coefficients of constraints (see Appendix~\ref{apdx:notation} for nomenclature in Table~\ref{table:notation} and a detailed formulation in~\eqref{model:S-OPF}).

Then, we propose a multi-stage DRO model to prescribe an optimal line switching policy under line failure DDU. We consider two sources of uncertainties: firstly, the scenario tree \(\mathcal{T}\) models the (decision-independent) wildfire propagation uncertainty; and secondly, we model the (decision-dependent) line failure uncertainty through an ambiguity set, which depends on the latest wildfire state and the power flow on the line. Accordingly, the model is a dynamic program that, at each node \(n \in \mathcal{T}\), seeks to balance the immediate cost \(f_{n}(\mathbf{x}_{n},\mathbf{y}_{n},\Tilde{\mathbf{a}}_{n})\)and the expected cost-to-go with respect to the wildfire propagation distribution and the worst-case distribution of line failures. Specifically, the model evaluates \(\mathcal{Q}_1(\mathbf{x}_0, \mathbf{1})\) with a pre-specified system state \(\mathbf{x}_0\) before the planning horizon starts and zero line failure (\(\tilde{\mathbf{a}}_1 = \mathbf{1}\) almost surely), where
\begin{subequations}
\label{model:Q_n}
\begin{align}
\mathcal{Q}_{n}(\mathbf{x}_{p(n)},\Tilde{\mathbf{a}}_{n}) \ := \ \min_{\mathbf{x}_{n},\mathbf{y}_{n}} \quad & f_{n}(\mathbf{x}_{n},\mathbf{y}_{n},\Tilde{\mathbf{a}}_{n}) + \sum_{m \in \mathcal{C}(n)} p_{nm} \sup_{\mathbb{P} \in \mathcal{P}_m(\mathbf{x}_n)} \mathbb{E}_{\mathbb{P}}\left[ \mathcal{Q}_{m}(\mathbf{x}_{n},\Tilde{\mathbf{a}}_{m})\right] \label{model:Q_n-obj} \\
\text{s.t.}\quad & A_n \mathbf{x}_{n} + W_{n}\mathbf{y}_{n} + C_{n} \mathbf{x}_{p(n)} + D_{n}\Tilde{\mathbf{a}}_{n} \geq h_{n} \label{model:Q_n-con}
\end{align}
\end{subequations}
for all nodes \(n \in \mathcal{T}\). We model the line failure DDU using a DRO model, as opposed to a classical stochastic program, to take advantage of existing parametric prediction of line failure probability as a function of power flow magnitude (see, e.g.,~\cite{muhs2020characterizing,pianco2024decision}) and to waive the need to estimate such probability for each possible magnitude \(\mathbf{f}_n\). In particular, we follow~\cite{muhs2020characterizing,pianco2024decision} to define a moment ambiguity set
\begin{equation*}
\mathcal{P}_m(\mathbf{x}_{n}) = \left\{ 
\mathbb{P} \in \mathcal{P}(\mathcal{A}_{m}): \mathbb{E}_{\mathbb{P}}\left[ \tilde{\mathbf{a}}_{m}\right] \leq \beta_{m}\mathbf{x}_{n} + \gamma_{m} \right \}, 
\end{equation*}
\noindent where the line survival probability is bounded by an affine function of \(\mathbf{x}_n\) with constant matrix $\beta_{m} \in \mathbb{R}_{-}^{|\mathcal{L}| \times \text{dim}(\mathbf{x}_n)}$ and vector $\gamma_{m} \in \mathbb{R}_{+}^{|\mathcal{L}|}$, where \(\mathcal{L}\) denotes the set of transmission lines. Note that \((\beta_m, \gamma_m)\) depends on \(m\) and hence can be calibrated based on the latest wildfire state in node \(m\).
In addition, although the definition of \(\mathcal{P}_m(\mathbf{x}_{n})\) is general and allows the line survival probability to depend on all components of \(\mathbf{x}_n\), in this paper we focus on its (negative) correlation with the power flow magnitude $|f_{ln}|$ and the indicator \(o_{ln}\) for \(f_{ln}\) exceeding the nominal range. Finally, \(\mathcal{P}(\mathcal{A}_{m})\) denotes the set of all distributions supported on \(\mathcal{A}_{m}\), which is defined through
\begin{equation*}
    \mathcal{A}_{m} := \left\{ \Tilde{\mathbf{a}}_{m} \in \{0,1\}^{|\mathcal{L}|}: \; e^{\top}\Tilde{\mathbf{a}}_{m} \geq |\mathcal{L}| - K, \; \tilde{\mathbf{a}}_{m} \leq \tilde{\mathbf{a}}_{p(m)} \right\}. 
\end{equation*}
In other words, we allow for at most $K$ line failures and assume that a failed line during wildfires will remain dysfunctional throughout the planning horizon.

\subsection{Deterministic representation and worst-case distribution}
\label{subsec:DetRep}
Formulation~\eqref{model:Q_n} is challenging to solve directly because the worst-case expectation \(\displaystyle \sup_{\mathbb{P} \in \mathcal{P}_m(\mathbf{x}_n)} \mathbb{E}_{\mathbb{P}}\left[ \mathcal{Q}_{m}(\mathbf{x}_{n},\Tilde{\mathbf{a}}_{m})\right]\) embeds an infinite-dimensional optimization problem with respect to the probability distribution \(\mathbb{P}\). We derive a deterministic and finite-dimensional representation.
\begin{proposition} \label{prop:exp-rep}
For any fixed \(m \in \mathcal{T}\) and \(\mathbf{x}_n\), it holds that
\begin{subequations}
    \label{model:Q_min}
    \begin{align}
         \sup_{\mathbb{P} \in \mathcal{P}_m(\mathbf{x}_n)} \mathbb{E}_{\mathbb{P}}\left[ \mathcal{Q}_{m}(\mathbf{x}_{n},\Tilde{\mathbf{a}}_{m})\right] \ = \ \min_{\bm{\psi}_{m} \geq 0,\phi_{m}} \quad & \bm{\psi}_{m}^{\top} \beta_{m}\mathbf{x}_{n} + \bm{\psi}_{m}^{\top}\gamma_{m} + \phi_{m} \\
         \text{s.t.}\quad & \phi_{m} \geq \mathcal{Q}_{m}(\mathbf{x}_{n},\mathbf{a}_{m}) - \mathbf{a}_{m}^{\top} \bm{\psi}_{m}, \quad \forall \mathbf{a}_{m} \in \mathcal{A}_{m}. \quad \label{cst:Q_min:lowerapprox}
    \end{align}
\end{subequations}
\end{proposition}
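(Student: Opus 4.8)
The plan is to exploit the finiteness of the support \(\mathcal{A}_m\) to recast the inner worst-case expectation as a finite-dimensional linear program (LP) and then dualize. Because \(\mathcal{A}_m \subseteq \{0,1\}^{|\mathcal{L}|}\) is a finite set, every \(\mathbb{P} \in \mathcal{P}(\mathcal{A}_m)\) is fully described by probability weights \(\lambda_{\mathbf{a}_m} := \mathbb{P}(\tilde{\mathbf{a}}_m = \mathbf{a}_m)\) indexed by \(\mathbf{a}_m \in \mathcal{A}_m\). First I would rewrite the left-hand side of \eqref{model:Q_min} as
\[
\max_{\lambda \geq 0} \Bigl\{ \sum_{\mathbf{a}_m \in \mathcal{A}_m} \lambda_{\mathbf{a}_m}\, \mathcal{Q}_m(\mathbf{x}_n, \mathbf{a}_m) : \sum_{\mathbf{a}_m \in \mathcal{A}_m} \lambda_{\mathbf{a}_m} \mathbf{a}_m \leq \beta_m \mathbf{x}_n + \gamma_m, \ \sum_{\mathbf{a}_m \in \mathcal{A}_m} \lambda_{\mathbf{a}_m} = 1 \Bigr\},
\]
where the first (vector) constraint encodes the moment condition \(\mathbb{E}_{\mathbb{P}}[\tilde{\mathbf{a}}_m] \leq \beta_m \mathbf{x}_n + \gamma_m\) and the second encodes that \(\mathbb{P}\) is a probability measure. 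For fixed \(m\) and \(\mathbf{x}_n\), the coefficients \(\mathcal{Q}_m(\mathbf{x}_n, \mathbf{a}_m)\) are constants, so this is a genuine finite LP in \(\lambda\).

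Next I would form its LP dual. Assigning a nonnegative multiplier \(\bm{\psi}_m \in \mathbb{R}_+^{|\mathcal{L}|}\) to the moment inequality (nonnegative because the primal is a maximization with a \(\leq\) constraint) and a free multiplier \(\phi_m \in \mathbb{R}\) to the normalization equality, the dual LP reads \(\min_{\bm{\psi}_m \geq 0,\, \phi_m} \bm{\psi}_m^{\top}(\beta_m \mathbf{x}_n + \gamma_m) + \phi_m\) subject to \(\mathbf{a}_m^{\top}\bm{\psi}_m + \phi_m \geq \mathcal{Q}_m(\mathbf{x}_n, \mathbf{a}_m)\) for every \(\mathbf{a}_m \in \mathcal{A}_m\). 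Rearranging the last inequality as \(\phi_m \geq \mathcal{Q}_m(\mathbf{x}_n, \mathbf{a}_m) - \mathbf{a}_m^{\top}\bm{\psi}_m\) and expanding \(\bm{\psi}_m^{\top}(\beta_m\mathbf{x}_n+\gamma_m)=\bm{\psi}_m^{\top}\beta_m\mathbf{x}_n+\bm{\psi}_m^{\top}\gamma_m\) reproduces exactly the right-hand side of \eqref{model:Q_min}, including constraint \eqref{cst:Q_min:lowerapprox}.

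Finally I would close the gap via LP strong duality, which turns the weak-duality inequality into the claimed equality and also justifies writing \(\min\) (rather than \(\inf\)) on the right. The only bookkeeping that requires care is well-posedness: strong duality and attainment follow once the primal LP is feasible with finite optimum. Primal feasibility is precisely \(\mathcal{P}_m(\mathbf{x}_n) \neq \emptyset\), which I would take as the natural standing assumption on the calibrated data \((\beta_m,\gamma_m)\); boundedness follows because \(\mathcal{Q}_m(\mathbf{x}_n, \mathbf{a}_m)\) is finite over the finite set \(\mathcal{A}_m\), a consequence of relatively complete recourse in \eqref{model:Q_n} (the intra-stage problem always admits a feasible solution, e.g.\ through load shedding). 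I expect this feasibility/finiteness verification, rather than the dualization itself, to be the only real obstacle, since the dualization is standard finite LP duality applied to a moment problem with finite support.
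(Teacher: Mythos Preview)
Your proposal is correct and follows essentially the same approach as the paper: rewrite the supremum as a finite-dimensional LP in the probability weights (using that \(\mathcal{A}_m\) is finite), then take the standard LP dual with multipliers \(\bm{\psi}_m\) and \(\phi_m\) to obtain \eqref{model:Q_min}. The paper's proof is terser and does not explicitly discuss the feasibility/boundedness conditions for strong duality that you flag, but the argument is otherwise identical.
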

The deterministic representation~\eqref{model:Q_min} is a linear program with respect to \((\bm{\psi}_{m}, \phi_m)\). In addition, it produces a bilinear program when integrated back to formulation~\eqref{model:Q_n} (due to the product term \(\bm{\psi}_{m}^{\top} \beta_{m}\mathbf{x}_{n}\)), which can be solved by commercial solvers (e.g., GUROBI) directly. On the other hand, constraints~\eqref{cst:Q_min:lowerapprox} are exponential in number, which is computationally prohibitive if \(K\) is large. Nevertheless, we take advantage of the structure of \(\mathcal{Q}_{m}(\mathbf{x}_{n},\tilde{\mathbf{a}}_{m})\) to efficiently separate these constraints. Indeed, as shown in Section~\ref{sec:DDR-SND}, the SND algorithm replaces \(\mathcal{Q}_{m}(\mathbf{x}_{n},\tilde{\mathbf{a}}_{m})\) with an (iteratively improved) lower approximation \(\underline{\mathcal{Q}}_m(\mathbf{x}_{n},\tilde{\mathbf{a}}_{m})\), which is the pointwise maximum of a set \(\mathcal{H}_m\) of affine functions
\begin{equation}
\max_{h \in \mathcal{H}_m} \left\{ (\pi^{h})^{\top}\mathbf{x}_{n} + (\tau^{h})^{\top}\tilde{\mathbf{a}}_m + \omega^{h} \right\}, \label{Q-rep}
\end{equation}
where \((\pi^{h}, \tau^{h}, \omega^{h})\) are constants to be generated in the process of the SND algorithm.
\begin{algorithm}
\small
    \caption{Separation of constraints~\eqref{cst:Q_min:lowerapprox}}\label{alg:separation}
    \begin{algorithmic}[1]
    \For{\(h = 1, \ldots, |\mathcal{H}_m|\)}
        \If{\(\mathbf{1}^{\top}\tilde{\mathbf{a}}_{p(m)} = |\mathcal{L}| - K\)}
        \State Set \(\mathbf{a}^h = \tilde{\mathbf{a}}_{p(m)}\);
        \Else
        \State Set \(\mathbf{a}^h = \tilde{\mathbf{a}}_{p(m)}\);
        \State Find a permutation of set \(\Upsilon := \{j: (\tau^h - \bm\psi_m)_j \leq 0\}\), denoted by \((1), \ldots, (|\Upsilon|)\), such that \((\tau^h - \bm\psi_m)_{(1)} \leq (\tau^h - \bm\psi_m)_{(2)} \leq \cdots \leq (\tau^h - \bm\psi_m)_{(\Upsilon)}\);
        \State Set \(\mathbf{a}^h_{(j)} = 0\) for all \(j = 1, \ldots, \min\{|\Upsilon|, \mathbf{1}^{\top}\tilde{\mathbf{a}}_{p(m)} - |\mathcal{L}| + K\}\);
        \EndIf
        \State Set \(v^h := (\pi^{h})^{\top}\mathbf{x}_{n} + (\tau^{h} - \bm{\psi}_m)^{\top}\mathbf{a}^h + \omega^{h}\);
    \EndFor
    \State Return the \(\mathbf{a}^h\) such that \(v^h \geq v^k\) for all \(k \in \mathcal{H}_m\);
    \end{algorithmic}
\end{algorithm}
The next proposition shows that the separation of constraints~\eqref{cst:Q_min:lowerapprox} is polynomial in \(|\mathcal{H}_m|\).
\begin{proposition} \label{prop:separation}
If \(\mathcal{Q}_{m}(\mathbf{x}_{n},\mathbf{a}_{m})\) admits a representation in the form of~\eqref{Q-rep}, then Algorithm~\ref{alg:separation} solves the separation problem of constraints~\eqref{cst:Q_min:lowerapprox}, i.e.,
\begin{align}
    \max_{\mathbf{a} \in \mathcal{A}_{m}}\{\mathcal{Q}_{m}(\mathbf{x}_{n},\mathbf{a}) - \mathbf{a}^{\top} \bm{\psi}_{m}\} \label{eq:MaxCut} \tag{SEP}
\end{align}
in time \(O(|\mathcal{H}_m||\mathcal{L}|\log(|\mathcal{L}|))\).
\end{proposition}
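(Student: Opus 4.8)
The plan is to exploit the max-of-affine structure of $\mathcal{Q}_m$ to decompose~\eqref{eq:MaxCut} into $|\mathcal{H}_m|$ independent linear binary subproblems, each solvable by a single sort. First I would substitute the representation~\eqref{Q-rep} into the objective of~\eqref{eq:MaxCut} and interchange the two maximizations:
\[
\max_{\mathbf{a} \in \mathcal{A}_m}\{\mathcal{Q}_m(\mathbf{x}_n,\mathbf{a}) - \mathbf{a}^\top\bm{\psi}_m\} = \max_{h \in \mathcal{H}_m}\left\{ (\pi^h)^\top\mathbf{x}_n + \omega^h + \max_{\mathbf{a}\in\mathcal{A}_m}(\tau^h - \bm{\psi}_m)^\top\mathbf{a}\right\}.
\]
For each fixed $h$ the term $(\pi^h)^\top \mathbf{x}_n + \omega^h$ is constant and factors out, leaving the residual problem $\max_{\mathbf{a} \in \mathcal{A}_m}(\tau^h - \bm{\psi}_m)^\top \mathbf{a}$, a linear program over binary $\mathbf{a}$ with coefficient vector $c := \tau^h - \bm{\psi}_m$. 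I would then show that, for the $\mathbf{a}^h$ constructed in lines 2--8, the quantity $v^h$ in line 9 equals this inner optimum plus the factored constants, so that returning the maximizing $\mathbf{a}^h$ (line 10) solves (SEP).

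The crux is verifying that the greedy construction solves the inner binary problem. I would first observe that $\mathbf{a} \leq \tilde{\mathbf{a}}_{p(m)}$ forces $a_j = 0$ on the already-failed coordinates and leaves the remaining (\emph{free}) coordinates in $\{0,1\}$, while $e^\top \mathbf{a} \geq |\mathcal{L}| - K$ caps the total number of zeros at $K$; equivalently, at most $\mathbf{1}^\top\tilde{\mathbf{a}}_{p(m)} - |\mathcal{L}| + K$ free coordinates may be set to zero, which is precisely the nonnegative budget appearing in line 7. Starting from $\mathbf{a} = \tilde{\mathbf{a}}_{p(m)}$ (all free coordinates at $1$), flipping a free coordinate $j$ to $0$ changes the objective by $-c_j$, an improvement exactly when $c_j < 0$; coordinates with $c_j > 0$ should remain at $1$ and never consume budget. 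Hence an optimizer zeros out only free coordinates in $\Upsilon = \{j : c_j \leq 0\}$, and since the budget is limited it must prioritize the most negative entries. I would make this rigorous by an exchange argument: if an optimal $\mathbf{a}^\star$ zeros a free coordinate $i$ while leaving at $1$ a free coordinate $k$ with $c_k \leq c_i$, then zeroing $k$ instead of $i$ preserves feasibility and changes the objective by $c_i - c_k \geq 0$; iterating this swap yields exactly the greedy solution obtained by sorting $\Upsilon$ and zeroing its $\min\{|\Upsilon|,\,\mathbf{1}^\top\tilde{\mathbf{a}}_{p(m)}-|\mathcal{L}|+K\}$ most-negative members. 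The boundary case $\mathbf{1}^\top\tilde{\mathbf{a}}_{p(m)} = |\mathcal{L}|-K$ has zero budget, forcing $\mathbf{a}^h = \tilde{\mathbf{a}}_{p(m)}$, which matches lines 2--3.

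The main obstacle I anticipate is treating the cardinality constraint and the parent upper-bound constraint simultaneously in the exchange argument: one must ensure the sort in line 6 effectively ranges only over genuinely free coordinates (those with $(\tilde{\mathbf{a}}_{p(m)})_j = 1$), since zeroing an already-failed coordinate would expend budget without improving the objective. I would address this either by restricting $\Upsilon$ to free indices or by noting that failed coordinates contribute nothing to $c^\top\mathbf{a}$ and hence can be excluded a priori from consideration. Once per-$h$ optimality is established, taking the maximum over $h$ in line 10 solves (SEP), and the complexity bound follows by accounting: for each of the $|\mathcal{H}_m|$ affine pieces, forming $c$, identifying $\Upsilon$, setting the selected entries, and evaluating the inner products for $v^h$ are $O(|\mathcal{L}|)$, while sorting $\Upsilon$ costs $O(|\mathcal{L}|\log|\mathcal{L}|)$; summing over $h$ gives the claimed $O(|\mathcal{H}_m||\mathcal{L}|\log(|\mathcal{L}|))$.
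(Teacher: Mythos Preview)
Your proposal is correct and follows essentially the same approach as the paper: substitute the max-of-affine representation, swap the order of maximization, reduce each inner problem to a linear binary maximization over $\mathcal{A}_m$ solved by sorting, and sum the per-$h$ sorting costs. The paper's proof is terser---it simply asserts that the inner problem ``can be solved by sorting the entries $(\tau^h-\bm{\psi}_m)$''---whereas you flesh out the exchange argument and flag the free-versus-failed coordinate subtlety in $\Upsilon$, which the paper leaves implicit.
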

By Proposition~\ref{prop:separation}, we can quickly detect if a given solution \((\hat{\bm\psi}_m, \hat{\phi}_m)\) violates any of constraints~\eqref{cst:Q_min:lowerapprox}. Indeed, we can run Algorithm~\ref{alg:separation} to obtain an \(\mathbf{a}^h\) and then check if \(\hat{\phi}_m < \mathcal{Q}_m(\mathbf{x}_n, \mathbf{a}^h) - (\mathbf{a}^h)^{\top}\bm\hat{\bm\psi}_m\) (violation of~\eqref{cst:Q_min:lowerapprox} with respect to \(\mathbf{a}^h\)) or otherwise certify that \((\hat{\bm\psi}_m, \hat{\phi}_m)\) satisfies~\eqref{cst:Q_min:lowerapprox} with respect to all \(\mathbf{a} \in \mathcal{A}_m\). Accordingly, we can solve formulation~\eqref{model:Q_n} using delayed constraint generation. Specifically, we first solve a relaxation of~\eqref{model:Q_n} that ignores constraints~\eqref{cst:Q_min:lowerapprox} to obtain an incumbent solution \((\hat{\mathbf{x}}_n, \hat{\mathbf{y}}_n, \hat{\bm\psi}_m, \hat{\phi}_m)\). Then, we iteratively detect the violation of any of these constraints using Algorithm~\ref{alg:separation} and incorporate all violated constraints back into the relaxation till no violation can be found. At that point, the latest incumbent is then an optimal solution to~\eqref{model:Q_min}. Let set \(\mathcal{A}^*_m\) collect all \(\mathbf{a}^h\) obtained from Algorithm~\ref{alg:separation} before an optimal solution is certified. The following proposition derives a worst-case probability distribution for \(\tilde{\mathbf{a}}_m\) that attains \(\displaystyle \sup_{\mathbb{P} \in \mathcal{P}_m(\mathbf{x}_n)} \mathbb{E}_{\mathbb{P}}\left[ \mathcal{Q}_{m}(\mathbf{x}_{n},\Tilde{\mathbf{a}}_{m})\right]\).
\begin{proposition} \label{prop:worst-dist}
For any fixed \(m \in \mathcal{T}\) and \(\mathbf{x}_n\), it holds that
\begin{subequations}
    \label{model:Q_min-reduced}
    \begin{align}
         \sup_{\mathbb{P} \in \mathcal{P}_m(\mathbf{x}_n)} \mathbb{E}_{\mathbb{P}}\left[ \mathcal{Q}_{m}(\mathbf{x}_{n},\Tilde{\mathbf{a}}_{m})\right] \ = \ \min_{\bm{\psi}_{m} \geq 0,\phi_{m}} \quad & \bm{\psi}_{m}^{\top} \beta_{m}\mathbf{x}_{n} + \bm{\psi}_{m}^{\top}\gamma_{m} + \phi_{m} \label{wc-note-1} \\
         \text{s.t.}\quad & \phi_{m} \geq \mathcal{Q}_{m}(\mathbf{x}_{n},\mathbf{a}_{m}) - \mathbf{a}_{m}^{\top} \bm{\psi}_{m}, \quad \forall \mathbf{a}_{m} \in \mathcal{A}^*_{m}. \quad \label{cst:Q_min-reduced:lowerapprox}
    \end{align}
\end{subequations}
In addition, let \(\lambda^h\) denote the dual optimal solution associated with constraints~\eqref{cst:Q_min-reduced:lowerapprox} and define a probability distribution \(\mathbb{P}^*\) with \(\mathbb{P}^*\{\tilde{\mathbf{a}}_m = \mathbf{a}^h\} = \lambda^h\) for all \(h \in [|\mathcal{A}^*_m|]\). Then, it holds that
\[
\sup_{\mathbb{P} \in \mathcal{P}_m(\mathbf{x}_n)} \mathbb{E}_{\mathbb{P}}\left[ \mathcal{Q}_{m}(\mathbf{x}_{n},\Tilde{\mathbf{a}}_{m})\right] \ = \ \mathbb{E}_{\mathbb{P}^*}\left[ \mathcal{Q}_{m}(\mathbf{x}_{n},\Tilde{\mathbf{a}}_{m})\right].
\]
\end{proposition}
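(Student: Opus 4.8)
The plan is to establish Proposition~\ref{prop:worst-dist} in two movements: first showing that restricting the constraints in~\eqref{model:Q_min} to the subset \(\mathcal{A}^*_m\) incurs no loss, and then recovering the worst-case distribution from linear programming duality on the reduced problem. For the first movement, I would argue that the reduced problem~\eqref{model:Q_min-reduced} is a relaxation of~\eqref{model:Q_min} (fewer constraints, so a smaller feasible region after dualizing, hence a potentially smaller minimum), so its optimal value is a lower bound. The reverse inequality is the crux: by construction, the delayed constraint generation scheme described after Proposition~\ref{prop:separation} terminates only when the incumbent \((\hat{\bm\psi}_m, \hat{\phi}_m)\), optimal for the relaxation over \(\mathcal{A}^*_m\), violates \emph{none} of the full constraints~\eqref{cst:Q_min:lowerapprox}. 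Therefore \((\hat{\bm\psi}_m, \hat{\phi}_m)\) is feasible for the full problem~\eqref{model:Q_min} and achieves the reduced optimal value, so the full optimum is no larger. Combining the two bounds gives equality of~\eqref{model:Q_min} and~\eqref{model:Q_min-reduced}, and Proposition~\ref{prop:exp-rep} already equates~\eqref{model:Q_min} with the worst-case expectation.

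For the second movement, I would take the LP dual of the reduced problem~\eqref{model:Q_min-reduced} over the finite index set \(\mathcal{A}^*_m\). Writing the constraints~\eqref{cst:Q_min-reduced:lowerapprox} with multipliers \(\lambda^h \geq 0\), the dual maximizes \(\sum_h \lambda^h \mathcal{Q}_m(\mathbf{x}_n, \mathbf{a}^h)\) subject to the dual-feasibility conditions arising from the primal variables \(\phi_m\) and \(\bm\psi_m\). The stationarity condition for the free variable \(\phi_m\) yields \(\sum_h \lambda^h = 1\), so the \(\lambda^h\) form a probability vector; this is precisely what lets us interpret \(\mathbb{P}^*\{\tilde{\mathbf{a}}_m = \mathbf{a}^h\} = \lambda^h\) as a genuine distribution supported on \(\mathcal{A}^*_m \subseteq \mathcal{A}_m\). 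The condition for the nonnegative variables \(\bm\psi_m \geq 0\) gives \(\sum_h \lambda^h \mathbf{a}^h \leq \beta_m \mathbf{x}_n + \gamma_m\), which is exactly \(\mathbb{E}_{\mathbb{P}^*}[\tilde{\mathbf{a}}_m] \leq \beta_m \mathbf{x}_n + \gamma_m\), certifying \(\mathbb{P}^* \in \mathcal{P}_m(\mathbf{x}_n)\). Finally, strong LP duality (valid since the reduced problem is a finite-dimensional LP with finite optimal value) equates the dual objective \(\sum_h \lambda^h \mathcal{Q}_m(\mathbf{x}_n, \mathbf{a}^h) = \mathbb{E}_{\mathbb{P}^*}[\mathcal{Q}_m(\mathbf{x}_n, \tilde{\mathbf{a}}_m)]\) with the reduced primal optimum, which by the first movement equals the worst-case expectation.

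I anticipate the main obstacle to be the justification that termination of the constraint-generation procedure on \(\mathcal{A}^*_m\) certifies feasibility for the \emph{full} exponential family~\eqref{cst:Q_min:lowerapprox}. This hinges on Proposition~\ref{prop:separation}: the separation oracle (Algorithm~\ref{alg:separation}) is exact, meaning the \(\mathbf{a}^h\) it returns solves~\eqref{eq:MaxCut} and hence detects a violated constraint whenever one exists. I would need to state carefully that \(\mathcal{A}^*_m\) is defined as the collection accumulated up to the point where the oracle reports no violation, so that the certificate \(\hat{\phi}_m \geq \mathcal{Q}_m(\mathbf{x}_n, \mathbf{a}^h) - (\mathbf{a}^h)^\top \hat{\bm\psi}_m\) for the maximizer \(\mathbf{a}^h\) over \(\mathcal{A}_m\) propagates to \emph{all} \(\mathbf{a}_m \in \mathcal{A}_m\). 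A secondary subtlety is confirming that \(\mathbb{P}^*\) is supported on the correct set: since each \(\mathbf{a}^h \in \mathcal{A}^*_m \subseteq \mathcal{A}_m\) and \(\mathcal{P}(\mathcal{A}_m)\) is the set of \emph{all} distributions on \(\mathcal{A}_m\), the support requirement is automatic, leaving only the moment constraint to verify — which the dual-feasibility argument above supplies.
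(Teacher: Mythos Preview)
Your proposal is correct and follows essentially the same two-step structure as the paper's proof: first invoking the termination criterion of delayed constraint generation to equate~\eqref{model:Q_min} and~\eqref{model:Q_min-reduced} (the paper compresses this to ``by construction''), then taking the LP dual of the reduced problem to read off \(\sum_h \lambda^h = 1\), \(\sum_h \lambda^h \mathbf{a}^h \leq \beta_m \mathbf{x}_n + \gamma_m\), and the objective identity. The only wrinkle is your parenthetical ``smaller feasible region after dualizing,'' which is muddled---the direct argument is simply that fewer constraints enlarge the primal feasible region and hence lower the minimum---but your conclusion and the rest of the argument are sound.
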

\section{A Stochastic Nested Decomposition (SND) Algorithm}
\label{sec:DDR-SND}
We adapt the SND algorithm to solve the multi-stage DRO model with DDU. We present the SND algorithm in Section~\ref{subsec:SND}. Furthermore, we provide various computing enhancement strategies, including deterministic upper bounds and faster generation of Lagrangian cuts in Section~\ref{subsec:UB}. 
\subsection{Algorithm design}
\label{subsec:SND}
We adapt the SND algorithm for multi-stage stochastic integer program~\citep{zou2019stochastic} to our DRO model. Our Algorithm~\ref{alg:SDN} maintains and iteratively refines a lower approximation \(\underline{\mathcal{Q}}_n(\mathbf{x}_{p(n)}, \tilde{\mathbf{a}}_n)\) of the cost-to-go function \(\mathcal{Q}_n(\mathbf{x}_{p(n)}, \tilde{\mathbf{a}}_n)\) through cutting planes. A main difference between~\cite{zou2019stochastic} and Algorithm~\ref{alg:SDN}, however, is that a part of our system state \(\tilde{\mathbf{a}}_n\) is DDU. Consequently, the transition of system state 
is \emph{decision-dependent}. Fortunately, we derived a worst-case distribution of \(\tilde{\mathbf{a}}_n\) in Proposition~\ref{prop:worst-dist} and can then sample from it for state transition.
\begin{algorithm}
\small\caption{SND algorithm for the DRO model with DDU}\label{alg:SDN}
    \begin{algorithmic}[1]
    \State Initialization: LB $\gets -\infty$, $i \gets 1$;
    \While{(Stopping Criterion Not Satisfied)}
    \State Sample $M$ scenarios $\Omega^{i}=\{\omega^{i}_{1},\dots,\omega^{i}_{M}\}$;
    \State \textbf{[Forward pass]}
    \For{$k=1,\dots,M$}
        \For{$n \in \omega^{i}_{k}$}
            \State Solve formulation~\eqref{model:Q_n} with $\mathcal{Q}_{m}(\cdot, \cdot)$ replaced by $\underline{\mathcal{Q}}_{m}(\cdot, \cdot)$ and store solution $(\hat{\mathbf{x}}_{n}^{i},\hat{\mathbf{y}}_{n}^{i})$;
            \State Sample $\hat{\mathbf{a}}_{m}^i$ from the worst-case distribution $\mathbb{P}^{*}$;
            \EndFor
        \EndFor

        \State \textbf{[Backward pass]}
    \For{$t = T-1,\dots,1$}
        \For{$n \in \mathcal{S}_{t}$} \label{alg-snd-note-1}
            \If{$n \in \omega^{i}_{k}$ for some $k \in \{1,\dots,M\}$}
                \For{$m \in \mathcal{C}(n)$}
                    \State Add cuts to strengthen the lower approximation $\underline{\mathcal{Q}}_{m}(\cdot, \cdot)$;
                \EndFor
            \EndIf
        \EndFor \label{alg-snd-note-2}
    \EndFor
    \State \textbf{[Lower bound]}
    \State Evaluate \(\mathcal{Q}_1(\mathbf{x}_0, \mathbf{1})\) using formulation~\eqref{model:Q_n} with \(n=1\) and $\mathcal{Q}_{m}(\cdot, \cdot)$ replaced by $\underline{\mathcal{Q}}_{m}(\cdot, \cdot)$;
    \State LB \(\gets \mathcal{Q}_1(\mathbf{x}_0, \mathbf{1})\) and \(i \gets i+1\);
    \EndWhile
    \end{algorithmic}
\end{algorithm}

The deterministic representation of formulation~\eqref{model:Q_n}, by Proposition~\ref{prop:exp-rep}, is a bilinear program. Although commercial solvers (e.g., GUROBI) can solve bilinear programs directly, they solve MILPs more efficiently by far. In addition, lower approximations of \(\mathcal{Q}(\mathbf{x}_n, \tilde{\mathbf{a}}_m)\) by cutting planes are computationally easier to obtain with state variables \(\mathbf{x}_n\) being binary, as opposed to being continuous or mixed-integer. As a result, in this paper we approximate all continuous state variables (power generation \(p_{gn}\) and power flow \(f_{ln}\)) using binary expansions~\citep{owen2002value}. 

Specifically, for each continuous state variable \(Z \in [L, U]\), we define auxiliary binary variables \(\{z_e \in \{0, 1\}, e \in [E]\}\) and approximate 
\begin{equation*}
    Z \approx L + s \sum_{e = 1}^{E} 2^{e-1} z_{e},
\end{equation*}
where $s$ is a pre-specified approximation precision (e.g., $s = 10^{-1},10^{-2}$) and $E:=\big\lfloor \log_{2}\left(\frac{U-L}{s}\right)\big\rfloor+1$. Under this approximation, \(\mathbf{x}_n\) becomes (purely) binary variables. Consequently, the bilinear term \(\bm{\psi}_{m}^{\top} \beta_{m}\mathbf{x}_{n}\) in the deterministic representation of formulation~\eqref{model:Q_n} can be linearized using standard McCormick inequalities and we can solve~\eqref{model:Q_n} as a MILP efficiently. In Theorem~\ref{thm:expansion} of Appendix~\ref{apdx:thm-expansion}, we show that the approximation error, in terms of the optimal value of our DRO model by applying the binary expansion, is \emph{linear} in precision \(s\). Consequently, for the rest of this section, we shall focus on solving the binary expansion approximation using a SND algorithm.

Throughout the algorithm, $\underline{\mathcal{Q}}_{n}(\mathbf{x}_{n},\tilde{\mathbf{a}}_{m})$ is characterized by the pointwise maximum of affine functions (see, e.g.,~\eqref{Q-rep}) and we iteratively update $\underline{\mathcal{Q}}_{n}(\mathbf{x}_{p(n)},\tilde{\mathbf{a}}_{n})$ by adding new cuts in the form
\begin{equation}
\underline{\mathcal{Q}}_{n}(\mathbf{x}_{p(n)},\tilde{\mathbf{a}}_{n}) \geq \pi^{\top}\mathbf{x}_{p(n)} + \tau^{\top}\tilde{\mathbf{a}}_n + \omega \label{eq:cut}
\end{equation}
to strengthen the lower approximation. In this paper, we follow~\cite{zou2019stochastic} and consider the following three families of cuts. To describe them, we rewrite~\eqref{model:Q_n} with respect to \(\underline{\mathcal{Q}}_{n}(\mathbf{x}_{p(n)},\Tilde{\mathbf{a}}_{n})\) as 
\begin{subequations}
\label{model:Q-underline}
\begin{align}
\underline{\mathcal{Q}}_{n}(\mathbf{x}_{p(n)},\Tilde{\mathbf{a}}_{n}) \ := \ \min_{\substack{\mathbf{x}_{n},\mathbf{y}_{n},\\ \mathbf{r}_{n}, \mathbf{w}_n: \text{\eqref{model:Q_n-con}}}} \quad & f_{n}(\mathbf{x}_{n},\mathbf{y}_{n},\mathbf{w}_{n}) + \sum_{m \in \mathcal{C}(n)} p_{nm} \sup_{\mathbb{P} \in \mathcal{P}_m(\mathbf{x}_n)} \mathbb{E}_{\mathbb{P}}\left[ \underline{\mathcal{Q}}_{m}(\mathbf{x}_{n},\Tilde{\mathbf{a}}_{m})\right] \nonumber\\
\text{s.t.} \quad & \ \mathbf{r}_{n} = \mathbf{x}_{p(n)}, \label{eq:fishing-x} \\
& \ \mathbf{w}_n = \tilde{\mathbf{a}}_n, \label{eq:fishing-a} \\
& \ \mathbf{r}_n \in \{0,1\}^{\text{dim}(\mathbf{x}_{p(n)})},\mathbf{w}_{n} \in \{0,1\}^{\text{dim}(\tilde{\mathbf{a}}_n)}, \label{binary-con}
\end{align}
\end{subequations}
where constraints~\eqref{eq:fishing-x}--\eqref{eq:fishing-a} make copies of the state variables \((\mathbf{x}_{p(n)}, \tilde{\mathbf{a}}_n)\). As both \(\mathbf{x}_{p(n)}\) and \(\tilde{\mathbf{a}}_n\) are binary and \(\underline{\mathcal{Q}}_{m}(\mathbf{x}_{n},\Tilde{\mathbf{a}}_{m})\) admits a piecewise linear representation as in~\eqref{Q-rep}, formulation~\eqref{model:Q-underline} is a mixed-binary linear program and strong duality holds when we relax constraints~\eqref{eq:fishing-x}--\eqref{eq:fishing-x} in the Lagrangian manner. More specifically, \(\underline{\mathcal{Q}}_{n}(\mathbf{x}_{p(n)},\Tilde{\mathbf{a}}_{n})\) equals the optimal value of the following (max-min) Lagrangian relaxation:
\begin{align}
& \max_{\pi, \tau}\min_{\substack{\mathbf{x}_n, \mathbf{y}_n,\\ \mathbf{r}_n,\mathbf{w}_n: \text{\eqref{model:Q_n-con},\eqref{binary-con}}}} \ f_{n}(\mathbf{x}_{n},\mathbf{y}_{n},\mathbf{w}_{n}) + \sum_{m \in \mathcal{C}(n)} p_{nm} \sup_{\mathbb{P} \in \mathcal{P}_m(\mathbf{x}_n)} \mathbb{E}_{\mathbb{P}}\left[ \underline{\mathcal{Q}}_{m}(\mathbf{x}_{n},\Tilde{\mathbf{a}}_{m})\right] - \pi^{\top}(\mathbf{r}_{n} - \mathbf{x}_{p(n)}) \nonumber \\
& \hspace{8em} - \tau^{\top}(\mathbf{w}_{n} - \tilde{\mathbf{a}}_n) \nonumber\\
= \ & \max_{\pi, \tau} \ \mathcal{R}_{\mathbf{x}_{p(n)}, \tilde{\mathbf{a}}_n}(\pi, \tau) \ := \ \Big\{\pi^{\top} \mathbf{x}_{p(n)} + \tau^{\top}\tilde{\mathbf{a}}_n + \mathcal{L}_n(\pi, \tau)\Big\}, \label{eq:Lag-max-min}
\end{align}
where \((\pi, \tau)\) are dual variables associated with~\eqref{eq:fishing-x}--\eqref{eq:fishing-a}, respectively, and
\begin{equation}\label{lag-func}
\resizebox{0.9\textwidth}{!}{
\(\displaystyle
\mathcal{L}_n(\pi, \tau) := \min_{\substack{\mathbf{x}_n, \mathbf{y}_n, \\\mathbf{r}_n,\mathbf{w}_n: \text{\eqref{model:Q_n-con}, \eqref{binary-con}}}} \ f_{n}(\mathbf{x}_{n},\mathbf{y}_{n},\mathbf{w}_{n}) + \sum_{m \in \mathcal{C}(n)} p_{nm} \sup_{\mathbb{P} \in \mathcal{P}_m(\mathbf{x}_n)} \mathbb{E}_{\mathbb{P}}\left[ \underline{\mathcal{Q}}_{m}(\mathbf{x}_{n},\Tilde{\mathbf{a}}_{m})\right] - \pi^{\top}\mathbf{r}_{n} - \tau^{\top}\mathbf{w}_{n}
\)}.
\end{equation}
As~\eqref{eq:Lag-max-min} is an unconstrained convex program for \((\pi, \tau)\), we can solve~\eqref{eq:Lag-max-min} by using, e.g., the subgradient algorithm. The strong duality of~\eqref{eq:Lag-max-min} gives rise to the validity and tightness of Lagrangian cuts.
\begin{proposition}[\textbf{Lagrangian Cuts}]
\label{prop:lag-cut}
At any binary $(\hat{\mathbf{x}}_{p(n)}, \hat{\mathbf{a}}_n)$, inequality~\eqref{eq:cut} holds with
\begin{align*}
(\pi, \tau) &\in \argmax_{\pi, \tau} \ \mathcal{R}_{\hat{\mathbf{x}}_{p(n)}, \hat{\mathbf{a}}_n}(\pi, \tau), \
\omega = \mathcal{L}_n(\pi, \tau).
\end{align*}
In addition, the cut is tight in the sense that
\begin{equation}
\underline{\mathcal{Q}}_{n}(\hat{\mathbf{x}}_{p(n)},\hat{\mathbf{a}}_{n}) =\pi^{\top}\hat{\mathbf{x}}_{p(n)} + \tau^{\top}\hat{\mathbf{a}}_n + \omega. \label{eq:tight-cut}
\end{equation}
\end{proposition}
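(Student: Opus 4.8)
The plan is to derive both claims directly from the max-min strong duality identity~\eqref{eq:Lag-max-min}, after recording one structural observation: the inner Lagrangian value $\mathcal{L}_n(\pi,\tau)$ in~\eqref{lag-func} does not depend on the incoming state $(\mathbf{x}_{p(n)},\tilde{\mathbf{a}}_n)$. Indeed, the state enters formulation~\eqref{model:Q-underline} only through the copy (fishing) constraints~\eqref{eq:fishing-x}--\eqref{eq:fishing-a}, and these are precisely the constraints being dualized; the local constraints and objective see only the copies $(\mathbf{r}_n,\mathbf{w}_n)$. Consequently, for each fixed multiplier $(\pi,\tau)$ the dual objective $\mathcal{R}_{\mathbf{x}_{p(n)},\tilde{\mathbf{a}}_n}(\pi,\tau)=\pi^\top\mathbf{x}_{p(n)}+\tau^\top\tilde{\mathbf{a}}_n+\mathcal{L}_n(\pi,\tau)$ is an affine function of the state, having exactly the shape of the candidate cut~\eqref{eq:cut} with $\omega=\mathcal{L}_n(\pi,\tau)$.

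For validity I would invoke weak Lagrangian duality. Relaxing the equality constraints~\eqref{eq:fishing-x}--\eqref{eq:fishing-a} can only lower the optimal value of the minimization~\eqref{model:Q-underline}, so for every state $(\mathbf{x}_{p(n)},\tilde{\mathbf{a}}_n)$ and every $(\pi,\tau)$ we have $\mathcal{R}_{\mathbf{x}_{p(n)},\tilde{\mathbf{a}}_n}(\pi,\tau)\le\underline{\mathcal{Q}}_n(\mathbf{x}_{p(n)},\tilde{\mathbf{a}}_n)$. Because $\mathcal{L}_n(\pi,\tau)$ is one and the same constant for all states, substituting $\omega=\mathcal{L}_n(\pi,\tau)$ turns this family of inequalities into the globally valid affine minorant~\eqref{eq:cut}. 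The point to stress here is that validity holds at every $(\mathbf{x}_{p(n)},\tilde{\mathbf{a}}_n)$ and for any choice of $(\pi,\tau)$, not only at the point at which the dual was solved; state-independence of $\mathcal{L}_n$ is exactly what upgrades a pointwise bound into a cut valid everywhere.

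For tightness I would use the strong duality asserted for~\eqref{eq:Lag-max-min}. Choosing $(\pi,\tau)\in\argmax_{\pi,\tau}\mathcal{R}_{\hat{\mathbf{x}}_{p(n)},\hat{\mathbf{a}}_n}(\pi,\tau)$ and setting $\omega=\mathcal{L}_n(\pi,\tau)$, the identity~\eqref{eq:Lag-max-min} evaluated at $(\hat{\mathbf{x}}_{p(n)},\hat{\mathbf{a}}_n)$ gives $\underline{\mathcal{Q}}_n(\hat{\mathbf{x}}_{p(n)},\hat{\mathbf{a}}_n)=\mathcal{R}_{\hat{\mathbf{x}}_{p(n)},\hat{\mathbf{a}}_n}(\pi,\tau)=\pi^\top\hat{\mathbf{x}}_{p(n)}+\tau^\top\hat{\mathbf{a}}_n+\omega$, which is precisely~\eqref{eq:tight-cut}.

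The only substantive ingredient is the zero-gap (strong duality) claim underlying tightness, and this is where I expect the real work to sit. I would justify it by the standard argument for binary coupling variables: after the binary expansion the copies $(\mathbf{r}_n,\mathbf{w}_n)$ are pure binary, and the cost-to-go, viewed as a function of the binary incoming state, coincides with its convex lower envelope at every binary point, since a binary vector is an extreme point of $[0,1]^{d}$ and admits no nontrivial convex decomposition, while the Lagrangian dual computes exactly this envelope; hence there is no duality gap at $(\hat{\mathbf{x}}_{p(n)},\hat{\mathbf{a}}_n)$. Since this strong duality is already stated in the text preceding the proposition, I would cite it and keep the proof itself short, taking care only to separate cleanly the weak-duality argument (yielding global validity) from the strong-duality argument (yielding tightness at the binary anchor point).
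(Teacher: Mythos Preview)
Your proposal is correct and matches the paper's own justification: the paper does not give a standalone proof of this proposition, but immediately before stating it remarks that ``the strong duality of~\eqref{eq:Lag-max-min} gives rise to the validity and tightness of Lagrangian cuts,'' which is exactly the mechanism you unpack (state-independence of $\mathcal{L}_n$ plus weak duality for global validity, strong duality at the binary anchor for tightness). Your write-up is in fact more explicit than the paper's, since you separate the weak- and strong-duality roles and spell out why binarity of the copies $(\mathbf{r}_n,\mathbf{w}_n)$ closes the gap; the paper simply asserts the strong duality and cites~\cite{zou2019stochastic} for the underlying result.
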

We note that formulation~\eqref{eq:Lag-max-min} puts no restrictions on \((\pi, \tau)\) and so any \((\pi, \tau)\) produces a valid inequality. A convenient (but not necessarily optimal) choice of \((\pi, \tau)\) is the dual optimal solutions of the linear programming relaxation of~\eqref{model:Q-underline}, producing the strengthened Benders' cuts.
\begin{proposition}{({\bf Strengthened Benders' Cuts})}
For any binary \((\hat{\mathbf{x}}_{p(n)}, \hat{\mathbf{a}}_n)\), let \((\pi, \tau)\) represent dual optimal solutions associated with constraints~\eqref{eq:fishing-x}--\eqref{eq:fishing-a} to formulation~\eqref{model:Q-underline} with \(\mathbf{x}_{p(n)} = \hat{\mathbf{x}}_{p(n)}\), \(\tilde{\mathbf{a}}_n = \hat{\mathbf{a}}_n\), and no binary restrictions~\eqref{binary-con}. Then, inequality~\eqref{eq:cut} holds with \(\omega = \mathcal{L}_n(\pi, \tau)\).
\end{proposition}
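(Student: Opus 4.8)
The plan is to show the cut is valid by invoking weak Lagrangian duality, which already guarantees validity for \emph{any} choice of multipliers $(\pi,\tau)$; the dual-optimal solution of the LP relaxation is then merely one admissible such choice. The key structural fact, noted right after Proposition~\ref{prop:lag-cut}, is that the outer maximization in~\eqref{eq:Lag-max-min} is unconstrained in $(\pi,\tau)$, so there is no feasibility condition to verify for the selected multipliers.

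First I would establish the one-sided inequality $\underline{\mathcal{Q}}_{n}(\mathbf{x}_{p(n)},\tilde{\mathbf{a}}_{n}) \geq \mathcal{R}_{\mathbf{x}_{p(n)},\tilde{\mathbf{a}}_n}(\pi,\tau)$ for every fixed $(\pi,\tau)$ and every $(\mathbf{x}_{p(n)},\tilde{\mathbf{a}}_n)$. This holds because, at any point feasible for~\eqref{model:Q-underline}, the copy constraints~\eqref{eq:fishing-x}--\eqref{eq:fishing-a} are satisfied with equality, so the penalty terms $-\pi^{\top}(\mathbf{r}_n-\mathbf{x}_{p(n)})-\tau^{\top}(\mathbf{w}_n-\tilde{\mathbf{a}}_n)$ vanish; hence relaxing~\eqref{eq:fishing-x}--\eqref{eq:fishing-a} into the objective and minimizing over the larger set that keeps only~\eqref{model:Q_n-con} and the binary restrictions~\eqref{binary-con}, which is exactly the problem defining $\mathcal{L}_n(\pi,\tau)$ in~\eqref{lag-func}, can only lower the optimal value. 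Unpacking $\mathcal{R}_{\mathbf{x}_{p(n)},\tilde{\mathbf{a}}_n}(\pi,\tau)=\pi^{\top}\mathbf{x}_{p(n)}+\tau^{\top}\tilde{\mathbf{a}}_n+\mathcal{L}_n(\pi,\tau)$ and setting $\omega=\mathcal{L}_n(\pi,\tau)$ turns this bound into precisely~\eqref{eq:cut}.

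It then remains to observe that the stated $(\pi,\tau)$ qualifies as one such multiplier. Since~\eqref{eq:fishing-x}--\eqref{eq:fishing-a} are equality constraints, their LP-relaxation dual variables are free, matching the unconstrained role of $(\pi,\tau)$ in~\eqref{eq:Lag-max-min}; the LP-relaxation dual optimum is therefore a particular point at which the previous weak-duality bound applies, and~\eqref{eq:cut} holds with $\omega=\mathcal{L}_n(\pi,\tau)$. I would emphasize that the intercept is recomputed through the \emph{integer} inner problem $\mathcal{L}_n$ rather than through the LP-relaxation objective value, which is exactly what ``strengthens'' the plain Benders' cut while preserving validity.

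Because the argument uses only the easy (weak) direction, the single point requiring genuine care — and the closest thing to an obstacle — is a well-posedness check: the LP relaxation of~\eqref{model:Q-underline} at the fixed $(\hat{\mathbf{x}}_{p(n)},\hat{\mathbf{a}}_n)$ must be feasible and bounded so that a dual-optimal $(\pi,\tau)$ exists, and $\mathcal{L}_n(\pi,\tau)$ must be finite at that choice. Both follow from the relatively-complete-recourse and boundedness assumptions already in force for~\eqref{model:Q-underline}, after which no strong-duality or integrality-gap reasoning is needed.
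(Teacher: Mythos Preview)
Your proposal is correct and follows essentially the same reasoning as the paper, which states inline (just before the proposition) that ``formulation~\eqref{eq:Lag-max-min} puts no restrictions on $(\pi,\tau)$ and so any $(\pi,\tau)$ produces a valid inequality,'' with the LP-relaxation duals being merely a convenient choice. The paper does not give a separate formal proof in the appendix, treating this observation as the entire argument; your write-up simply unpacks the weak-duality step in more detail.
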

A third class of cuts extend the integer optimality cuts for two-stage stochastic integer program~\citep{laporte1993integer}, using a global lower bound \(L_n\) of \(\mathcal{Q}_n(\mathbf{x}_{p(n)}, \tilde{\mathbf{a}}_n)\). In computation, \(L_n\) can be found, e.g., by solving the LP relaxation of formulation~\eqref{model:Q-underline}.
\begin{proposition}{({\bf Integer Optimality Cuts})} \label{prop:integer-cut}
For any binary \((\hat{\mathbf{x}}_{p(n)}, \hat{\mathbf{a}}_n)\) and lower bound \(L_n\) of \(\mathcal{Q}_n(\mathbf{x}_{p(n)}, \tilde{\mathbf{a}}_n)\), inequality~\eqref{eq:cut} holds with \(\pi = (\underline{\mathcal{Q}}_n(\hat{\mathbf{x}}_{p(n)}, \hat{\mathbf{a}}_n) - L_n)(2\hat{\mathbf{x}}_{p(n)} - \mathbf{1})\), \(\tau = (\underline{\mathcal{Q}}_n(\hat{\mathbf{x}}_{p(n)}, \hat{\mathbf{a}}_n) - L_n)(2\hat{\mathbf{a}}_n - \mathbf{1})\), and \(\omega = \underline{\mathcal{Q}}_n(\hat{\mathbf{x}}_{p(n)}, \hat{\mathbf{a}}_n) - (\underline{\mathcal{Q}}_n(\hat{\mathbf{x}}_{p(n)}, \hat{\mathbf{a}}_n) - L_n) (\mathbf{1}^{\top}\hat{\mathbf{x}}_{p(n)} + \mathbf{1}^{\top}\hat{\mathbf{a}}_n)\).
\end{proposition}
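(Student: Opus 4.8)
The plan is to show that the affine function appearing on the right-hand side of~\eqref{eq:cut}, call it $g(\mathbf{x}_{p(n)}, \tilde{\mathbf{a}}_n) := \pi^{\top}\mathbf{x}_{p(n)} + \tau^{\top}\tilde{\mathbf{a}}_n + \omega$ for the prescribed $(\pi,\tau,\omega)$, lies weakly below $\underline{\mathcal{Q}}_n$ at every binary state. Following the classical integer optimality cut argument of~\cite{laporte1993integer}, I would verify two facts: (i) $g$ reproduces $\underline{\mathcal{Q}}_n$ exactly at the evaluation point $(\hat{\mathbf{x}}_{p(n)}, \hat{\mathbf{a}}_n)$; and (ii) $g$ drops to at most $L_n$ at every other binary point, and $L_n$ itself lower-bounds $\underline{\mathcal{Q}}_n$. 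Together these yield $g \leq \underline{\mathcal{Q}}_n$ on the whole binary domain, which is exactly inequality~\eqref{eq:cut}.

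The algebraic core is a Hamming-distance identity. Concatenating the binary state into $\mathbf{u} := (\mathbf{x}_{p(n)}, \tilde{\mathbf{a}}_n)$ and $\hat{\mathbf{u}} := (\hat{\mathbf{x}}_{p(n)}, \hat{\mathbf{a}}_n)$, and writing $\delta := \underline{\mathcal{Q}}_n(\hat{\mathbf{x}}_{p(n)}, \hat{\mathbf{a}}_n) - L_n$, the prescribed coefficients collapse to
\begin{equation*}
g(\mathbf{u}) = \delta\,(2\hat{\mathbf{u}} - \mathbf{1})^{\top}\mathbf{u} + \underline{\mathcal{Q}}_n(\hat{\mathbf{x}}_{p(n)}, \hat{\mathbf{a}}_n) - \delta\,\mathbf{1}^{\top}\hat{\mathbf{u}}.
\end{equation*}
I would then invoke $\hat{u}_j \in \{0,1\}$ (so $\hat{u}_j^2 = \hat{u}_j$) to establish the key identity $(2\hat{\mathbf{u}} - \mathbf{1})^{\top}\mathbf{u} - \mathbf{1}^{\top}\hat{\mathbf{u}} = -\|\mathbf{u} - \hat{\mathbf{u}}\|_1$ for every binary $\mathbf{u}$: the $j$-th term $(2\hat{u}_j - 1)u_j - \hat{u}_j$ equals $-1$ exactly when $u_j \neq \hat{u}_j$ and $0$ otherwise. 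Substituting gives the compact form
\begin{equation*}
g(\mathbf{u}) = \underline{\mathcal{Q}}_n(\hat{\mathbf{x}}_{p(n)}, \hat{\mathbf{a}}_n) - \delta\,\|\mathbf{u} - \hat{\mathbf{u}}\|_1 .
\end{equation*}

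With this identity the conclusion is immediate. At $\mathbf{u} = \hat{\mathbf{u}}$ the Hamming distance vanishes, so $g(\hat{\mathbf{u}}) = \underline{\mathcal{Q}}_n(\hat{\mathbf{x}}_{p(n)}, \hat{\mathbf{a}}_n)$, giving equality in~\eqref{eq:cut} (and hence tightness of the cut at the sampled point). For any other binary $\mathbf{u}$ we have $\|\mathbf{u} - \hat{\mathbf{u}}\|_1 \geq 1$ and $\delta \geq 0$, so $g(\mathbf{u}) \leq \underline{\mathcal{Q}}_n(\hat{\mathbf{x}}_{p(n)}, \hat{\mathbf{a}}_n) - \delta = L_n$. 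Combining with the lower-bound property $L_n \leq \underline{\mathcal{Q}}_n(\mathbf{u})$ establishes $g(\mathbf{u}) \leq \underline{\mathcal{Q}}_n(\mathbf{u})$, completing the proof.

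The one point demanding care — the main obstacle — is the status of $L_n$: the literal inequality~\eqref{eq:cut} requires $L_n$ to lower-bound $\underline{\mathcal{Q}}_n$ (not merely the true cost-to-go $\mathcal{Q}_n$) uniformly over the binary domain, and it is also what guarantees $\delta \geq 0$. This is exactly what the ``global lower bound'' hypothesis supplies, and it explains why the text suggests obtaining $L_n$ from the LP relaxation of~\eqref{model:Q-underline}: such a relaxation value bounds $\underline{\mathcal{Q}}_n$ from below uniformly, and since $\underline{\mathcal{Q}}_n \leq \mathcal{Q}_n$ it bounds the true cost-to-go as well. Everything else reduces to the binary bookkeeping above; notably, no convexity or duality is invoked here — only the integrality of the state variables.
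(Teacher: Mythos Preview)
Your proposal is correct and follows essentially the same route as the paper's proof: both arguments reduce the prescribed affine cut to the Hamming-distance form $\underline{\mathcal{Q}}_n(\hat{\mathbf{x}}_{p(n)}, \hat{\mathbf{a}}_n) - \delta\,\|\mathbf{u} - \hat{\mathbf{u}}\|_1$ via the binary identity $(2\hat{\mathbf{u}}-\mathbf{1})^{\top}\mathbf{u} - \mathbf{1}^{\top}\hat{\mathbf{u}} = -\|\mathbf{u}-\hat{\mathbf{u}}\|_1$, and then use the lower-bound property of $L_n$. Your write-up is in fact more explicit than the paper's (which simply declares the Hamming-distance inequality ``clear'' and then rewrites the $1$-norm linearly), and your closing remark about needing $L_n \leq \underline{\mathcal{Q}}_n$ rather than merely $L_n \leq \mathcal{Q}_n$ correctly identifies a subtlety the paper's statement elides but its suggested construction (LP relaxation of~\eqref{model:Q-underline}) resolves.
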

We close this section by confirming that Algorithm~\ref{alg:SDN} converges finitely to the global optimum of the DRO model \(\mathcal{Q}_1(\mathbf{x}_0, \mathbf{1})\), which follows from Theorem 2 of~\cite{zou2019stochastic} and the tightness of the Lagrangian cuts (see Proposition~\ref{prop:lag-cut}) and the integer optimality cuts (see Proposition~\ref{prop:integer-cut-tight}).
\begin{theorem}
Suppose that Algorithm~\ref{alg:SDN} adopts either Lagrangian cuts or integer optimality cuts to strengthen \(\underline{\mathcal{Q}}_m(\cdot, \cdot)\), then with probability one the forward pass solutions \(\{(\hat{\mathbf{x}}^i_n, \hat{\mathbf{y}}^i_n): n \in \mathcal{T}\}\) converge in a finite number of iterations to global optimal solutions to \(\mathcal{Q}_1(\mathbf{x}_0, \mathbf{1})\).
\end{theorem}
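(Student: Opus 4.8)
The plan is to establish finite convergence by reducing our setting to the framework of Theorem 2 in~\cite{zou2019stochastic}, then supplying the two ingredients that the reduction requires: a piecewise-linear lower approximation architecture with finitely many distinct cut coefficients, and tightness of the cuts at the sampled trial points. First I would argue that the binary expansion renders all inter-stage state variables $\mathbf{x}_n$ purely binary, and together with the binary failure states $\tilde{\mathbf{a}}_n \in \{0,1\}^{|\mathcal{L}|}$, the joint state space $(\mathbf{x}_{p(n)}, \tilde{\mathbf{a}}_n)$ at each node is a finite set of lattice points. This finiteness is the crux: since each Lagrangian (or integer optimality) cut is exact at the trial point by Proposition~\ref{prop:lag-cut} (resp.\ Proposition~\ref{prop:integer-cut-tight}), and the lower approximation $\underline{\mathcal{Q}}_n$ is a valid underestimator everywhere, once a cut has been generated at a given binary point that point's value is captured exactly and will never be revisited in a way that improves the approximation. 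Hence only finitely many \emph{useful} cuts can ever be added at each node.

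Next I would handle the decision-dependent and distributionally robust features, which is where our argument departs from~\cite{zou2019stochastic}. The key observation is that Proposition~\ref{prop:worst-dist} reduces the inner worst-case expectation to an expectation under a concrete worst-case distribution $\mathbb{P}^*$ supported on the finite set $\mathcal{A}^*_m \subseteq \mathcal{A}_m$. I would invoke Proposition~\ref{prop:exp-rep} to rewrite the worst-case expectation as the finite-dimensional minimization~\eqref{model:Q_min}, so that formulation~\eqref{model:Q-underline} becomes a genuine mixed-binary linear program (after McCormick linearization of the bilinear term $\bm{\psi}_m^\top \beta_m \mathbf{x}_n$, as described in the text). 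With this reformulation, strong duality holds upon Lagrangian relaxation of the copy constraints~\eqref{eq:fishing-x}--\eqref{eq:fishing-a}, which is exactly what powers the tightness relation~\eqref{eq:tight-cut}. I would emphasize that the sampling in the forward pass draws $\hat{\mathbf{a}}_m^i$ from $\mathbb{P}^*$, so the state-transition mechanism, though decision-dependent, still visits each reachable binary state with positive probability; this is what makes the ``with probability one'' conclusion go through via the Borel--Cantelli-type argument underlying the original proof.

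The remaining steps are to stitch these pieces into the convergence conclusion. I would argue that, because the state space is finite and each added cut is tight and valid, after finitely many iterations the lower approximation $\underline{\mathcal{Q}}_n$ coincides with the true cost-to-go $\mathcal{Q}_n$ on the (finite) set of states actually visited along sampled scenarios, with probability one; consequently the forward-pass solutions become optimal for the true recursion~\eqref{model:Q_n}. The lower bound $\mathrm{LB}$ computed at the root then equals $\mathcal{Q}_1(\mathbf{x}_0, \mathbf{1})$, and the upper bound (available from the deterministic construction in Section~\ref{subsec:UB}) matches it, closing the optimality gap.

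The hard part will be verifying that the probability-one, finite-termination argument of~\cite{zou2019stochastic} remains valid under decision-dependent state transitions. In the original SDDiP/SND setting the scenario tree and its branching probabilities are fixed exogenously, whereas here the distribution $\mathbb{P}^*$ of $\tilde{\mathbf{a}}_m$ depends on the incumbent $\mathbf{x}_n$ and is itself re-derived each iteration via Proposition~\ref{prop:worst-dist}. I would need to confirm that, despite this dependence, the set of binary states reachable along any scenario is still finite and that every such state retains a uniformly positive probability of being sampled infinitely often, so that the sampling eventually exercises all relevant state transitions; this finiteness plus positivity is the lynchpin that lets the original Borel--Cantelli sampling argument carry over verbatim, and it is the one place where a careful, self-contained justification is warranted rather than a direct appeal to the cited theorem.
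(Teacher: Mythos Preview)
Your proposal is correct and takes essentially the same approach as the paper: the paper's entire justification is the sentence preceding the theorem, which invokes Theorem~2 of \cite{zou2019stochastic} together with the tightness of Lagrangian and integer optimality cuts (Propositions~\ref{prop:lag-cut} and~\ref{prop:integer-cut-tight}). Your write-up is in fact more detailed than the paper's, and the concern you flag at the end---whether the decision-dependent sampling of $\tilde{\mathbf{a}}_m$ from $\mathbb{P}^*$ preserves the probability-one argument---is not explicitly addressed in the paper either.
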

\subsection{Computational strengthening strategies}
\label{subsec:UB}
We propose the following three strategies to strengthen Algorithm~\ref{alg:SDN}.
\subsubsection{Deterministic upper bounds}
In Algorithm~\ref{alg:SDN}, a stopping criterion can be a threshold on either the number of iterations or the improvement in LB. However, neither criterion certifies global optimality (or quantifies suboptimality) of solving the DRO model. As an alternative, we propose a deterministic upper bound UB for \(\mathcal{Q}_1(\mathbf{x}_0, \mathbf{1})\). Then, we can terminate Algorithm~\ref{alg:SDN} when LB and UB are sufficiently close. To this end, similar to the lower approximations \(\underline{\mathcal{Q}}_{n}(\mathbf{x}_{p(n)},\Tilde{\mathbf{a}}_{n})\), we store an upper approximation \(\overline{\mathcal{Q}}_{n}(\mathbf{x}_{p(n)},\Tilde{\mathbf{a}}_{n})\) for each $\mathcal{Q}_{n}(\mathbf{x}_{p(n)},\Tilde{\mathbf{a}}_{n})$ and update it through
\begin{equation}
\overline{\mathcal{Q}}_{n}(\mathbf{x}_{p(n)},\Tilde{\mathbf{a}}_{n}) \ := \ \min_{\mathbf{x}_{n},\mathbf{y}_{n}: \text{\eqref{model:Q_n-con}}} \quad f_{n}(\mathbf{x}_{n},\mathbf{y}_{n},\Tilde{\mathbf{a}}_{n}) + \sum_{m \in \mathcal{C}(n)} p_{nm} \sup_{\mathbb{P} \in \mathcal{P}_m(\mathbf{x}_n)} \mathbb{E}_{\mathbb{P}}\left[ \overline{\mathcal{Q}}_{m}(\mathbf{x}_{n},\Tilde{\mathbf{a}}_{m})\right], \label{Q-overline}
\end{equation}
for all \(n \in \mathcal{T}\) and accordingly \(\text{UB} = \overline{\mathcal{Q}}_1(\mathbf{x}_0, \mathbf{1})\). We store each \(\overline{\mathcal{Q}}_{n}(\cdot, \cdot)\) as a boolean function: for the solutions \((\hat{\mathbf{x}}^i_{p(n)}, \hat{\mathbf{a}}^i_n)\) ever explored in forward passes of Algorithm~\ref{alg:SDN}, where \(i \in [I]\) is the iteration index and \(I\) is the total number of iterations by far, \(\overline{\mathcal{Q}}_{n}(\hat{\mathbf{x}}^i_{p(n)}, \hat{\mathbf{a}}^i_n)\) equals the latest optimal value \(\hat{u}^i_n\) of the above formulation; and otherwise it equals a large constant \(\mathcal{M} \geq \max\limits_{\mathbf{x}_{p(n)}, \tilde{\mathbf{a}}_n}\mathcal{Q}_n(\mathbf{x}_{p(n)}, \tilde{\mathbf{a}}_n)\) (we discuss the initialization of \(\mathcal{M}\) in Appendix~\ref{apdx:UpperBound}). We represent this function using an auxiliary binary variable \(\delta^i_{p(n)}\), which equals \(1\) if and only if \(\mathbf{x}_{p(n)} = \hat{\mathbf{x}}^i_{p(n)}\) for any \(i \in [I]\), or otherwise \(\delta^{I+1}_{p(n)} = 1\) if \(\mathbf{x}_{p(n)}\) differs from all \(\hat{\mathbf{x}}^i_{p(n)}\). Likewise, we define an auxiliary binary variable \(\gamma^i_n\), which equals \(1\) if and only if \(\tilde{\mathbf{a}}_{n} = \hat{\mathbf{a}}^i_{n}\), or otherwise \(\gamma^{I+1}_n = 1\). This gives rise to the following reformulation by Proposition~\ref{prop:exp-rep}: 
\begin{align*}
\sup_{\mathbb{P} \in \mathcal{P}_m(\mathbf{x}_n)} \mathbb{E}_{\mathbb{P}}\left[ \overline{\mathcal{Q}}_{m}(\mathbf{x}_{n},\Tilde{\mathbf{a}}_{m})\right] \ = \ \min_{\bm{\psi}_{m} \geq 0,\phi_{m}} \quad & \bm{\psi}_{m}^{\top} \beta_{m}\mathbf{x}_{n} + \bm{\psi}_{m}^{\top}\gamma_{m} + \phi_{m} \nonumber \\
         \text{s.t.}\quad & \phi_{m} \geq \sum_{i=1}^I \delta^i_n \gamma^i_m \hat{u}^i_m + \mathcal{M} (\delta^{I+1}_n + \gamma^{I+1}_m - \delta^{I+1}_n \gamma^{I+1}_m) \\
         & - \mathbf{a}_{m}^{\top} \bm{\psi}_{m}, \quad \forall (\mathbf{a}_{m}, \gamma_m) \in \overline{\mathcal{A}}_{m},
\end{align*}
where \(\overline{\mathcal{A}}_{m} := \{(\mathbf{a}_{m}, \gamma_m): \mathbf{a}_{m} \in \mathcal{A}_m, \sum_{i=1}^{I+1} \gamma^i_m = 1, \gamma^i_m = 1 \Leftrightarrow \mathbf{a}_m = \hat{\mathbf{a}}^i_m, \forall i \in [I]\}\). Similar to the representation~\eqref{model:Q_min}, the above formulation involves exponentially many constraints. Nevertheless, we can identify the violated constraints and incorporate them only when violated by solving the following MILP for fixed \((\mathbf{x}_n, \delta_n, \bm{\psi}_{m})\):
\begin{subequations}
\begin{align}
\max_{\mathbf{a}_m \in \mathcal{A}_m, \gamma_m \geq 0} \ & \ \sum_{i=1}^I \delta^i_n \gamma^i_m \hat{u}^i_m + \mathcal{M} (\delta^{I+1}_n + \gamma^{I+1}_m - \delta^{I+1}_n \gamma^{I+1}_m) - \mathbf{a}_{m}^{\top} \bm{\psi}_{m} \nonumber \\
\text{s.t.} \ & \ 1 - \|\mathbf{a}_m - \hat{\mathbf{a}}^i_m\|_1 \leq \gamma^i_m \leq 1 - \|\mathbf{a}_m - \hat{\mathbf{a}}^i_m\|_\infty, \quad \forall i \in [I], \label{Q-overline-con-1} \\
& \ \sum_{i=1}^{I+1} \gamma^i_m = 1, \label{Q-overline-con-2}
\end{align}
\end{subequations}
where constraints~\eqref{Q-overline-con-1}--\eqref{Q-overline-con-2} ensure \((\mathbf{a}_{m}, \gamma_m) \in \overline{\mathcal{A}}_{m}\). In lines~\ref{alg-snd-note-1}--\ref{alg-snd-note-2} of Algorithm~\ref{alg:SDN}, in addition to updating the lower approximation \(\underline{\mathcal{Q}}_m(\cdot, \cdot)\), we solve formulation~\eqref{Q-overline} with respect to \((\hat{\mathbf{x}}^i_{p(n)}, \hat{\mathbf{a}}_n)\) obtained from the latest forward pass and update \(\overline{\mathcal{Q}}_{n}(\cdot, \cdot)\) with the ensuing optimal value \(\hat{u}^i_n\). We close this section by confirming that this modification of Algorithm~\ref{alg:SDN} yields a series of upper bounds UB, which converge finitely to \(\mathcal{Q}_1(\mathbf{x}_0, \mathbf{1})\) from above.
\begin{theorem}
UB decreases and, with probability one, converges in a finite number of iterations to \(\mathcal{Q}_1(\mathbf{x}_0, \mathbf{1})\).
\end{theorem}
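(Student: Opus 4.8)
The plan is to prove the two assertions separately: that the sequence $\text{UB} = \overline{\mathcal{Q}}_1(\mathbf{x}_0, \mathbf{1})$ is non-increasing, and that it reaches $\mathcal{Q}_1(\mathbf{x}_0, \mathbf{1})$ after finitely many iterations almost surely. I would open by recording the \emph{validity} of the upper approximation, namely $\overline{\mathcal{Q}}_n(\mathbf{x}_{p(n)}, \tilde{\mathbf{a}}_n) \ge \mathcal{Q}_n(\mathbf{x}_{p(n)}, \tilde{\mathbf{a}}_n)$ at every binary state and every $n \in \mathcal{T}$, by backward induction on the stage. At a leaf node the cost-to-go term is absent, so \eqref{Q-overline} returns $\mathcal{Q}_n$ at any explored state and the constant $\mathcal{M} \ge \max_{\mathbf{x}_{p(n)}, \tilde{\mathbf{a}}_n}\mathcal{Q}_n(\mathbf{x}_{p(n)}, \tilde{\mathbf{a}}_n)$ at any unexplored one; at an interior node, assuming $\overline{\mathcal{Q}}_m \ge \mathcal{Q}_m$ for all children, the monotonicity of the worst-case expectation operator $\sup_{\mathbb{P} \in \mathcal{P}_m(\mathbf{x}_n)} \mathbb{E}_{\mathbb{P}}[\cdot]$ in its integrand propagates the inequality through \eqref{Q-overline}. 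This yields $\text{UB} \ge \mathcal{Q}_1(\mathbf{x}_0, \mathbf{1})$ throughout.

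For the monotone decrease I would run a nested induction, over stages (outer, backward) and over iterations (inner). The key structural fact is that in the backward pass the loop index decreases from $T-1$ to $1$, so every child $m$ at stage $t+1$ is refreshed before its parent $n$ at stage $t$ within the same iteration; together with the inductive hypothesis that each child's stored value is non-increasing over the entire update history, the parent's update re-solves \eqref{Q-overline} --- the same minimization over the unchanged feasible region --- but with pointwise-smaller child approximations $\overline{\mathcal{Q}}_m$ fed into the monotone operator $\sup_{\mathbb{P}} \mathbb{E}_{\mathbb{P}}[\cdot]$, hence returns a value no larger than at the previous update of that state. The boundary case is immediate: a state passing from unexplored to explored drops from $\mathcal{M}$ to a value bounded above by $\mathcal{M}$. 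Reading UB off node $1$ then delivers the claimed monotone decrease.

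For finite convergence I would exploit that, after the binary expansion, each node carries only finitely many states, so there are finitely many (node, state) pairs in total, and I would invoke the preceding convergence theorem (following Theorem 2 of \citet{zou2019stochastic}), which guarantees that the forward-pass solutions become optimal in a finite number of iterations with probability one. Combining monotonicity with validity, UB converges to a limit $\overline{\mathcal{Q}}_1^{\infty} \ge \mathcal{Q}_1(\mathbf{x}_0, \mathbf{1})$, and it remains to exclude a strict gap. I would argue by backward induction on the stage that, once the forward passes have stabilized, the stored value at every state visited along the optimal trajectory equals the true $\mathcal{Q}_n$. Suppose not: then at some node $n$ the worst-case expectation in \eqref{Q-overline}, evaluated with $\overline{\mathcal{Q}}_m$, places positive mass on a child realization $\mathbf{a}_m \in \mathcal{A}_m$ that is either unexplored (contributing the inflated value $\mathcal{M}$) or not yet exact. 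But the forward pass samples $\hat{\mathbf{a}}_m$ from precisely the worst-case distribution of Proposition~\ref{prop:worst-dist}, so such a gap-inducing realization is visited with positive probability and, because both $\mathcal{A}_m$ and $\mathcal{T}$ are finite, is explored within finitely many iterations almost surely; its stored value then drops strictly below $\mathcal{M}$ and, by the induction hypothesis on the children, to the exact value, contradicting the persistence of the gap.

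The main obstacle is this last step: reconciling the worst-case distribution that the upper recursion \eqref{Q-overline} uses to evaluate $\overline{\mathcal{Q}}_n$ --- which, through the penalty $\mathcal{M}$, is actively attracted to unexplored child states --- with the sampling distribution that actually drives exploration in the forward pass. I would make this rigorous by showing that the inflated penalty $\mathcal{M}$ forces any child state capable of sustaining a gap into the support of the worst-case distribution used for sampling, so that finiteness of the support $\mathcal{A}_m$ upgrades ``visited infinitely often with probability one'' to ``explored after finitely many iterations with probability one.'' Once no unexplored or inexact child can carry positive worst-case mass, the evaluation of \eqref{Q-overline} at the optimal $\mathbf{x}_n$ coincides with the true worst-case expectation (Proposition~\ref{prop:exp-rep}); validity then forbids undershooting $\mathcal{Q}_n$ while achievability at the optimal decision forbids overshooting, forcing $\overline{\mathcal{Q}}_n = \mathcal{Q}_n$ and, at the root, $\text{UB} = \mathcal{Q}_1(\mathbf{x}_0, \mathbf{1})$.
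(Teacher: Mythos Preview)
The paper's appendix of technical proofs does not include a proof of this theorem, so no direct comparison is possible; I evaluate your proposal on its own merits. Your arguments for validity ($\overline{\mathcal{Q}}_n \ge \mathcal{Q}_n$ by backward induction) and for monotone decrease (nested induction plus monotonicity of $\sup_{\mathbb{P}\in\mathcal{P}_m(\mathbf{x}_n)}\mathbb{E}_{\mathbb{P}}[\cdot]$ in its integrand, combined with the backward order of updates within each iteration) are correct.

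The genuine gap sits exactly where you flag ``the main obstacle.'' Your finite-convergence step conflates three distinct worst-case distributions: (i) the true worst case for $\mathcal{Q}_m$; (ii) the worst case for $\underline{\mathcal{Q}}_m$, which is what line~8 of Algorithm~\ref{alg:SDN} actually samples from, since line~7 solves~\eqref{model:Q_n} with $\mathcal{Q}_m$ replaced by $\underline{\mathcal{Q}}_m$ and Proposition~\ref{prop:worst-dist} is then applied to that surrogate; and (iii) the worst case for $\overline{\mathcal{Q}}_m$ appearing in~\eqref{Q-overline}. After LB converges one can indeed argue that (i) and (ii) coincide at the optimal $\hat{\mathbf{x}}_n$, but (iii) need not: the penalty $\mathcal{M}$ pulls (iii) toward unexplored child states, whereas (ii) is entirely unaware of $\mathcal{M}$. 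Hence your key sentence --- that ``the inflated penalty $\mathcal{M}$ forces any child state capable of sustaining a gap into the support of the worst-case distribution used for sampling'' --- is unfounded, because the penalty shapes (iii), not the sampling distribution (ii). Concretely, if $\mathcal{P}_m(\hat{\mathbf{x}}_n)$ admits positive mass on some $\mathbf{a}'\in\mathcal{A}_m$ lying outside the support of (ii), and $\mathbf{a}'$ is never sampled, then $\overline{\mathcal{Q}}_m(\hat{\mathbf{x}}_n,\mathbf{a}')=\mathcal{M}$ persists and the supremum in~\eqref{Q-overline} can stay strictly above $\sup_{\mathbb{P}}\mathbb{E}_{\mathbb{P}}[\mathcal{Q}_m]$ forever. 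Closing the argument requires either an explicit exploration mechanism (e.g., also sampling from (iii), or sweeping all of $\mathcal{A}_m$ when updating $\overline{\mathcal{Q}}_n$) or a structural property of the moment ambiguity set guaranteeing that the support of (ii) already covers every realization that can receive mass in $\mathcal{P}_m(\hat{\mathbf{x}}_n)$; your proposal supplies neither.
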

\subsubsection{Lagrangian cuts through continuous state variables}
\label{subsec:ContRelax}
We approximate the continuous state variables of \(\mathbf{x}_n\), including power flow \(f_{ln}\) and power generation \(p_{gn}\), through binary expansion. This generates tight Lagrangian cuts by Proposition~\ref{prop:lag-cut} but increases the dimension of \(\mathbf{x}_n\) as well as the state space. As an alternative, we can generate the Lagrangian cuts through the continuous state variables without applying the binary expansion approximation. Specifically, we generate the Lagrangian cuts by solving formulation~\eqref{eq:Lag-max-min}, but without the binary restrictions~\eqref{binary-con} for variables \((f_{ln}, p_{gn})\). This leads to weaker Lagrangian cuts because the Lagrangian relaxation of formulation~\eqref{model:Q-underline} pertaining to continuous \(\mathbf{x}_{p(n)}\) does not admit strong duality. Nevertheless, this decreases the dimension of \(\mathbf{x}_{p(n)}\) and can speed up the convergence of the lower approximation \(\underline{\mathcal{Q}}_n(\cdot, \cdot)\). In implementation of Algorithm~\ref{alg:SDN}, we can start by incorporating Lagrangian cuts through continuous \(\mathbf{x}_n\) and then switch to their binary expansion, e.g., when the Lagrangian cuts cease to improve LB. In Section~\ref{subsec:CutExps} and Table~\ref{table:ContinuousRepresentations}, we demonstrate the effectiveness of this strategy.
\subsubsection{Faster generation of Lagrangian cuts}
\label{subsec:ReusePi}
The Lagrangian cuts~\eqref{eq:cut} are valid and tight by Proposition~\ref{prop:lag-cut}. However, the search for the coefficients \((\pi, \tau)\) often involves a subgradient algorithm, which iteratively solves MILP~\eqref{lag-func} and is computationally heavy. In contrast, the integer optimality cuts specified in Proposition~\ref{prop:integer-cut} are generally considered weaker than the Lagrangian cuts due to the large magnitude of its cut coefficients. Nevertheless, these coefficients admit closed-form expressions and so generating integer optimality cuts is much faster. Our numerical experiments in Table~\ref{table:CutStudy} confirm these observations as Algorithm~\ref{alg:SDN} with integer optimality cuts (see I+SB), which incurs significantly more iterations, consistently converges faster than with Lagrangian cuts (see L+SB).

To seek a better trade-off between the strength and efficiency of generating Lagrangian cuts, we make two observations. First, although weaker in general, the integer optimality cut is in fact a \emph{tight Lagrangian cut} at any binary \((\hat{\mathbf{x}}_{p(n)}, \hat{\mathbf{a}}_n)\).
\begin{proposition} \label{prop:integer-cut-tight}
For any binary \((\hat{\mathbf{x}}_{p(n)}, \hat{\mathbf{a}}_n)\), let \(\pi = (\underline{\mathcal{Q}}_n(\hat{\mathbf{x}}_{p(n)}, \hat{\mathbf{a}}_n) - L_n)(2\hat{\mathbf{x}}_{p(n)} - \mathbf{1})\), \(\tau = (\underline{\mathcal{Q}}_n(\hat{\mathbf{x}}_{p(n)}, \hat{\mathbf{a}}_n) - L_n)(2\hat{\mathbf{a}}_n - \mathbf{1})\), and \(\omega = \underline{\mathcal{Q}}_n(\hat{\mathbf{x}}_{p(n)}, \hat{\mathbf{a}}_n) - (\underline{\mathcal{Q}}_n(\hat{\mathbf{x}}_{p(n)}, \hat{\mathbf{a}}_n) - L_n) (\mathbf{1}^{\top}\hat{\mathbf{x}}_{p(n)} + \mathbf{1}^{\top}\hat{\mathbf{a}}_n)\) as in the integer optimality cut (see Proposition~\ref{prop:integer-cut}). Then, equality~\eqref{eq:tight-cut} holds. In addition, \((\pi, \tau) \in \argmax_{\pi, \tau} \mathcal{R}_{\hat{\mathbf{x}}_{p(n)}, \hat{\mathbf{a}}_n}(\pi, \tau)\) and \(\omega = \mathcal{L}_n(\pi, \tau)\).
\end{proposition}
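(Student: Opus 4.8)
The plan is to verify the three assertions directly, leaning on the strong-duality identity established in Proposition~\ref{prop:lag-cut}, namely that $\max_{\pi,\tau}\mathcal{R}_{\hat{\mathbf{x}}_{p(n)},\hat{\mathbf{a}}_n}(\pi,\tau) = \underline{\mathcal{Q}}_n(\hat{\mathbf{x}}_{p(n)},\hat{\mathbf{a}}_n)$. Throughout I would write $\Delta := \underline{\mathcal{Q}}_n(\hat{\mathbf{x}}_{p(n)},\hat{\mathbf{a}}_n) - L_n$, which is nonnegative because $L_n$ lower-bounds $\underline{\mathcal{Q}}_n$. The first step is a purely algebraic check of~\eqref{eq:tight-cut}: substituting the stated $(\pi,\tau,\omega)$ and using the binary identity $(2\hat{\mathbf{x}}_{p(n)}-\mathbf{1})^\top\hat{\mathbf{x}}_{p(n)} = \mathbf{1}^\top\hat{\mathbf{x}}_{p(n)}$ (valid since $\hat{x}_j^2=\hat{x}_j$, and likewise for $\hat{\mathbf{a}}_n$), the cross terms collapse and the right-hand side of~\eqref{eq:tight-cut} reduces to $\underline{\mathcal{Q}}_n(\hat{\mathbf{x}}_{p(n)},\hat{\mathbf{a}}_n)$.

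The crux is evaluating $\mathcal{L}_n(\pi,\tau)$ from~\eqref{lag-func} at the prescribed $(\pi,\tau)$. First I would decompose the inner minimization: for each fixed binary copy $(\mathbf{r}_n,\mathbf{w}_n)$ satisfying~\eqref{binary-con}, minimizing over $(\mathbf{x}_n,\mathbf{y}_n)$ subject to~\eqref{model:Q_n-con} returns exactly $\underline{\mathcal{Q}}_n(\mathbf{r}_n,\mathbf{w}_n)$, so that $\mathcal{L}_n(\pi,\tau) = \min_{(\mathbf{r}_n,\mathbf{w}_n)}\{\underline{\mathcal{Q}}_n(\mathbf{r}_n,\mathbf{w}_n) - \pi^\top\mathbf{r}_n - \tau^\top\mathbf{w}_n\}$. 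The key identity is that for binary vectors $\mathbf{1}^\top\hat{\mathbf{x}}_{p(n)} - (2\hat{\mathbf{x}}_{p(n)}-\mathbf{1})^\top\mathbf{r}_n = \|\mathbf{r}_n - \hat{\mathbf{x}}_{p(n)}\|_1$ equals the Hamming distance, and similarly for $\mathbf{w}_n$. Substituting the prescribed $(\pi,\tau)$, the objective of the inner minimization becomes $\underline{\mathcal{Q}}_n(\mathbf{r}_n,\mathbf{w}_n) + \Delta\, d(\mathbf{r}_n,\mathbf{w}_n) - \Delta(\mathbf{1}^\top\hat{\mathbf{x}}_{p(n)}+\mathbf{1}^\top\hat{\mathbf{a}}_n)$, where $d$ denotes the total Hamming distance to $(\hat{\mathbf{x}}_{p(n)},\hat{\mathbf{a}}_n)$.

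It then remains to show this minimum is attained at $(\hat{\mathbf{x}}_{p(n)},\hat{\mathbf{a}}_n)$ with value $\omega$. At that point $d=0$, and the value equals the claimed $\omega$ exactly. For any other feasible binary $(\mathbf{r}_n,\mathbf{w}_n)$, integrality forces $d\ge 1$, and combining $\underline{\mathcal{Q}}_n(\mathbf{r}_n,\mathbf{w}_n)\ge L_n$ with $\Delta d \ge \Delta = \underline{\mathcal{Q}}_n(\hat{\mathbf{x}}_{p(n)},\hat{\mathbf{a}}_n) - L_n$ shows the objective is at least $\omega$; this is precisely the estimate that makes the integer optimality cut valid in Proposition~\ref{prop:integer-cut}. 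Hence $\mathcal{L}_n(\pi,\tau) = \omega$, the third assertion. Finally, plugging this into $\mathcal{R}_{\hat{\mathbf{x}}_{p(n)},\hat{\mathbf{a}}_n}(\pi,\tau) = \pi^\top\hat{\mathbf{x}}_{p(n)} + \tau^\top\hat{\mathbf{a}}_n + \mathcal{L}_n(\pi,\tau)$ and using the first-step collapse gives $\mathcal{R}_{\hat{\mathbf{x}}_{p(n)},\hat{\mathbf{a}}_n}(\pi,\tau) = \underline{\mathcal{Q}}_n(\hat{\mathbf{x}}_{p(n)},\hat{\mathbf{a}}_n)$, which by the strong-duality value of Proposition~\ref{prop:lag-cut} is the maximum; therefore $(\pi,\tau)\in\argmax_{\pi,\tau}\mathcal{R}_{\hat{\mathbf{x}}_{p(n)},\hat{\mathbf{a}}_n}(\pi,\tau)$.

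The main obstacle I anticipate is the decomposition step rewriting $\mathcal{L}_n$ as a minimization of $\underline{\mathcal{Q}}_n(\mathbf{r}_n,\mathbf{w}_n)$ over the binary copies: one must argue carefully that, since the dualized copy constraints~\eqref{eq:fishing-x}--\eqref{eq:fishing-a} are the only coupling between $(\mathbf{r}_n,\mathbf{w}_n)$ and the original state, the inner problem separates and genuinely reproduces $\underline{\mathcal{Q}}_n(\mathbf{r}_n,\mathbf{w}_n)$ (with the convention that infeasible copies contribute $+\infty$ and are harmless in the minimum). Everything else is Hamming-distance bookkeeping together with the nonnegativity of $\Delta$, which are routine once this reduction is in place.
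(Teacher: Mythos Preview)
Your proposal is correct and follows essentially the same route as the paper: both verify~\eqref{eq:tight-cut} by the binary identity, rewrite the penalized Lagrangian using $(2\hat{x}-\mathbf{1})^{\top}r$ as a Hamming-distance term, bound the minimum by $L_n + \Delta$ away from $(\hat{\mathbf{x}}_{p(n)},\hat{\mathbf{a}}_n)$, and then invoke the strong-duality value from Proposition~\ref{prop:lag-cut} to conclude optimality of $(\pi,\tau)$. Your only notable addition is making the decomposition $\mathcal{L}_n(\pi,\tau)=\min_{(\mathbf{r}_n,\mathbf{w}_n)}\{\underline{\mathcal{Q}}_n(\mathbf{r}_n,\mathbf{w}_n)-\pi^{\top}\mathbf{r}_n-\tau^{\top}\mathbf{w}_n\}$ explicit, which the paper uses silently; this is a welcome clarification and not a genuine obstacle, since once~\eqref{eq:fishing-x}--\eqref{eq:fishing-a} are relaxed the copies $(\mathbf{r}_n,\mathbf{w}_n)$ enter only through the constraint block~\eqref{model:Q_n-con} and the inner minimum over $(\mathbf{x}_n,\mathbf{y}_n)$ is precisely the definition of $\underline{\mathcal{Q}}_n(\mathbf{r}_n,\mathbf{w}_n)$.
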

Proposition~\ref{prop:integer-cut-tight} suggests that, at any binary \((\hat{\mathbf{x}}_{p(n)}, \hat{\mathbf{a}}_n)\), there exist multiple tight Lagrangian cuts in the sense of~\eqref{eq:tight-cut} and the integer optimality cut is one of them, but with steep slopes.

Second, the objective function \(\mathcal{R}_{\mathbf{x}_{p(n)}, \tilde{\mathbf{a}}_n}(\cdot, \cdot)\) of formulations~\eqref{eq:Lag-max-min} remains nearly unchanged in consecutive iterations of Algorithm~\ref{alg:SDN}. Indeed, in each iteration Algorithm~\ref{alg:SDN} only updates the lower approximation \(\underline{\mathcal{Q}}_m(\cdot, \cdot)\) in formulation~\eqref{lag-func}, while the rest of~\eqref{lag-func} remains the same. This suggests that, when searching for the coefficients \((\pi, \tau)\) of a Lagrangian cut through the subgradient algorithm, Algorithm~\ref{alg:SDN} solves \emph{nearly the same} formulation~\eqref{eq:Lag-max-min} in consecutive iterations, and it will be a waste of effort if we start each subgradient algorithm from scratch. In contrast, we propose using the integer optimality cut coefficients as an ``anchor'' and to revise past maximizers for~\eqref{eq:Lag-max-min} to accelerate the subgradient algorithm. We make this concrete in Algorithm~\ref{alg:PiRecycle}. This algorithm starts from the most recent maximizer and checks its optimality to formulation~\eqref{eq:Lag-max-min} with respect to an updated (but similar) \(\mathcal{R}_{\mathbf{x}_{p(n)}, \tilde{\mathbf{a}}_n}(\cdot, \cdot)\). In case of optimal, we find a tight and strong Lagrangian cut in one shot; and if not optimal, we iteratively move towards the integer optimality cut coefficients to check other past maximizers. This procedure ends with a Lagrangian cut that is at least as strong as the integer optimality cut and we demonstrate its effectiveness in Section~\ref{subsec:CutExps} (see Table~\ref{table:CutStudy}).
\begin{algorithm}
\small
    \caption{Generation of Lagrangian cuts by reusing past cut coefficients}\label{alg:PiRecycle}
    \begin{algorithmic}[1]
    \State Input: iteration index \(i \gets 0\), iteration budget $I_{\text{max}}$, past cut coefficients $\Pi:=\{\pi^{h},\tau^{h}\}_{h=1}^{H}$, most recent cut coefficients \((\pi^H,\tau^H)\), step size \(\alpha\), neighborhood threshold \(\epsilon\), binary \((\hat{\mathbf{x}}_{p(n)}, \hat{\mathbf{a}}_n)\);
    \State Compute the integer optimality cut coefficients $(\pi_{\text{int}},\tau_{\text{int}})$ and set the iterate $(\bar{\pi},\bar{\tau}) \gets (\pi^{H},\tau^{H})$;
    \While{$i < I_{\text{max}}$}
    \If{$\mathcal{R}_{\hat{\mathbf{x}}_{p(n)}, \hat{\mathbf{a}}_n}(\bar{\pi},\bar{\tau}) = \mathcal{R}_{\hat{\mathbf{x}}_{p(n)}, \hat{\mathbf{a}}_n}(\pi_{\text{int}},\tau_{\text{int}})$}
    \State \textbf{break};
    \Else
    \State Consider the convex combination $(\pi,\tau) \gets \alpha (\pi_{\text{int}},\tau_{\text{int}}) + (1 - \alpha) (\bar{\pi},\bar{\tau})$;
    \State Find a $(\pi^{h^{*}},\tau^{h^{*}}) \in \argmin \{\|(\pi^{h},\tau^{h}) - (\pi,\tau)\|_{1}: \ (\pi^{h},\tau^{h}) \in \Pi\}$;
    \State \textbf{If} $\|(\pi^{h^{*}},\tau^{h^{*}}) - (\pi,\tau)\|_{1} \leq \epsilon$ \textbf{then}  $(\bar{\pi},\bar{\tau}) \gets (\pi^{h^{*}},\tau^{h^{*}})$ and remove \((\pi^{h^{*}},\tau^{h^{*}})\) from $\Pi$;
    \State \textbf{else}  $(\bar{\pi},\bar{\tau}) \gets (\pi,\tau)$;
    \State $i \gets i+1$;
    \EndIf
    \EndWhile
    \If{$\mathcal{R}_{\hat{\mathbf{x}}_{p(n)}, \hat{\mathbf{a}}_n}(\bar{\pi},\bar{\tau}) \neq \mathcal{R}_{\hat{\mathbf{x}}_{p(n)}, \hat{\mathbf{a}}_n}(\pi_{\text{int}},\tau_{\text{int}})$}
    \State $(\bar{\pi},\bar{\tau}) \gets (\pi_{\text{int}},\tau_{\text{int}})$;
    \EndIf
    \State Return $(\bar{\pi},\bar{\tau})$;
    \end{algorithmic}
\end{algorithm}
\section{Numerical Case Study}
\label{sec:NumStudies}
We present a numerical case study based on a California power grid. We introduce the case and experiment setup in Section \ref{subsec:Instance}, compare the effectiveness of different computational strategies in Section \ref{subsec:CutExps}, report the performance of our dynamic line switching model in Section \ref{subsec:CompExps}, and demonstrate the value of modeling DDU in Section \ref{subsec:Sensitivity}. Finally, in Section \ref{subsec:PolicyExtract} we construct two classes of easy-to-implement line switching policies and demonstrate their performance.
\begin{table}[ht]
\centering
\small
\caption{Characteristics of transmission grid}
\label{table:Characteristics}
\begin{tabular}{lrrrr}
\hline
Component & Characteristic & Min & Max & Average \\
\hline
Line & Rating (MW/h) & 20.1 & 110.4 & 73.2 \\
Bus & Load (MW/h) & 0.0 & 14.2 & 3.5 \\
Generator & Capacity (MW/h) & 2.3 & 120.6 & 88.2 \\
& Cost (\$/MWh) & 29.5 & 54.0 & 32.5 \\
Network & Total load (MW/h) & 170.3 & 220.2 & 192.2 \\
\hline
\end{tabular}
\end{table}
\subsection{Experiment setup}
\label{subsec:Instance}
We create a test instance based on a region in California and the transmission grid therein through the CATS dataset~\citep{taylor2023california}. In Table~\ref{table:Characteristics}, we report the ranges of the instance parameters. In addition, we generate a scenario tree for wildfire propagation based on historical wildfire perimeters~\citep{calfiredata} and detail the generation approach in Appendix~\ref{apdx:ScenTree}. Finally, we generate the bus load based on the CAISO 2019 load data~(\cite{caiso2019report}; see Figs. \ref{fig:Instance1}--\ref{fig:Instance2} in Appendix \ref{apdx:Instance} for a depiction of the instance data).

To calibrate the ambiguity set \(\mathcal{P}_m(\mathbf{x}_n)\), we set a base value $\overline{\gamma}$ based on the fuel data in the region as well as transmission line characteristics, and a base value $\bar{\beta}$ based on transmission line characteristics such as nominal capacity and thermal rating. We conduct sensitivity analysis for \(\gamma\) and \(\beta\) in Section~\ref{subsec:Sensitivity}. We ran all experiments on an Intel Xeon CPU with 12 cores @ 3.4 GHz and 128 GB of memory. We solved all MILP formulations using GUROBI 11 through Python 3.11.8. We generated the cutting planes in Algorithm~\ref{alg:SDN} using lazy-constraints in GUROBI.
\begin{table}[ht]
\centering
\small
\caption{Gap comparison between different representations of $\mathbf{x}$.}
\label{table:ContinuousRepresentations}
\begin{tabular}{lrrr}
\hline
Representation of $\mathbf{x}$ in Lagrangian cuts & Best gap (\%) & Iterations & Runtime (s) \\
\hline
Binary $\mathbf{f}$ and $\mathbf{p}$ & 74.3 & \textbf{24} & 391.2 \\
Continuous $\mathbf{f}$ + Binary $\mathbf{p}$ & 82.9 & 43 & \textbf{254.3} \\
Continuous $\mathbf{p}$ + Binary $\mathbf{f}$ & 60.3 & 35 & 301.2 \\
Proposed & \textbf{45.2} & 51 & 434.9 \\
\hline
\end{tabular}
\end{table}
\subsection{Comparisons of computing strategies}
\label{subsec:CutExps}
First, we evaluate the strategy of generating the Lagrangian cuts with respect to the continuous decision variables \((f_{ln}, p_{gp(n)})\), as opposed to their binary expansion approximations, as detailed in Section~\ref{subsec:ContRelax}. For comparison, we implemented the benchmark strategies of generating Lagrangian cuts based on (i) binary expansion of \((f_{ln}, p_{gp(n)})\), (ii) continuous \(f_{ln}\) and binary expansion of \(p_{gp(n)}\), and (iii) continuous \(p_{gp(n)}\) and binary expansion of \(f_{ln}\). We run Algorithm~\ref{alg:SDN} on an instance with \(T = 24\), \(|\mathcal{S}_{T}| = 150\) using these strategies until no improvement of LB for 5 consecutive iterations and then report the gap between the final LB and the optimal value in Table~\ref{table:ContinuousRepresentations}. From this table, we observe that the proposed strategy achieved a significantly smaller gap than other benchmarks in slightly longer runtime. Hence, we stick to this strategy for the rest of numerical case studies. 

\begin{table}[ht]
\centering
\small
\setlength{\tabcolsep}{3pt}
\caption{Comparison of time and iterations of different cut strategies to convergence}
\label{table:CutStudy}
\begin{tabular}{@{}rrrrrrrrrrrrrrrrr@{}}
\hline
&&\multicolumn{3}{c}{I}&\multicolumn{3}{c}{L}&\multicolumn{3}{c}{I+SB}&\multicolumn{3}{c}{L+SB}&\multicolumn{3}{c}{Algorithm \ref{alg:PiRecycle}+SB}\\
$T$ & $|\mathcal{S}_{T}|$&  1\%  & 0.1\% & \#Iter &  1\% & 0.1\% & \#Iter  &  1\% & 0.1\% & \#Iter  &  1\% & 0.1\% & \#Iter&  1\% & 0.1\% & \#Iter  \\
\hline
6 & 50 & 881 & 1446 & 252 & 909 & 1096 & 23 & 512 & 630 & 37 & 678 & 794 & \textbf{15} & {\bf 382} & {\bf 425} & 27 \\
6 & 100 & 1840 & 3141 & 378 & 2377 & 2983 & 45 & 1017 & 1302 & 64 & 1487 & 1813 & \textbf{26} & {\bf 894} & {\bf 1023} & 43 \\
6 & 150 & 3561 & 5737 & 593 & 2948 & 3492 & 45 & 2012 & 2431 & 92 & 3126 & 3597 & \textbf{47} & {\bf 1405} & {\bf 2231} & 69 \\
12 & 50 & 2230 & 3306 & 365 & 1468 & 1600 & \textbf{28} & 840 & 934 & 51 & 1956 & 2071 & 33 & {\bf 682} & {\bf 765} & 44 \\
12 & 100 & 2306 & 3512 & 339 & 2546 & 2852 & 48 & 1840 & 2102 & 83 & 2389 & 2599 & \textbf{36} & {\bf 1537} & {\bf 1729} & 58 \\
12 & 150 & 6924 & 9722 & 766 & 7452 & 7694 & \textbf{77} & 3895 & 4102 & 124 & 8102 & 8126 & 82 & {\bf 3402} & {\bf 3624} & 101 \\
24 & 50 & 2076 & 3304 & 458 & 2078 & 2431 & \textbf{37} & \textbf{840} & 1480 & 73 & 2617 & 2975 & 45 & 920 & \textbf{1252} & 81 \\
24 & 100 & 5491 & 7933 & 813 & 5999 & 6373 & 95 & 2957 & 3204 & 120 & 5041 & 5202 & \textbf{57}& {\bf 2498} & {\bf 2758} & 111 \\
24 & 150 & 11656 & 17596 & 1137 & 10567 & 11729 & 75 & \textbf{5203} & 7023 & 147 & 10079 & 10868 & \textbf{68} & 5810 & {\bf 6203} & 126 \\
\hline
\end{tabular}
\vspace{0.5em}
\small
{Columns 1\% and 0.1\% report the time in seconds to reach the corresponding optimality gap.}
\end{table}

Second, we evaluate the effectiveness of different cutting planes, as well as their combinations, for lower approximating $\mathcal{Q}_{m}(\mathbf{x}_{n},\mathbf{a}_{m})$. Specifically, we consider three types of cuts: integer optimality cuts (I), Lagrangian cuts (L), and strengthened Benders cuts (SB) in the backward pass of Algorithm~\ref{alg:SDN}. We tested (I) and (L) by themselves as they are tight cuts that can guarantee convergence. Additionally, we incorporated (SB) to accelerate convergence and tested the combinations (I+SB) and (L+SB). We report the runtime and the number of iterations for Algorithm~\ref{alg:SDN} converging to an optimality gap of 1\% and 0.1\%, respectively, in Table~\ref{table:CutStudy}. Putting the last strategy (Algorithm~\ref{alg:PiRecycle}+SB) aside, we observe that the per-iteration runtime for (I) is the shortest, yet more iterations are required. This suggests that, although the (I) cuts are cheap to generate, their strength is relatively weaker. In contrast, (L) achieved the same optimality gaps within much less number of iterations but similar or even longer runtime than (I). This indicates that Lagrangian cuts are stronger but take longer to obtain. It makes sense because producing each Lagrangian cut involves a subgradient algorithm and solving a series of MILP. In addition, incorporating (SB) significantly accelerated the convergence, with shorter runtime and less iterations. This indicates that (SB) cuts are good complements for both (I) and (L) cuts. In particular, the combination (I+SB) constantly outperforms other strategies on different test instances.

Third, we evaluate Algorithm~\ref{alg:PiRecycle}, which reuses dual multipliers from earlier iterations to generate Lagrangian cuts. We invoked this strategy every 5 iterations of Algorithm~\ref{alg:SDN} with $\epsilon=0.3$ and $I_{\text{max}} = 15$. From Table~\ref{table:CutStudy}, we observe that the proposed strategy significantly shortened the runtime. For example, it cuts the runtime of (L+SB) in half in most instances to achieve an optimality gap of 1\% or 0.1\%.  Even when compared to the best-performing strategy (I+SB) from the last comparison, Algorithm~\ref{alg:PiRecycle} was able to further shorten the runtime by around 15\%. In addition, this strategy produced more iterations than the combination (L+SB), suggesting that Algorithm~\ref{alg:PiRecycle} is indeed computationally cheaper than the subgradient algorithm.

\subsection{Value of dynamic line switching}
\label{subsec:CompExps}

We evaluate the value of adopting a dynamic line switching policy (MS), as opposed to an alternative two-stage line switching plan (TS) or simply no switching (NS). Here, TS refers to (preventively) reconfiguring the transmission grid at the root node of \(\mathcal{T}\) and then sticking to the grid topology throughout~\citep{yang2024multiperiod,hosseini2020computationally}, while other operations (power generation, phase angle, etc.) can still be dynamically adjusted (see Appendix~\ref{subsec:MultiScaleDecisions} for a detailed model). In addition, NS is a special case of TS without initial grid reconfiguration. 

In Fig.~\ref{fig:GapViz}, we compare the average load shedding costs and average operational costs of MS, TS, and NS across all 24 hours of simulation. {\color{black} We compute the average at stage $t$ by considering the costs of every node in the set $\mathcal{S}_{t}$ and weighing them by their probability of occurrence.} From Fig.~\ref{fig:LossGap}, we observe that TS and NS resulted in drastically higher load loss than MS. This demonstrates the value of adopting a dynamic line switching policy during wildfires. To demonstrate the flexibility of MS, we depict Fig.~\ref{fig:SolutionViz} and compare the switching decisions of MS and TS in two wildfire scenarios (denoted A and B). We observe that, while TS cannot adapt to these distinct wildfire propagation scenarios (it opened the same line), MS reconfigured the grid and rerouted the power flow accordingly. On the other hand, Fig.~\ref{fig:CostGap} shows that MS also incurred a higher operational cost than TS and NS (for extended comparisons, see the online repository~\cite{DLSData} of this paper).

\begin{figure}[ht]
\centering
\begin{subfigure}[b]{0.35\textwidth}
    \centering
    \includegraphics[width=\textwidth]{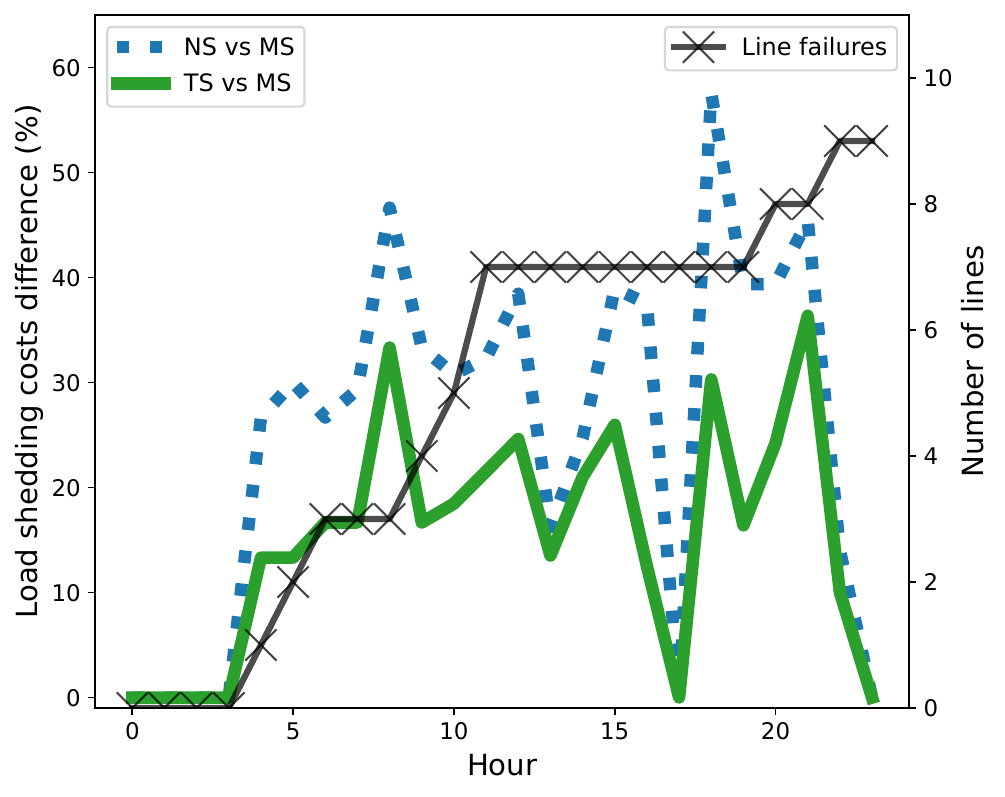}
    \caption{Gap of load loss}
    \label{fig:LossGap}
\end{subfigure}
\hspace{0.05\textwidth}
\begin{subfigure}[b]{0.35\textwidth}
    \centering
    \includegraphics[width=\textwidth]{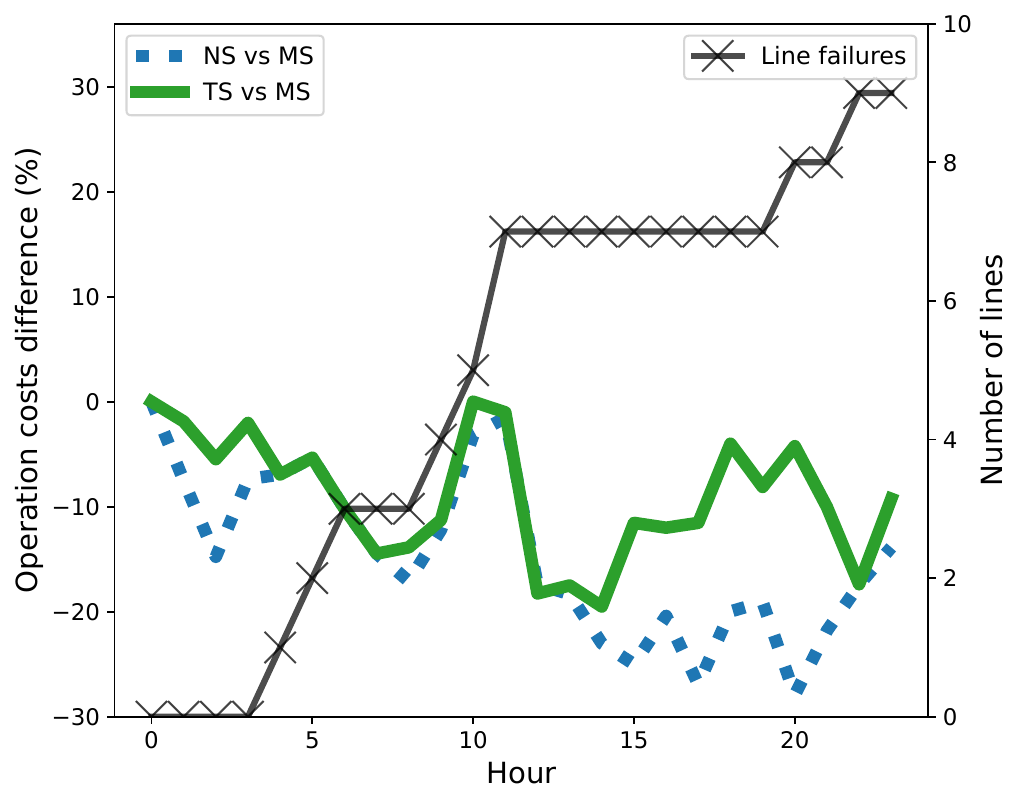}
    \caption{Gap of operational cost}
    \label{fig:CostGap}
\end{subfigure}
\caption{Average metric gaps between MS, TS, and NS}
\label{fig:GapViz}
\end{figure}

\begin{figure}[ht]
\centering
\begin{subfigure}[b]{0.25\textwidth}
    \centering
    \includegraphics[width=\textwidth]{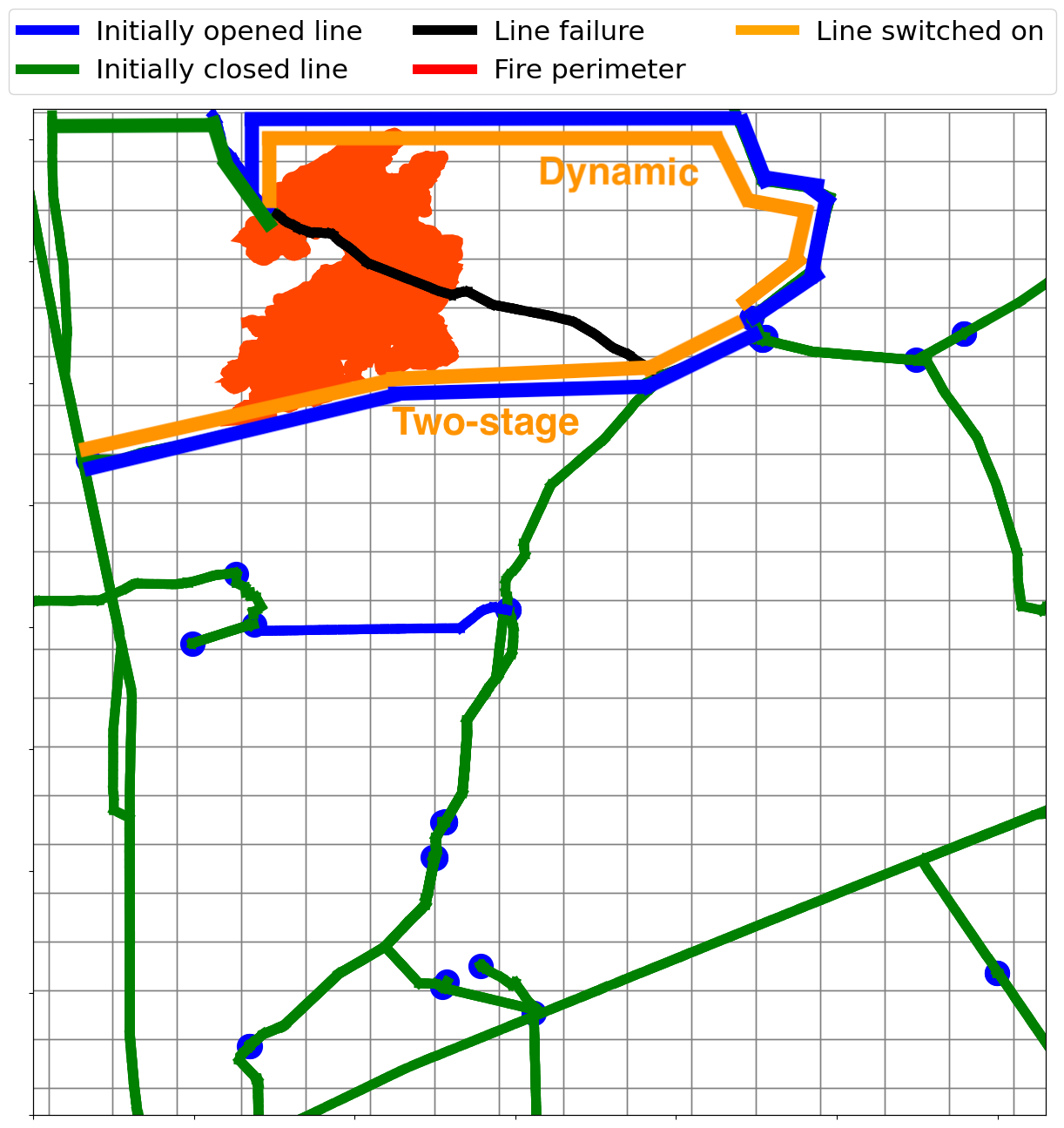}
    \caption{Switching in scenario A \label{fig:Switch1}}
\end{subfigure}
\hspace{0.05\textwidth}
\begin{subfigure}[b]{0.25\textwidth}
    \centering
    \includegraphics[width=\textwidth]{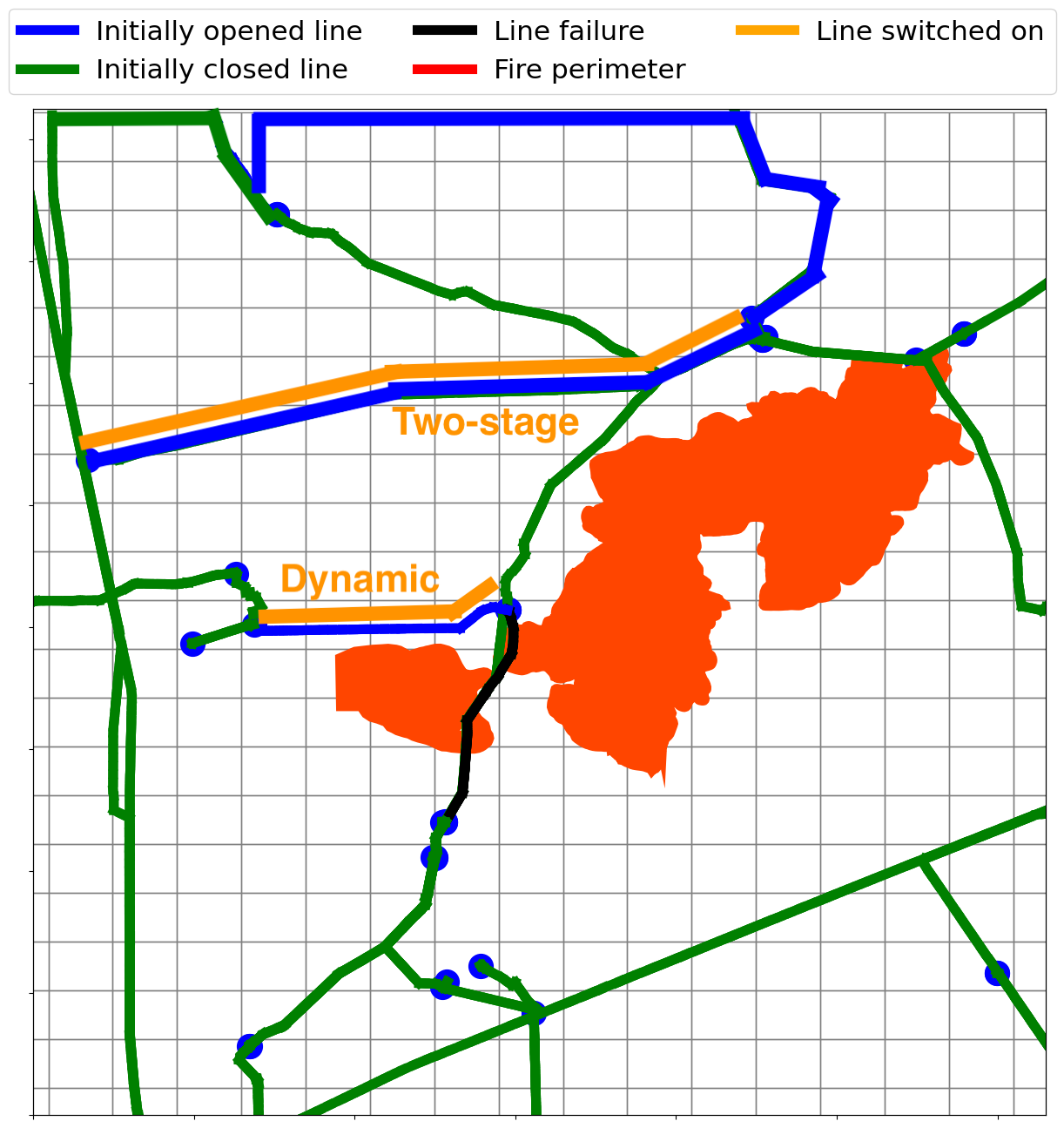}
    \caption{Switching in scenario B \label{fig:Switch2}}
\end{subfigure}
\caption{Comparison of switching decisions between MS and TS \label{fig:SolutionViz}}
\end{figure}

\subsection{Impact of DDU and DIU parameters}
\label{subsec:Sensitivity}
The ambiguity set \(\mathcal{P}_m(\mathbf{x}_n)\) models the line survival probability using a DDU component using parameter \(\beta\) and a DIU component using parameter \(\gamma\). To examine the impacts of DDU and DIU parameters, we visualize the optimal value of the MS model as a function of \(\beta\) and \(\gamma\) in Fig.~\ref{fig:SensSurfaces}. From Fig.~\ref{fig:SensSurfaces}, we observe that the impacts of the DDU parameter \(\beta\) are significantly larger than the DIU counterpart. This suggests that ignoring DDU can undermine the effectiveness of the MS model, demonstrating the value of modeling DDU in our study (for a complete set of sensitivity analyses, see the online repository~\cite{DLSData} of this paper).

\begin{figure}[ht]
\centering
\begin{subfigure}[b]{0.32\textwidth}
    \centering
    \includegraphics[width=\textwidth]{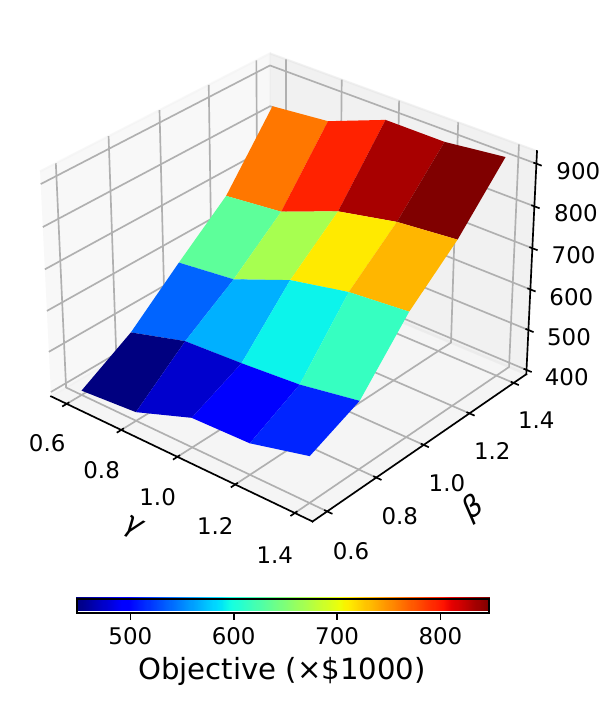}
    \caption{Left view \label{fig:Surf1}}
\end{subfigure}
\hspace{0.05\textwidth}
\begin{subfigure}[b]{0.3\textwidth}
    \centering
    \includegraphics[width=\textwidth]{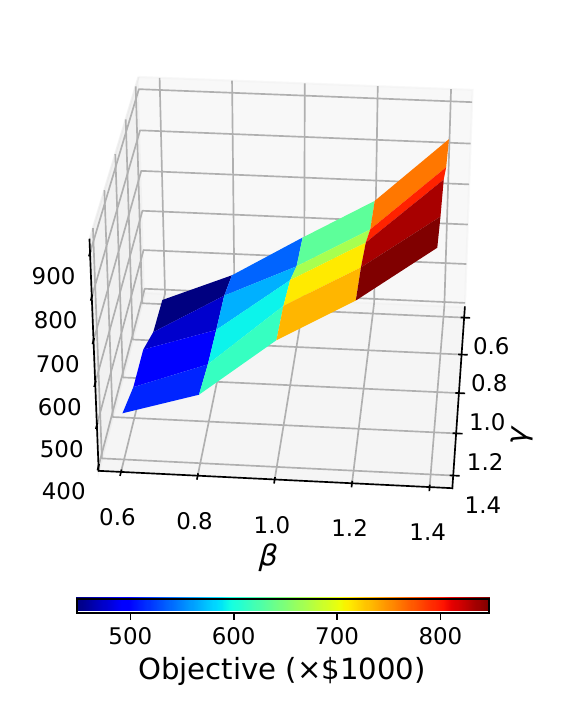}
    \caption{Right view \label{fig:Surf2}}
\end{subfigure}
\caption{Sensitivity of the MS optimal value in the DDU parameter \(\beta\) and DIU parameter \(\gamma\) \label{fig:SensSurfaces}}
\end{figure}

\subsection{Easy-to-implement policies}
\label{subsec:PolicyExtract}

We produce three line switching policies based on the final value function approximations \(\underline{\mathcal{Q}}_m(\cdot, \cdot)\) obtained from the SND algorithm. Given a time unit \(t\), the wildfire state \(w^{\text{fire}}_t\), and the grid state \((\mathbf{x}_{t-1}, \tilde{\mathbf{a}}_t)\), we first project \(w^{\text{fire}}_t\) to a node \(n \in \mathcal{S}_t\) encoding the closest wildfire state. Then, we produce the following three policies based on \(\underline{\mathcal{Q}}_m(\cdot, \cdot)\).
\begin{description}
    \item[{\bf Dynamic policy}] solves formulation~\eqref{model:Q_n} pertaining to node \(n\) and state \((\mathbf{x}_{t-1}, \tilde{\mathbf{a}}_t)\) with \(\mathcal{Q}_m(\cdot, \cdot)\) replaced by \(\underline{\mathcal{Q}}_m(\cdot, \cdot)\), and applies the ensuing solution \((\mathbf{x}_n, \mathbf{y}_n)\).
    \item[{\bf Topology policy}] is the same with the dynamic policy except that, when solving formulation~\eqref{model:Q_n}, it shrinks the feasible region of line switching to a set of grid topologies we collect from the final iteration of the SND algorithm. We detail the generation of this topology set in Algorithm~\ref{alg:Topology} in Appendix~\ref{apdx:Policies}. 
    \item[{\bf Mapping policy}] seeks to establish a deterministic look-up table that maps to the switching decision \(z_{lt}\) from the current state \(z_{l,t-1}\) of line \(l\), as well as the availability \(\tilde{\mathbf{a}}_t^{\mathcal{L}(l)}\) of the redundant lines in \(\mathcal{L}(l) := \{j \in \mathcal{L}: \text{line $l$ and $j$ feeds to the same bus}\}\). We sample the scenario tree in multiple replications and, in each replication, retrieve the decision for \(z_{lt}\) from the dynamic policy. Then, we round the average of these \(z_{lt}\) among all replications to either zero or one. We detail this approach in Algorithm~\ref{alg:Mapping} in Appendix~\ref{apdx:Policies}.
\end{description}

From a computing perspective, dynamic policy demands solving an MILP (e.g., formulation~\eqref{model:Q_n}) for each time unit \(t\) in its implementation. This is cheaper than solving the entire multi-stage DRO model (with a receding horizon, from time \(t\) to \(T\)), but can still be time-consuming with a large-scale transmission grid. As an alternative, the topology policy only chooses among a select set of topologies. We can implement it much more efficiently and interpret it more easily. Mapping policy completely waives the need of solving any optimization problems. We note that, although the (offline) preparation of the look-up tables demands resolving MILPs, the (online) implementation of these tables consumes no time. We evaluate the out-of-sample performance of these policies. To this end, we shall refer to the scenario tree we use in Algorithm~\ref{alg:SDN} as the training (in-sample) tree and perform out-of-sample simulations in the following two new trees: 

\begin{itemize}
    \item We regenerate a scenario tree using the same transition probabilities as in the training tree and refer to it as the out-of-sample tree.
    \item We generate another scenario tree with a more hazardous transition probabilities, with faster and/or larger propagation of the wildfire. We refer to it as the stress-test tree.
\end{itemize}

When implementing the above three policies, we project each new scenario (unobserved in the training tree) of the out-of-sample tree and stress-test tree to the closest scenario in the training tree with respect to 1-norm. We compare the performance of the topology policy and the mapping policy with the TS and the NS approaches in Table~\ref{table:TestComparison}, which reports the average and standard deviation of the performance gap (between these approaches and the dynamic policy) across 10 replications. From this table, we observe that the topology and mapping policies outperform both TS and NS significantly. This suggests that even the simplified versions of the dynamic policy retain an operational advantage of adaptability and outperform the non-adaptive policies (for additional results of this comparison, see the online repository~\cite{DLSData} of this paper).

\begin{table}[ht]
\centering
\small
\caption{Performance comparison of various policies in out-of-sample simulation.}
\label{table:TestComparison}
\begin{tabular}{ccrrr}
\hline
& & \multicolumn{3}{c}{Average (Standard Deviation)} \\
Approach vs Dynamic & Metric (\%) & Training & Out-of-sample & Stress-test \\
\hline
TS & Objective & 15.83 & 15.75 (1.03) & 24.58 (4.79) \\
& Operation & -9.54 & -9.97 (2.63) & -8.07 (4.53) \\
& Load shedding & 26.01 & 26.11 (3.29) & 37.77 (5.61) \\
NS & Objective & 30.13 & 30.33 (0.82) & 40.41 (4.22) \\
& Operation & -11.73 & -11.32 (1.84) & -10.15 (2.78) \\
& Load shedding & 47.92 & 47.11 (2.06) & 60.83 (5.67) \\
Topology & Objective & 5.78 & 5.14 (0.89) & 8.74 (3.62) \\
& Operation & -7.79 & -9.08 (0.82) & -7.57 (3.04) \\
& Load shedding & 11.23 & 10.88 (0.95) & 15.33 (2.40) \\
Mapping & Objective & 4.35 & 4.12 (1.23) & 10.47 (3.74) \\
& Operation & -0.35 & -1.38 (1.52) & -3.19 (2.17) \\
& Load shedding & 6.23 & 6.34 (2.01) & 16.02 (3.84) \\
\hline
\end{tabular}
\end{table}

\section{Conclusion}
\label{sec:Discussion}

We proposed a multi-stage DRO model with DDU to address the dynamic line switching of a transmission grid amidst wildfire propagation. We extended the SND algorithm proposed in~\cite{zou2018multistage} to solve this model and proposed strategies to enhance its computational performance, including deterministic upper bounds and faster generation of Lagrangian cuts. In addition, to facilitate online deployment of the proposed model, we proposed policies that are cheaper to implement and easier to interpret. We demonstrated the proposed model and computational strengthening strategies in a case study using a real-world transmission system and wildfire data.

\appendix
\section{Appendix to the paper}
\subsection{Nomenclature and formulation}
\label{apdx:notation}
We present a detailed formulation for the deterministic nodal line switching model as follows, whose nomenclature is summarized in Table~\ref{table:notation}.

\allowdisplaybreaks
\begin{subequations}
\label{model:S-OPF}
\begin{align} 
    \min_{\mathbf{x}_{n},\mathbf{y}_{n}} \quad& \sum_{g \in \mathcal{G}_{n}}C_{g}p_{gn} + \sum_{b \in \mathcal{N}} C^{l}\Delta_{bn}\label{obj:OPF}\\
     \mbox{s.t.}\quad& \overline{\theta}^{\min} \leq \theta_{bn} \leq \overline{\theta}^{\max}, \forall b \in \mathcal{N}, \label{cst:trans:angle}\\ 
     & \sum_{l \in l(b,\cdot)}f_{ln} - \sum_{l \in l(\cdot,b)}f_{ln} + \sum_{g \in g(b)} p_{gn} + \Delta_{bn} = d_{bn}, \forall b \in \mathcal{N},  \label{cst:trans:flowBalance} \\
     & \overline{F}_{l}^{\min} z_{ln} \leq f_{ln} \leq \overline{F}_{l}^{\max} z_{ln}, \forall l \in \mathcal{L}, \label{cst:trans:LineTransBoundz}\\
     & z_{ln} \leq a_{ln} , \forall l \in \mathcal{L}, \label{cst:trans:za}\\
     & B_{l}(\theta_{bn}-\theta_{b'n}) - f_{ln} + (2-z_{ln}-a_{ln})\mathcal{M}_{l} \geq 0, \forall l \in \mathcal{L}, (b,b') \in \mathcal{N},\label{cst:trans:OpenLine1}\\
     & B_{l}(\theta_{bn}-\theta_{b'n}) - f_{ln} - (2-z_{ln}-a_{ln})\mathcal{M}_{l} \leq 0, \forall l \in \mathcal{L}, (b,b') \in \mathcal{N},\label{cst:trans:OpenLine2}\\
     & P_{g}^{\min} \leq  p_{gn} \leq P_{g}^{\max}, \forall g \in \mathcal{G}, \label{cst:trans:GenBound}\\
     & p_{gn} - p_{gp(n)} \leq R_{g}^{+}, \forall g \in \mathcal{G},\label{cst:trans:Ramp1}\\
     & p_{gp(n)} - p_{gn} \leq R_{g}^{-}, \forall g \in \mathcal{G}, \label{cst:trans:Ramp2}\\
     & f_{ln} - (\overline{F}^{\text{max}}_{ln} - \overline{F}^{\text{nom}}_{ln})o_{ln} \leq  \overline{F}^{\text{nom}}_{ln} , \forall  l \in \mathcal{L}, \label{cst:trans:OverNomPos}\\
     & f_{ln} + (\overline{F}^{\text{max}}_{ln} - \overline{F}^{\text{nom}}_{ln})o_{ln} \geq  -\overline{F}^{\text{nom}}_{ln} , \forall  l \in \mathcal{L}, \label{cst:trans:OverNomNeg}\\
     &p_{gn} \geq 0, \forall g \in \mathcal{G},  \label{cst:trans:GenDom}\\
     &\Delta_{bn} \geq 0, \forall b \in \mathcal{N}, \label{cst:trans:AngleDom}\\
     &z_{ln},o_{ln} \in \{0,1\},\forall l \in \mathcal{L}. \label{cst:trans:SwitchDom}
\end{align}
\end{subequations}
In formulation~\eqref{model:S-OPF}, the objective function~\eqref{obj:OPF} seeks to minimize the generator production cost plus the load shedding cost, constraints~\eqref{cst:trans:angle} describe limits on the phase angles, constraints~\eqref{cst:trans:flowBalance} account for the flow balance for each bus, constraints~\eqref{cst:trans:LineTransBoundz} describe the transmission line capacity limits, depending on if the line is closed (\(z_{ln} = 1\)) or open (\(z_{ln} = 0\)), constraints~\eqref{cst:trans:za} make sure that we can close a line only if it is available (\(a_{ln} = 1\)) in the first place, constraints~\eqref{cst:trans:OpenLine1}--\eqref{cst:trans:OpenLine2} enforce the relationship between power flows and phase angles using DC power flow when the line is both available and closed, with \(\mathcal{M}_l := |B_{l}(\overline\theta^{\max} - \overline\theta^{\min})|\) denoting a sufficiently large constant, constraints~\eqref{cst:trans:GenBound} describe the generator minimum and maximum generation amounts, constraints~\eqref{cst:trans:Ramp1}--\eqref{cst:trans:Ramp2} describe the generator ramp-rate limits, and constraints~\eqref{cst:trans:OverNomPos}--\eqref{cst:trans:OverNomNeg} determine whether the power flow along each transmission line is within its nominal capacity (\(o_{ln}=0\)) or not (\(o_{ln}=1\)). 

\begin{table}
    \caption{A summary of sets, parameters, and variables.\label{table:notation}}\small
    \begin{tabular}{ll}
    \hline
    &\textbf{Sets}\\
    \hline
    \(\mathcal{C}(n)\) & Set of children nodes of node \(n\). \\
    $\mathcal{G}$ & Set of generators, indexed by $g$. \\
    $g(b)$ & Set of generators at bus $b \in \mathcal{N}$. \\
    $\mathcal{L}$ & Set of transmission lines, indexed by $l$. \\
    $l(b,\cdot)$ & Set of transmission lines with $b \in \mathcal{N}$ as the ``from'' bus. \\
    $l(\cdot,b)$ & Set of transmission lines with $b \in \mathcal{N}$ as the ``to'' bus. \\
    $\mathcal{N}$& Set of buses, indexed by $b$.\\
    \(\Pi(n)\) & Set of nodes in the scenario path leading to node \(n\).\\
    $\mathcal{S}_{t}$ & Set of nodes in stage $t$ of \(\mathcal{T}\).  \\
    $\mathcal{T}$ & Scenario tree for the wildfire propagation stochastic process. \\
    \hline
    &\textbf{Parameters}\\
    \hline
    $a_{ln}$ & Availability of transmission line $l \in \mathcal{L}$ at node $n \in \mathcal{T}$ (1 if available, 0 otherwise).\\
    $B_{l}$ & Susceptance of transmission line $l \in \mathcal{L}$.\\
    $\beta_{ln}$ & Sensitivity of the survival probability to the active power flow \(f_{ln}\) of line $l \in \mathcal{L}$ at node $n \in \mathcal{T}$.\\
    $C_{g}$ & Unit generation cost of generator $g \in \mathcal{G}$.\\
    $C^\text{L}_b$ & Unit cost of load shedding at bus \(b \in \mathcal{N}\).\\
    $d_{bn}$ & Real power load at bus $b \in \mathcal{N}$ for node $n \in \mathcal{T}$.\\
    $\overline{F}^{\max}_{l},\overline{F}^{\min}_{l}$ & Maximum/minimum ratings of transmission line $l \in \mathcal{L}$.\\
    $\overline{F}^{\text{nom}}_{l}$ & Nominal transmission rating of line $l \in \mathcal{L}$.\\
    $\gamma_{ln}$ & Nominal survival probability of line $l \in \mathcal{L}$ at node $n \in \mathcal{T}$.\\
    $K$ & Maximum number of line failures throughout the planning horizon. \\
    $P^{\max}_{g},P^{\min}_{g}$ & Maximum/minimum generation amounts of generator $g \in \mathcal{G}$.\\
    $p_{nm}$ & Transition probability from node $n$ to node $m \in \mathcal{C}(n)$ in the scenario tree $\mathcal{T}$. \\
    $T$ & Number of decision-making stages.\\
    $\overline{\theta}^{\max},\overline{\theta}^{\min}$ & Maximum/minimum bus voltage angle.\\
    \hline
    &\textbf{Decision variables}\\
    \hline
     $o_{ln}$ &  Indicator variable for \(f_{ln}\) exceeding the nominal rating $\overline{F}^{\text{nom}}_{l}$ of line \(l\) (1 if exceeding, 0 if not exceeding).\\
    $p_{gn}$ & Real power supply from generator $g \in \mathcal{G}$ at bus $b \in \mathcal{N}$ and node $n \in \mathcal{T}$.\\
    $f_{ln}$ & Real power flow
    on line $l \in \mathcal{L}$ at node $n \in \mathcal{T}$.   \\
    $\theta_{bn}$& Voltage angle at bus $b$ and node $n \in \mathcal{T}$.\\
    $\Delta_{bn}$ & Amount of load shedding at bus $b$ and node $n \in \mathcal{T}$. \\
    $\mathbf{x}_{n}$& = $[\mathbf{z}_n,\mathbf{o}_n,\mathbf{f}_{n},\mathbf{p}_{n}]$ Vector of inter-stage decision variables at node \(n \in \mathcal{T}\).\\
    $\mathbf{y}_{n}$& = $[\bm{\theta}_{n},\bm{\Delta}_{n}]$ Vector of intra-stage decision variables at node \(n \in \mathcal{T}\).\\
     $z_{ln}$ & Binary variable indicating a switching action of line $l$ at node $n \in \mathcal{T}$ (1 if closing, 0 if opening).\\
    \hline
\end{tabular}
\end{table}

\subsection{Generation of the scenario tree}
\label{apdx:ScenTree}

    We employ a data-driven approach to generate the scenario tree in the case study. We begin by discretizing the geographical region in our instance by considering a rectangular grid, such that we can have xx cells. We construct the scenario tree in a backwards fashion. We begin by having as many terminal nodes in the last stage, as we have historical fire perimeters in the region. Then, we consider a cellular automaton process, with basic propagation rules considering neighborhood fire propagation over the 24-hour period we simulate. We note that we do not consider dynamic wind conditions, nor more complex propagation models, but our framework can admit scenario trees that are generated by any other valid method. Then, with the backward-simulated scenarios, we compare the state of the cells starting from an initial state without fires, and merge scenarios as long as they have the same wildfire state, accumulating their probabilities, only branching whenever the fires change as they are simulated. In Figure \ref{fig:ScenarioGeneration}, we show the conceptual process of generating the scenario tree for the case study. Given that we assume a one-direction relationship, where wildfire impact line availability, we reduce the number of scenarios by only differentiating branching on them, i.e., differentiating between scenarios, if the wildfire impacts cells that are associated with lines that can fail, further reducing the scenario tree size.

    \begin{figure}
    \centering
    \includegraphics[width=0.5\textwidth]{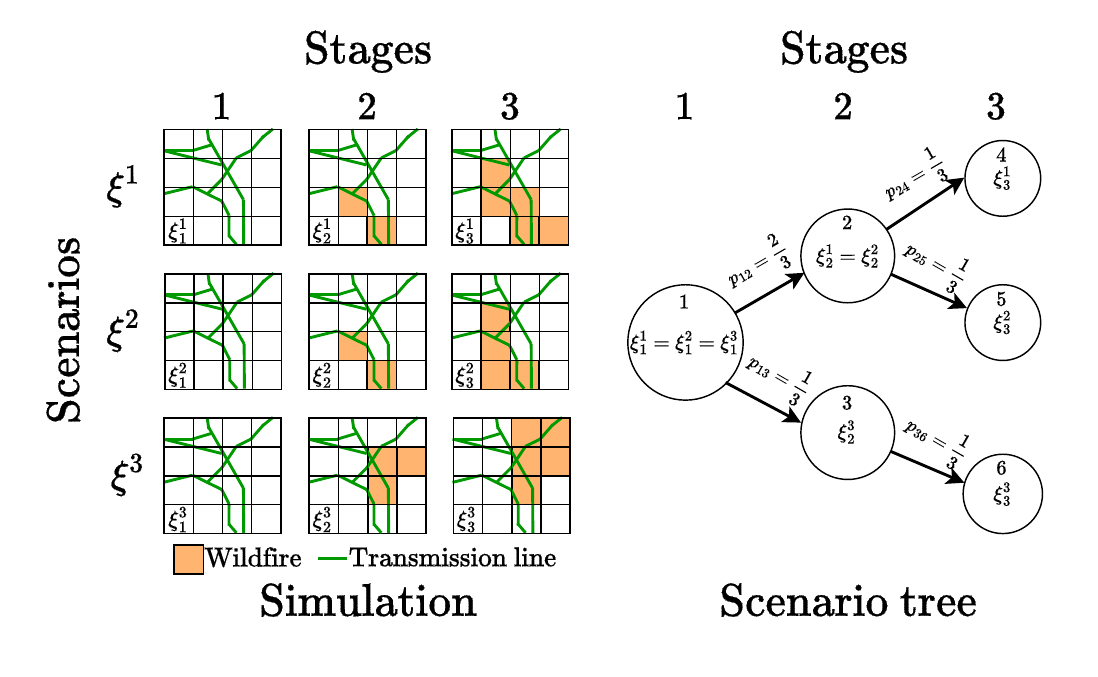}\caption{Scenario tree generation process\label{fig:ScenarioGeneration}}
    \end{figure}
    
\subsection{Approximation error of the binary expansion approximation}\label{apdx:BinExp}

\begin{theorem} \label{thm:expansion}
Assume that \(f_n(\mathbf{x}_n, \mathbf{y}_n, \tilde{\mathbf{a}}_n)\) is Lipschitz continuous in \((\mathbf{x}_n,\mathbf{y}_n)\), i.e.,
\[
|f_n(\mathbf{x}_n, \mathbf{y}_n, \tilde{\mathbf{a}}_n) - f_n(\mathbf{x}'_n, \mathbf{y}'_n, \tilde{\mathbf{a}}_n)| \leq L \|(\mathbf{x}_n,\mathbf{y}_n) - (\mathbf{x}'_n,\mathbf{y}'_n)\|_{\infty}, \; \forall \mathbf{x}_n, \mathbf{x}'_n, \mathbf{y}_n, \mathbf{y}'_n, \tilde{\mathbf{a}}_n, n \in \mathcal{T}
\]
for a constant \(L \geq 0\). Let \(z^* := \mathcal{Q}_1(\mathbf{x}_0, \mathbf{1})\) without the binary expansion approximation and \(z^*_{\text{bin}}\) represent the optimal value with the approximation. Then, it holds that \(z^* \leq z^*_{\text{bin}} \leq z^* + Ks\) for a constant \(K \geq 0\), where \(K\) depends only on \(L\) and the parameters for defining \(\mathcal{Q}_1(\mathbf{x}_0, \mathbf{1})\).
\end{theorem}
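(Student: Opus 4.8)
The plan is to prove the two inequalities separately. The lower bound $z^* \le z^*_{\text{bin}}$ is immediate: the binary expansion replaces each continuous state variable $Z \in [L,U]$ by values on the grid $\{L + sk : k = 0,\dots,2^E-1\} \cap [L,U]$ while keeping all remaining constraints intact, so every solution feasible to the binary-expanded model is also feasible to the original model. Restricting the feasible region of a minimization problem cannot decrease its optimal value, whence $z^* \le z^*_{\text{bin}}$.

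For the upper bound I would argue by backward induction on the stages of $\mathcal{T}$, comparing the original cost-to-go $\mathcal{Q}_n$ with its binary-expansion counterpart $\mathcal{Q}^{\text{bin}}_n$. The inductive claim is that there exist constants $K_n \ge 0$, depending only on $L$ and the model data, with $\mathcal{Q}^{\text{bin}}_n(\mathbf{x}_{p(n)}, \tilde{\mathbf{a}}_n) \le \mathcal{Q}_n(\mathbf{x}_{p(n)}, \tilde{\mathbf{a}}_n) + K_n s$ for every grid-feasible incoming state. Two ingredients feed this induction. First, I would establish that $\mathcal{Q}_n(\cdot, \tilde{\mathbf{a}}_n)$ is Lipschitz in the continuous coordinates of $\mathbf{x}_{p(n)}$: those coordinates enter node $n$ only through the right-hand sides of linear constraints (e.g., the ramp constraints~\eqref{cst:trans:Ramp1}--\eqref{cst:trans:Ramp2}) and do not alter the feasible set of the binary switching/overload variables, so for each fixed choice of those discrete variables the inner problem is a linear program whose value is Lipschitz in its right-hand side (with constant bounded by the norm of a dual optimum, finite under relatively complete recourse since unmet demand can always be shed through $\bm{\Delta}_n$); the minimum over the finitely many discrete choices preserves the Lipschitz property. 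Second, I would invoke Proposition~\ref{prop:exp-rep} to write the worst-case expectation as the linear program $\min_{\bm\psi_m \ge 0,\phi_m}\{\bm\psi_m^\top\beta_m\mathbf{x}_n + \bm\psi_m^\top\gamma_m + \phi_m:\ \eqref{cst:Q_min:lowerapprox}\}$, which shows the decision-dependent term is likewise Lipschitz in $\mathbf{x}_n$ (the multiplier $\bm\psi_m$ is bounded because $\mathcal{A}_m$ is bounded and $\mathcal{Q}_m$ is finite).

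The inductive step then proceeds by taking an optimal solution $(\mathbf{x}_n^\circ, \mathbf{y}_n^\circ)$ of $\mathcal{Q}_n(\mathbf{x}_{p(n)}, \tilde{\mathbf{a}}_n)$, freezing its binary components $(\mathbf{z}_n^\circ,\mathbf{o}_n^\circ)$, and rounding the continuous components $(\mathbf{f}_n^\circ,\mathbf{p}_n^\circ)$ to nearby grid points; since each such variable lies in $[L,U]$ and the grid has spacing $s$, a Hoffman/LP-sensitivity bound yields a grid-feasible completion $(\hat{\mathbf{x}}_n,\hat{\mathbf{y}}_n)$ with $\|(\hat{\mathbf{x}}_n,\hat{\mathbf{y}}_n) - (\mathbf{x}_n^\circ,\mathbf{y}_n^\circ)\|_\infty \le c\,s$. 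Using $(\hat{\mathbf{x}}_n,\hat{\mathbf{y}}_n)$ as a feasible (hence suboptimal) point for $\mathcal{Q}^{\text{bin}}_n$, I would bound the immediate cost via the Lipschitz property of $f_n$, bound the expected cost-to-go by combining the hypothesis $\mathcal{Q}^{\text{bin}}_m \le \mathcal{Q}_m + K_m s$ with the Lipschitz continuity of $\mathcal{Q}_m$ and of the worst-case expectation in $\mathbf{x}_n$, and absorb the resulting error into $K_n = Lc + \sum_{m\in\mathcal{C}(n)} p_{nm}\big(K_m + \text{(Lipschitz constants)}\cdot c\big)$. Because $\mathcal{T}$ has finitely many stages, unrolling the recursion gives a finite $K := K_1$, and evaluating at the root yields $z^*_{\text{bin}} = \mathcal{Q}^{\text{bin}}_1(\mathbf{x}_0,\mathbf{1}) \le \mathcal{Q}_1(\mathbf{x}_0,\mathbf{1}) + Ks = z^* + Ks$.

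The main obstacle is the feasibility-preserving rounding together with the Lipschitz continuity of the value functions in the continuous state. I expect the delicate points to be (i) verifying relatively complete recourse so that the LP duals, and hence the Lipschitz constants and multipliers $\bm\psi_m$, are uniformly bounded over all reachable states, and (ii) controlling the fact that perturbing $\mathbf{x}_n$ changes not only the right-hand sides at the children but also the ambiguity sets $\mathcal{P}_m(\mathbf{x}_n)$ themselves; both are handled through the LP/dual reformulations above, but care is needed to keep the constants $K_n$ uniform and to ensure they do not blow up as they accumulate across the $T$ stages.
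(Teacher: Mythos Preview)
Your proposal is correct and follows the same overall architecture as the paper's proof: the lower bound via restriction of the feasible region, and the upper bound via backward induction that rounds an optimal solution to a nearby grid point, controls the immediate-cost perturbation through the Lipschitz assumption on $f_n$, and propagates the error through a Lipschitz property of the value function together with the dual representation of the worst-case expectation (Proposition~\ref{prop:exp-rep}). The technical implementation differs slightly. The paper introduces an auxiliary approximation $\mathcal{Q}^{\mathbf{x,y}}_n$ that binarizes \emph{both} $\mathbf{x}_n$ and $\mathbf{y}_n$ (so that $z^*_{\text{bin}} \le \mathcal{Q}^{\mathbf{x,y}}_1(\mathbf{x}_0,\mathbf{1})$) and then bounds $\mathcal{Q}^{\mathbf{x,y}}_n - \mathcal{Q}_n$; working on a purely binary domain lets it obtain Lipschitz continuity of the value function from the Blair--Jeroslow proximity result for integer programs, and the Lipschitz dependence of the DRO term on $\mathbf{x}_n$ from Hoffman's lemma. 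You instead work directly with $\mathcal{Q}^{\text{bin}}_n$ versus $\mathcal{Q}_n$, obtaining Lipschitz continuity by freezing the discrete variables and invoking LP right-hand-side sensitivity/Hoffman bounds. Both routes are valid; your argument avoids the detour through the extra $\mathbf{y}$-binarization, while the paper's use of Blair--Jeroslow sidesteps the need to argue uniform boundedness of LP duals across all discrete configurations.
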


\begin{proof}[\textbf{Proof of Theorem~\ref{thm:expansion}}\label{apdx:thm-expansion}]
The binary expansion approximation on \(\mathbf{x}_n\) is equivalent to incorporating a new constraint \(\mathbf{x}_n \in S(\mathbf{x}_n) := \big\{z \in \mathbb{R}^{\text{dim}(\mathbf{x}_n)}: z_n \in \{L_n, L_n+s, \ldots, \min(U_n, L_n + (2^{E_n} - 1)s)\}, \forall n \in [N]\big\}\), where \(\bm{L}, \bm{U}\) are lower and upper bounds of \(\mathbf{x}_n\) and \(E_n = \big\lfloor \log_{2}\left(\frac{U_n-L_n}{s}\right)\big\rfloor+1\). Then, the binary expansion is a conservative approximation of the multi-stage DRO model and so \(z^* \leq z^*_{\text{bin}}\). In addition, we note that \(S(\mathbf{x}_n)\) is an \(s\)-net for the hypercube \([\bm{L}, \bm{U}]\). That is, for any \(\mathbf{x}_n \in [\bm{L}, \bm{U}]\), there exists a \(\bm{z} \in S(\mathbf{x}_n)\) such that \(\|\mathbf{x}_n - \bm{z}\|_\infty \leq s\).

To prove the second inequality, we use mathematical induction. More specifically, we will focus on a further conservative approximation that approximates both \(\mathbf{x}_n\) and \(\mathbf{y}_n\) through binary expansion. First ({\bf base case}), for each leaf node \(n\) and any fixed \(\mathbf{x}_{p(n)}\) and \(\Tilde{\mathbf{a}}_{n}\), recall that
\begin{align*}
\mathcal{Q}_{n}(\mathbf{x}_{p(n)},\Tilde{\mathbf{a}}_{n})= \min_{\mathbf{x}_{n},\mathbf{y}_{n}} \quad & f_{n}(\mathbf{x}_{n},\mathbf{y}_{n},\Tilde{\mathbf{a}}_{n})\\
\text{s.t.}\quad & A_n \mathbf{x}_{n} + W_{n}\mathbf{y}_{n} + C_{n} \mathbf{x}_{p(n)} + D_{n}\Tilde{\mathbf{a}}_{n} \geq h_{n}.
\end{align*}
We let \(\mathcal{Q}^{\mathbf{x}}_{n}(\mathbf{x}_{p(n)},\Tilde{\mathbf{a}}_{n})\) denote the optimal value of this formulation after adding the constraint \(\mathbf{x}_n \in S(\mathbf{x}_n)\). Now suppose that we apply the binary expansion on both \(\mathbf{x}_n\) and \(\mathbf{y}_n\), that is, add the constraints \(\mathbf{x}_n \in S(\mathbf{x}_n)\) and \(\mathbf{y}_n \in S(\mathbf{y}_n)\) to the above formulation and denote by \(\mathcal{Q}^{\mathbf{x,y}}_{n}(\mathbf{x}_{p(n)},\Tilde{\mathbf{a}}_{n})\) the ensuing optimal value. Then, \(\mathcal{Q}_{n}(\mathbf{x}_{p(n)},\Tilde{\mathbf{a}}_{n}) \leq \mathcal{Q}^{\mathbf{x}}_{n}(\mathbf{x}_{p(n)},\Tilde{\mathbf{a}}_{n}) \leq \mathcal{Q}^{\mathbf{x,y}}_{n}(\mathbf{x}_{p(n)},\Tilde{\mathbf{a}}_{n})\). Likewise, we have \(z^*_{\text{bin}} \equiv \mathcal{Q}^{\mathbf{x}}_{1}(\mathbf{x}_{0},\mathbf{1}) \leq \mathcal{Q}^{\mathbf{x,y}}_{1}(\mathbf{x}_{0},\mathbf{1})\).

But \(S(\mathbf{x}_n) \times S(\mathbf{y}_n)\) forms an \(s\)-net. Then, for an optimal solution \((\mathbf{x}^*_n, \mathbf{y}^*_n)\) to the above formulation, there exists an \((\overline{\mathbf{x}}_n, \overline{\mathbf{y}}_n) \in S(\mathbf{x}_n) \times S(\mathbf{y}_n)\) such that
\begin{equation*}
    \begin{cases}
        A_n \overline{\mathbf{x}}_{n} + W_{n}\overline{\mathbf{y}}_{n} + C_{n} \mathbf{x}_{p(n)} + D_{n}\Tilde{\mathbf{a}}_{n} \geq h_{n} \\[0.5em]
    \|(\overline{\mathbf{x}}_n, \overline{\mathbf{y}}_n) - (\mathbf{x}^*_n, \mathbf{y}^*_n)\|_{\infty} \leq s.
    \end{cases}
\end{equation*}
It follows that 
\begin{align*}
    \mathcal{Q}^{\mathbf{x,y}}_{n}(\mathbf{x}_{p(n)},\Tilde{\mathbf{a}}_{n}) \ \leq & \ f_n(\overline{\mathbf{x}}_{n}, \overline{\mathbf{y}}_{n}, \tilde{\mathbf{a}}_n) \\
\leq & \ f_n(\mathbf{x}^*_{n}, \mathbf{y}^*_{n}, \tilde{\mathbf{a}}_n) + L\|(\overline{\mathbf{x}}_n, \overline{\mathbf{y}}_n) - ({\mathbf{x}}_n, {\mathbf{y}}_n)\|_{\infty} \\
\leq & \ \mathcal{Q}_{n}(\mathbf{x}_{p(n)},\Tilde{\mathbf{a}}_{n}) + Ls,
\end{align*}
where the first inequality follows from the sub-optimality of \((\overline{\mathbf{x}}_n, \overline{\mathbf{y}}_n)\) and the second inequality follows from the Lipschitz continuity of \(f_n\). Furthermore, since \(f_n\) is linear and decision variables \((\mathbf{x}_n,\mathbf{y}_n)\) are binary in the formulation of \(\mathcal{Q}^{\mathbf{x,y}}_{n}(\mathbf{x}_{p(n)},\Tilde{\mathbf{a}}_{n})\), the proximity result of binary integer program (Theorem 2.2 in~\cite{blair1977value}) yields, for some \(E_n \geq 0\)
\begin{equation*}
    |\mathcal{Q}^{\mathbf{x,y}}_{n}(\mathbf{x}_{p(n)},\Tilde{\mathbf{a}}_{n}) - \mathcal{Q}^{\mathbf{x,y}}_{n}(\mathbf{x}'_{p(n)},\Tilde{\mathbf{a}}_{n})| \leq E_n\|\mathbf{x}_{p(n)} - \mathbf{x}'_{p(n)}\|_{\infty}, \; \forall \mathbf{x}_{p(n)}, \mathbf{x}'_{p(n)}.
\end{equation*}
Second ({\bf induction step}), for each node \(n \in \mathcal{S}_t\) with \(t \leq T-1\) and any fixed \(\mathbf{x}_{p(n)}\) and \(\Tilde{\mathbf{a}}_{n}\), recall that
\begin{align*}
\mathcal{Q}_{n}(\mathbf{x}_{p(n)},\Tilde{\mathbf{a}}_{n})= \min_{\mathbf{x}_{n},\mathbf{y}_{n}} \quad & f_{n}(\mathbf{x}_{n},\mathbf{y}_{n},\Tilde{\mathbf{a}}_{n}) + \sum_{m \in \mathcal{C}(n)} p_{nm} \sup_{\mathbb{P} \in \mathcal{P}_m(\mathbf{x}_n)} \mathbb{E}_{\mathbb{P}}\left[ \mathcal{Q}_{m}(\mathbf{x}_{n},\Tilde{\mathbf{a}}_{m})\right]\\
\text{s.t.}\quad & A_n \mathbf{x}_{n} + W_{n}\mathbf{y}_{n} + C_{n} \mathbf{x}_{p(n)} + D_{n}\Tilde{\mathbf{a}}_{n} \geq h_{n}
\end{align*}
and assume that, for all \(m \in \mathcal{C}(n)\), \(\mathbf{x}_n\), \(\mathbf{x}'_n\), and \(\Tilde{\mathbf{a}}_{m}\), 
\begin{enumerate}
\item \(\mathcal{Q}^{\mathbf{x,y}}_{m}(\mathbf{x}_{n},\Tilde{\mathbf{a}}_{m}) \leq \mathcal{Q}_{m}(\mathbf{x}_{n},\Tilde{\mathbf{a}}_{m}) + L_ms\) for some \(L_m \geq 0\);
\item \(|\mathcal{Q}^{\mathbf{x,y}}_{m}(\mathbf{x}_{n},\Tilde{\mathbf{a}}_{m}) - \mathcal{Q}^{\mathbf{x,y}}_{m}(\mathbf{x}'_{n},\Tilde{\mathbf{a}}_{m})| \leq E_m \|\mathbf{x}_{n} - \mathbf{x}'_{n}\|_\infty\) for some \(E_m \geq 0\).
\end{enumerate}
Then, for fixed \(\mathbf{x}_n\) and \(\mathbb{P} \in \mathcal{P}_m(\mathbf{x}_n)\), assumption (i) implies that
\begin{equation*}
    \mathbb{E}_{\mathbb{P}}\left[\mathcal{Q}^{\mathbf{x,y}}_{m}(\mathbf{x}_{n},\Tilde{\mathbf{a}}_{m})\right] \leq \mathbb{E}_{\mathbb{P}}\left[\mathcal{Q}_{m}(\mathbf{x}_{n},\Tilde{\mathbf{a}}_{m})\right] + L_ms.
\end{equation*}
Driving both sides to supremum with respect to \(\mathbb{P}\) and noting that \(\sum_{m \in \mathcal{C}(n)} p_{nm} \equiv 1\), we have
\begin{equation*}
    \sum_{m \in \mathcal{C}(n)} p_{nm} \sup_{\mathbb{P} \in \mathcal{P}_m(\mathbf{x}_n)} \mathbb{E}_{\mathbb{P}}\left[ \mathcal{Q}^{\mathbf{x,y}}_{m}(\mathbf{x}_{n},\Tilde{\mathbf{a}}_{m})\right] \leq \sum_{m \in \mathcal{C}(n)} p_{nm} \sup_{\mathbb{P} \in \mathcal{P}_m(\mathbf{x}_n)} \mathbb{E}_{\mathbb{P}}\left[ \mathcal{Q}_{m}(\mathbf{x}_{n},\Tilde{\mathbf{a}}_{m})\right] + \left(\sum_{m\in \mathcal{C}(n)}p_{nm}L_m\right)s.
\end{equation*}
In addition, assumption (ii) implies that, for any \(\mathbf{x}_n\), \(\mathbf{x}'_n\), and \(m \in \mathcal{C}(n)\),
\begin{align*}
& \ \left|\sup_{\mathbb{P} \in \mathcal{P}_m(\mathbf{x}_n)} \mathbb{E_P}\left[\mathcal{Q}^{\mathbf{x,y}}_{m}(\mathbf{x}_{n},\Tilde{\mathbf{a}}_{m})\right] - \sup_{\mathbb{P} \in \mathcal{P}_m(\mathbf{x}'_n)} \mathbb{E_P}\left[\mathcal{Q}^{\mathbf{x,y}}_{m}(\mathbf{x}'_{n},\Tilde{\mathbf{a}}_{m})\right]\right| \\
\ \leq & \ \left|\sup_{\mathbb{P} \in \mathcal{P}_m(\mathbf{x}_n)} \mathbb{E_P}\left[\mathcal{Q}^{\mathbf{x,y}}_{m}(\mathbf{x}_{n},\Tilde{\mathbf{a}}_{m})\right] - \sup_{\mathbb{P} \in \mathcal{P}_m(\mathbf{x}'_n)} \mathbb{E_P}\left[\mathcal{Q}^{\mathbf{x,y}}_{m}(\mathbf{x}_{n},\Tilde{\mathbf{a}}_{m})\right]\right| + E_m\|\mathbf{x}_n - \mathbf{x}'_n\|_{\infty} \\
\leq & \ (H_m + E_m)\|\mathbf{x}_n - \mathbf{x}'_n\|_{\infty}
\end{align*}
for some \(H_m \geq 0\), where the second inequality follows from the Hoffman's Lemma. Then, for an optimal solution \((\mathbf{x}^*_n, \mathbf{y}^*_n)\) to the above formulation, there exists a feasible \((\overline{\mathbf{x}}_n, \overline{\mathbf{y}}_n) \in S(\mathbf{x}_n) \times S(\mathbf{y}_n)\) such that \(\|(\overline{\mathbf{x}}_n, \overline{\mathbf{y}}_n) - (\mathbf{x}^*_n, \mathbf{y}^*_n)\|_{\infty} \leq s\). It follows that
\begin{align*}
\mathcal{Q}^{\mathbf{x,y}}_{n}(\mathbf{x}_{p(n)},\Tilde{\mathbf{a}}_{n}) \ \leq & \ f_n(\overline{\mathbf{x}}_{n}, \overline{\mathbf{y}}_{n}, \tilde{\mathbf{a}}_n) + \sum_{m \in \mathcal{C}(n)} p_{nm} \sup_{\mathbb{P} \in \mathcal{P}_m(\overline{\mathbf{x}}_n)} \mathbb{E}_{\mathbb{P}}\left[ \mathcal{Q}^{\mathbf{x,y}}_{m}(\overline{\mathbf{x}}_{n},\Tilde{\mathbf{a}}_{m})\right] \\
\leq & \ f_n(\mathbf{x}^*_{n}, \mathbf{y}^*_{n}, \tilde{\mathbf{a}}_n) + Ls + \sum_{m \in \mathcal{C}(n)} p_{nm} \left(\sup_{\mathbb{P} \in \mathcal{P}_m(\mathbf{x}^*_n)} \mathbb{E}_{\mathbb{P}}\left[ \mathcal{Q}^{\mathbf{x,y}}_{m}(\mathbf{x}^*_{n},\Tilde{\mathbf{a}}_{m})\right] + (H_m + E_m)s \right) \\
\leq & \ f_n(\mathbf{x}^*_{n}, \mathbf{y}^*_{n}, \tilde{\mathbf{a}}_n) + \sum_{m \in \mathcal{C}(n)} p_{nm} \sup_{\mathbb{P} \in \mathcal{P}_m(\mathbf{x}^*_n)} \mathbb{E}_{\mathbb{P}}\left[ \mathcal{Q}_{m}(\mathbf{x}^*_{n},\Tilde{\mathbf{a}}_{m})\right] + (L + H_m + E_m)s \\
\leq & \ \mathcal{Q}_{n}(\mathbf{x}_{p(n)},\Tilde{\mathbf{a}}_{n}) + (L + H_m + E_m)s.
\end{align*}
Finally, since function \(\displaystyle\sup_{\mathbb{P} \in \mathcal{P}_m(\mathbf{x}_n)} \mathbb{E}_{\mathbb{P}}\left[ \mathcal{Q}^{\mathbf{x,y}}_{m}(\mathbf{x}_{n},\Tilde{\mathbf{a}}_{m})\right]\) is defined on a binary domain due to the binary expansion of \((\mathbf{x}_n,\mathbf{y}_n)\), its epigraph admits a polyhedral description. Then, the formulation of \(\mathcal{Q}^{\mathbf{x,y}}_{n}(\mathbf{x}_{p(n)},\Tilde{\mathbf{a}}_{n})\) can be written as a binary integer program. Hence, we can once again invoke the proximity result (Theorem 2.2 in~\cite{blair1977value}) to obtain
\begin{equation*}
    |\mathcal{Q}^{\mathbf{x,y}}_{n}(\mathbf{x}_{p(n)},\Tilde{\mathbf{a}}_{n}) - \mathcal{Q}^{\mathbf{x,y}}_{n}(\mathbf{x}'_{p(n)},\Tilde{\mathbf{a}}_{n})| \leq E_n\|\mathbf{x}_{p(n)} - \mathbf{x}'_{p(n)}\|_{\infty}, \; \forall \mathbf{x}_{p(n)}, \mathbf{x}'_{p(n)}
\end{equation*}
for some \(E_n \geq 0\). This completes the proof.
\end{proof}
Theorem~\ref{thm:expansion} implies that the binary expansion approximation with a precision \(s\) of the multi-stage DRO model converges linearly to the true optimal value as \(s\) decreases towards zero.
\subsection{Technical Proofs} \label{apdx:proofs}
\begin{proof}[\textbf{Proof of Proposition~\ref{prop:exp-rep}}]
We express the worst-case expectation with respect to \(\mathbb{P}\) as an optimization formulation:
\begin{subequations}
\begin{align}
\max_{\mathbf{p}_{\mathbf{a}_{m}}} \quad & \sum_{\mathbf{a}_{m} \in \mathcal{A}_{m}} p_{\mathbf{a}_{m}} \mathcal{Q}_{m}(\mathbf{x}_{n},\mathbf{a}_{m}) \\
        \text{s.t.} \quad &\sum\limits_{\mathbf{a}_{m} \in \mathcal{A}_{m}}p_{\mathbf{a}_{m}} \mathbf{a}_{m} \leq  \beta_{m}\mathbf{x}_{n} + \gamma_{m} & [\psi_{m}] \\
         &\sum\limits_{\mathbf{a}_{m}\in \mathcal{A}_{m}}p_{\mathbf{a}_{m}}  =  1 & [\phi_{m}] \label{eq:Ambiguity:LP-con-1} \\
         & p_{\mathbf{a}_{m}} \geq 0, \quad \forall \ \mathbf{a}_{m} \in \mathcal{A}_{m}. & \label{eq:Ambiguity:LP-con-2}
\end{align}
\label{eq:Ambiguity:LP}
\end{subequations}
In the above formulation, we encode \(\mathbb{P}\) using a finite-dimensional vector \(p_{\mathbf{a}_{m}}\) because \(\tilde{\mathbf{a}}_{m}\) is a discrete random vector with a finite support \(\mathcal{A}_m \subseteq \{0, 1\}^{|\mathcal{L}|}\). This renders formulation~\eqref{eq:Ambiguity:LP} a finite-dimensional linear program and the conclusion follows from the standard dual formulation, where $\bm{\psi}_{m}$ and $\bm{\phi}_{m}$ are dual variables associated with constraints~\eqref{eq:Ambiguity:LP-con-1}--\eqref{eq:Ambiguity:LP-con-2}, respectively.
\end{proof}
\begin{proof}[\textbf{Proof of Proposition~\ref{prop:separation}}]
By construction, we recast~\eqref{eq:MaxCut} as
\begin{align*}
\max_{\mathbf{a} \in \mathcal{A}_{m}}\{\mathcal{Q}_{m}(\mathbf{x}_{n},\mathbf{a}) - \mathbf{a}^{\top} \bm{\psi}_{m}\} \ = & \ \max_{\mathbf{a} \in \mathcal{A}_{m}}\left\{\max_{h \in \mathcal{H}_m} \left\{ (\pi^{h})^{\top}\mathbf{x}_{n} + (\tau^{h})^{\top}\mathbf{a} + \omega^{h} \right\} - \mathbf{a}^{\top} \bm{\psi}_{m}\right\} \\
= & \ \max_{h \in \mathcal{H}_m}\max_{\mathbf{a} \in \mathcal{A}_{m}}\left\{ (\pi^{h})^{\top}\mathbf{x}_{n} + (\tau^{h}-\bm{\psi}_{m})^{\top}\mathbf{a} + \omega^{h}\right\}.
\end{align*}
For each \(h \in \mathcal{H}_m\), the inner maximization problem with respect to \(\mathbf{a}\) is a linear integer program with an individual cardinality constraint. Hence, it can be solved by sorting the entries \((\tau^h - \bm\psi_m)\), as done in each for loop of Algorithm~\ref{alg:separation}. Therefore, Algorithm~\ref{alg:separation} correctly evaluates the optimal value of~\eqref{eq:MaxCut}. Finally, since Algorithm~\ref{alg:separation} conducts \(|\mathcal{H}_m|\) for loops and each loop sorts at most \(|\mathcal{L}|\) entries, the running time is \(O(|\mathcal{H}_m||\mathcal{L}|\log(|\mathcal{L}|))\).
\end{proof}

\begin{proof}[\textbf{Proof of Proposition~\ref{prop:worst-dist}}]
First, formulations~\eqref{model:Q_min} and~\eqref{model:Q_min-reduced} have the same optimal value by construction of the delayed constraint generation. The equality in~\eqref{wc-note-1} follows. Second, we take the dual of formulation~\eqref{model:Q_min-reduced} to obtain
\begin{align*}
    \sup_{\mathbb{P} \in \mathcal{P}_m(\mathbf{x}_n)} \mathbb{E}_{\mathbb{P}}\left[ \mathcal{Q}_{m}(\mathbf{x}_{n},\Tilde{\mathbf{a}}_{m})\right] \ = \ \max_{\bm{\lambda}\geq0} \quad & \sum_{\mathbf{a}^{h} \in \mathcal{A}^*_m} \lambda^h \mathcal{Q}_{m}(\mathbf{x}_{n},\mathbf{a}^{h}) \nonumber \\
    \text{s.t.} \quad & \sum_{\mathbf{a}^{h} \in \mathcal{A}^*_m}\lambda^h \mathbf{a}^{h} \leq \beta_{m}\mathbf{x}_{n} + \gamma_{m}, & \\
    &\sum_{\mathbf{a}^{h}\in \mathcal{A}^*_m} \lambda^h = 1, & 
\end{align*}
The last two constraints show that a dual optimal solution \(\lambda^h\) defines a probability distribution \(\mathbb{P}^*\) for \(\tilde{\mathbf{a}}_m\) supported on \(\mathcal{A}^*_m \subseteq \mathcal{A}_m\) and the first constraint shows that \(\mathbb{P}^* \in \mathcal{P}_m(\mathbf{x}_n)\). Finally, we complete the proof by noticing that
\begin{equation*}
    \sum_{\mathbf{a}^{h} \in \mathcal{A}^*_m} \lambda^h \mathcal{Q}_{m}(\mathbf{x}_{n},\mathbf{a}^{h}) = \mathbb{E}_{\mathbb{P}^*}[\mathcal{Q}_m(\mathbf{x}_n, \tilde{\mathbf{a}}_m)].
\end{equation*}
\end{proof}
\begin{proof}[\textbf{Proof of Proposition~\ref{prop:integer-cut}}]
It is clear that 
\begin{equation*}
\underline{\mathcal{Q}}_n(\mathbf{x}_{p(n)}, \tilde{\mathbf{a}}_n) \geq \underline{\mathcal{Q}}_n(\hat{\mathbf{x}}_{p(n)}, \hat{\mathbf{a}}_n) - \left(\underline{\mathcal{Q}}_n(\hat{\mathbf{x}}_{p(n)}, \hat{\mathbf{a}}_n) - L_n\right) \left(\|\mathbf{x}_{p(n)} - \hat{\mathbf{x}}_{p(n)}\|_1 + \|\tilde{\mathbf{a}}_n - \hat{\mathbf{a}}_n\|_1\right).
\end{equation*}
The conclusion follows from noticing that \(\|\mathbf{x}_{p(n)} - \hat{\mathbf{x}}_{p(n)}\|_1 = \mathbf{1}^{\top}\mathbf{x}_{p(n)} + \mathbf{1}^{\top}\hat{\mathbf{x}}_{p(n)} - 2\hat{\mathbf{x}}_{p(n)}^{\top}\mathbf{x}_{p(n)}\) and \(\|\tilde{\mathbf{a}}_n - \hat{\mathbf{a}}_n\|_1 = \mathbf{1}^{\top}\tilde{\mathbf{a}}_n + \mathbf{1}^{\top}\hat{\mathbf{a}}_n - 2\hat{\mathbf{a}}_n^{\top}\tilde{\mathbf{a}}_n\).
\end{proof}
\begin{proof}[\textbf{Proof of Proposition~\ref{prop:integer-cut-tight}}]
First, equality~\eqref{eq:tight-cut} holds for the integer optimality cut because
\begin{align*}
& \ \pi^{\top}\hat{\mathbf{x}}_{p(n)} + \tau^{\top}\hat{\mathbf{a}}_n + \omega \\
= \ & \ (\underline{\mathcal{Q}}_n(\hat{\mathbf{x}}_{p(n)}, \hat{\mathbf{a}}_n) - L_n)(2\hat{\mathbf{x}}_{p(n)} - \mathbf{1})^{\top} \hat{\mathbf{x}}_{p(n)} + (\underline{\mathcal{Q}}_n(\hat{\mathbf{x}}_{p(n)}, \hat{\mathbf{a}}_n) - L_n)(2\hat{\mathbf{a}}_n - \mathbf{1})^{\top} \hat{\mathbf{a}}_n + \underline{\mathcal{Q}}_n(\hat{\mathbf{x}}_{p(n)}, \hat{\mathbf{a}}_n) \\
& \ - (\underline{\mathcal{Q}}_n(\hat{\mathbf{x}}_{p(n)}, \hat{\mathbf{a}}_n) - L_n) (\mathbf{1}^{\top}\hat{\mathbf{x}}_{p(n)} + \mathbf{1}^{\top}\hat{\mathbf{a}}_n) \\
= \ & \ 2 (\underline{\mathcal{Q}}_n(\hat{\mathbf{x}}_{p(n)}, \hat{\mathbf{a}}_n) - L_n) (\hat{\mathbf{x}}_{p(n)} - \mathbf{1})^{\top}\hat{\mathbf{x}}_{p(n)} + 2 (\underline{\mathcal{Q}}_n(\hat{\mathbf{x}}_{p(n)}, \hat{\mathbf{a}}_n) - L_n) (\hat{\mathbf{a}}_n - \mathbf{1})^{\top}\hat{\mathbf{a}}_n + \underline{\mathcal{Q}}_n(\hat{\mathbf{x}}_{p(n)}, \hat{\mathbf{a}}_n) \\
= \ & \ \underline{\mathcal{Q}}_n(\hat{\mathbf{x}}_{p(n)}, \hat{\mathbf{a}}_n),
\end{align*}
where the last equality is because \((x - \mathbf{1})^{\top}x = 0\) for any binary vector \(x\). Second, the validity of the Lagrangian cut implies that \(\underline{\mathcal{Q}}_n(\hat{\mathbf{x}}_{p(n)},\hat{\mathbf{a}}_n) \geq \pi^{\top}\hat{\mathbf{x}}_{p(n)} + \tau^{\top}\hat{\mathbf{a}}_n + \mathcal{L}_n(\pi, \tau)\) for all \((\pi, \tau)\). But this inequality holds with equality for the choice of \((\pi, \tau)\) in the integer optimality cut, implying that \((\pi, \tau) \in \text{argmax}_{\pi, \tau}\{\pi^{\top}\hat{\mathbf{x}}_{p(n)} + \tau^{\top}\hat{\mathbf{a}}_n + \mathcal{L}_n(\pi, \tau)\}\). Finally, for notation brevity we denote \[\overline{f}_{n}(\mathbf{x}_{n},\mathbf{y}_{n},\mathbf{w}_n) := f_{n}(\mathbf{x}_{n},\mathbf{y}_{n},\mathbf{w}_n) + \sum_{m \in \mathcal{C}(n)} p_{nm} \sup_{\mathbb{P}\in \mathcal{P}_m(\mathbf{x}_n)} \mathbb{E_P}\left[ \underline{\mathcal{Q}}_m(\mathbf{x}_{n},\Tilde{\mathbf{a}}_{m})\right]\]. Then, 
\begin{align*}
& \ \mathcal{L}_n(\pi, \tau) + \pi^{\top}\hat{\mathbf{x}}_{p(n)} + \tau^{\top}\hat{\mathbf{a}}_n \\
= \ & \min_{\substack{\mathbf{x}_n, \mathbf{y}_n, \\ \mathbf{r}_n,\mathbf{w}_n: \text{\eqref{model:Q_n-con}, \eqref{binary-con}}}} \ \overline{f}_{n}(\mathbf{x}_n,\mathbf{y}_n,\mathbf{w}_n) - \pi^{\top}(\mathbf{r}_{n} - \hat{\mathbf{x}}_{p(n)}) - \tau^{\top}(\mathbf{w}_{n} - \hat{\mathbf{a}}_n)\\
= \ & \min_{\substack{\mathbf{x}_n, \mathbf{y}_n, \\ \mathbf{r}_n,\mathbf{w}_n: \text{\eqref{model:Q_n-con}, \eqref{binary-con}}}} \ \overline{f}_{n}(\mathbf{x}_{n},\mathbf{y}_{n},\mathbf{w}_n) - (\underline{\mathcal{Q}}_n(\hat{\mathbf{x}}_{p(n)}, \hat{\mathbf{a}}_n) - L_n) (2\hat{\mathbf{x}}_{p(n)} - \mathbf{1})^{\top}(\mathbf{r}_{n} - \hat{\mathbf{x}}_{p(n)}) \\
& \hspace{6em} - (\underline{\mathcal{Q}}_n(\hat{\mathbf{x}}_{p(n)}, \hat{\mathbf{a}}_n) - L_n) (2\hat{\mathbf{a}}_n - \mathbf{1})^{\top}(\mathbf{w}_{n} - \hat{\mathbf{a}}_n)\\
= \ & \min_{\substack{\mathbf{x}_n, \mathbf{y}_n, \\ \mathbf{r}_n,\mathbf{w}_n: \text{\eqref{model:Q_n-con}, \eqref{binary-con}}}} \ \overline{f}_{n}(\mathbf{x}_{n},\mathbf{y}_{n},\mathbf{w}_n) + (\underline{\mathcal{Q}}_n(\hat{\mathbf{x}}_{p(n)}, \hat{\mathbf{a}}_n) - L_n) \|\hat{\mathbf{x}}_{p(n)} - \mathbf{r}_{n}\|_1 \\
& \hspace{6em} + (\underline{\mathcal{Q}}_n(\hat{\mathbf{x}}_{p(n)}, \hat{\mathbf{a}}_n) - L_n) \|\hat{\mathbf{a}}_n - \mathbf{w}_{n}\|_1 \\
= \ & \underline{\mathcal{Q}}_n(\hat{\mathbf{x}}_{p(n)}, \hat{\mathbf{a}}_n),
\end{align*}
where the second equality follows from the definition of \(\pi\) and \(\tau\), the third equality is because \((2x - \mathbf{1})^{\top}(y - x) = 2x^{\top}y - \mathbf{1}^{\top}x - \mathbf{1}^{\top}y = -\|x - y\|_1\) whenever \(x, y\) are binary vectors, and the last equality is because the objective function is at least \(L_n + (\underline{\mathcal{Q}}_n(\hat{\mathbf{x}}_{p(n)}, \hat{\mathbf{a}}_n) - L_n) = \underline{\mathcal{Q}}_n(\hat{\mathbf{x}}_{p(n)}, \hat{\mathbf{a}}_n)\) if either \(\mathbf{w}_{n} \neq \hat{\mathbf{a}}_n\) or \(\mathbf{r}_{n} \neq \hat{\mathbf{x}}_{p(n)}\), implying that \(\mathbf{w}_{n} = \hat{\mathbf{a}}_n\) and \(\mathbf{r}_{n} = \hat{\mathbf{x}}_{p(n)}\) at optimum. It follows that \(\mathcal{L}_n(\pi, \tau) = \underline{\mathcal{Q}}_n(\hat{\mathbf{x}}_{p(n)}, \hat{\mathbf{a}}_n) - \pi^{\top}\hat{\mathbf{x}}_{p(n)} - \tau^{\top}\hat{\mathbf{a}}_n = \omega\), which finishes the proof. 
\end{proof}
\subsection{Two-Stage Line Switching Model}
\label{subsec:MultiScaleDecisions}
Within the SND algorithm, the complete set of variables is decided at every stage $t \in [T]$. We can consider a more general decision setup, where a set of variables is decided at the root node (first stage), and implemented in the later stages, with other variables being decided at every stage. For exposition purposes, we will consider a problem with two-stage line switching decisions and dynamic power generation and power flow. Thus, we will only consider a set of line switching decisions for each stage $t \in [T]$ that will be used at nodes in the stage node set $\mathcal{S}_{t}$.

We can modify the current formulation by extending the state variable vector $\mathbf{x}$ for the root node at stage $t=1$ to considering all switching decisions in the planning horizon. 

Given that we  obtaining dual information of every stage switching, we propagate the dual information beyond subsequent stages to plan the switching for all stages at the first stage. We therefore, can consider the use of additional fishing variables and constraints for this propagation. In this way, in the forward pass, at the root node, we obtain the switching decisions for next stages, and when solving the sampled paths, we fix all switching decisions beyond one later stage, as done in the standard SND algorithm. At the backward step, at any node $n \in \mathcal{T}\setminus\{1\}$, we solve the problem:
\begin{subequations}
    \label{model:Q_hybrid}
    \begin{align}\min_{\mathbf{x}_{n},\mathbf{s}_{t},\mathbf{y}_{n},\psi_{m},\phi_{m},\mathbf{r},\mathbf{w}} \quad & f_{n}(\mathbf{x}_{n},\mathbf{y}_{n}) + \sum_{m \in \mathcal{C}(n)} q_{nm} \left( \psi_{m}^{\top} \beta_{m}\mathbf{x}_{n} + \psi_{m}^{\top}\gamma_{m} + \phi_{m} \right) \nonumber\\
         \text{s.t.}\quad & A_n \mathbf{x}_{n} + W_{n}\mathbf{y}_{n} + C_{n} \mathbf{x}_{p(n)} + D_{n}\Tilde{\mathbf{a}}_{n} \geq h_{n} \notag \\
         & \mathbf{r}_{n} = \mathbf{x}_{n}\\
         & \mathbf{s}_{i} = \mathbf{z}_{i}, \forall i = t,\dots,T\\
         & \mathbf{w}_{n} = \Tilde{\mathbf{a}}_{n}\\
         & \phi_{m} \geq \max_{\mathbf{a}} \left \{(\pi^{h}_{m})^{\top}\mathbf{x}_{n} + \sum_{i=t}^{T} (\eta_{m}^{h})^{\top}\mathbf{s}_{i} + (\tau_{m}^{h}-\psi_{m})^{\top}\mathbf{a} + \omega_{m}^{h} \right\}, \forall h \in \mathcal{H}.\label{cst:Q_hybrid:cuts}
    \end{align}
\end{subequations}
\subsection{Initial Upper Bound Computation}
\label{apdx:UpperBound}

Algorithm \ref{alg:SDN} requires a big-$\mathcal{M}$ coefficient that serves as an upper bound to have a valid deterministic upper bound via vertex enumeration. We can consider the worst-case realization of the line survival vector as a tighter initial upper bound. Having all failure-prone lines to fail is the worst-case realization for the uncertainty. During the first iteration of Algorithm \ref{alg:SDN}, we run a forward pass by setting all failure-prone lines in $\mathbf{a}$, unavailable. We note that we construct a scenario tree that encodes the nodal information of the ambiguity set, but does not define the values of the matrices $A,W,C,D$ or vector $h$, which are only stage-dependent. Thus, if we consider the summation of the objective over all stages in this deterministic forward pass, we can set it as the big-$\mathcal{M}$ for all nodes in the corresponding stage. Furthermore, we can employ these trial points to generate an initial set of hyperplanes to initialize the outer approximation of the cost-to-go function for all nodes in the corresponding stage. Our numerical experiments use this procedure to compute this worst-case upper bound during the first iteration of the algorithm.

\subsection{Construction of easy-to-implement policies}
\label{apdx:Policies}

\begin{description}
    \item[Topology policy] For each time unit \(t \in [T]\), the \emph{Topology} policy chooses a transmission grid topology from a set of topologies. Algorithm~\ref{alg:Topology} describes how to obtain the set of topologies through the SND algorithm.

\begin{algorithm}
\small
    \caption{Construction of topology sets for the \emph{Topology} policy}
    \label{alg:Topology}
    \begin{algorithmic}[1]
    \State {\bf Input}: optimal solutions $\{\mathbf{z}^{*}_{n}, n \in \mathcal{T}\}$ of the SND algorithm, $\text{Top}_{t} \gets \emptyset$ for all $t \in [T]$;
    \For{\(t = 1, \ldots, T\)}
        \State Sort the nodes \(\mathcal{S}_t\) in a decreasing order of their probabilities of occurrence and denote the permutation as \(<1>, \ldots, <|\mathcal{S}_t|>\);
        \For{\(n = 1, \ldots, |\mathcal{S}_t|\)}
            \If{$\text{Top}_{t} = \emptyset$ or $\min_{\mathbf{z} \in \text{Top}_{t}}\{||\mathbf{z} - \mathbf{z}_{<n>}^{*}||_{1}/\|\mathbf{z}\|_1\} \geq 0.05$}
            \State $\text{Top}_{t} \gets \text{Top}_{t} \cup \{z_{<n>}^{*}\}$;
        \EndIf
        \EndFor
    \EndFor
    \State Return $\text{Top}_{t}$ for all $t \in [T]$;
    \end{algorithmic}
\end{algorithm}
Intuitively, Algorithm~\ref{alg:Topology} greedily collects a subset of optimal grid topologies from the SND algorithm. We require that these topologies are sufficiently different from each other to obtain a diverse candidate set for the \emph{Topology} policy. Figure~\ref{fig:Topologies} depicts a set of three topologies used in this policy for the grid in Figure~\ref{fig:SolutionViz}.
\begin{figure}[ht]
\centering
\begin{subfigure}[b]{0.25\textwidth}
    \centering
    \includegraphics[width=\textwidth]{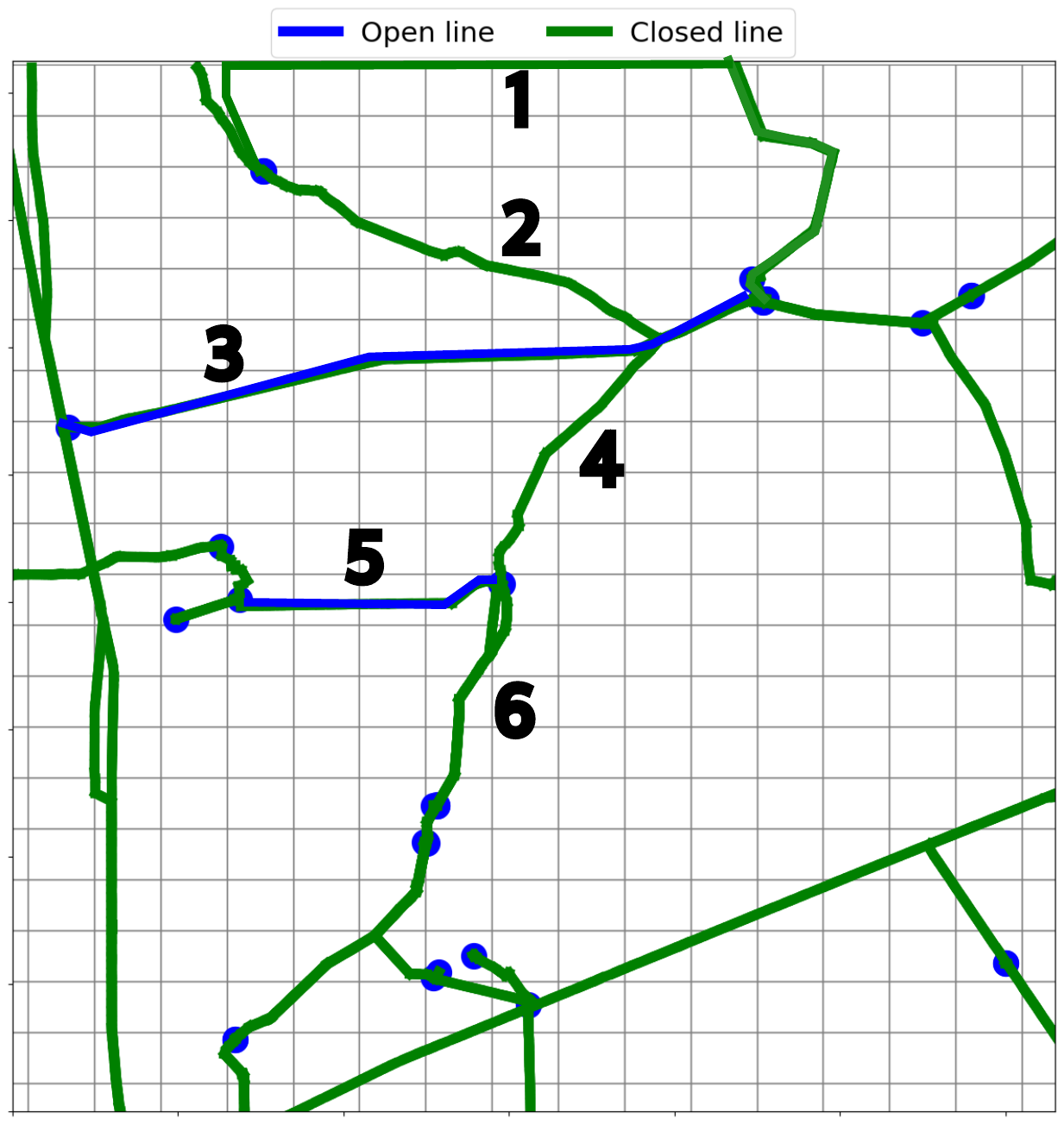}
    \caption{Topology 1}
    \label{fig:Top1}
\end{subfigure}
\begin{subfigure}[b]{0.25\textwidth}
    \centering
    \includegraphics[width=\textwidth]{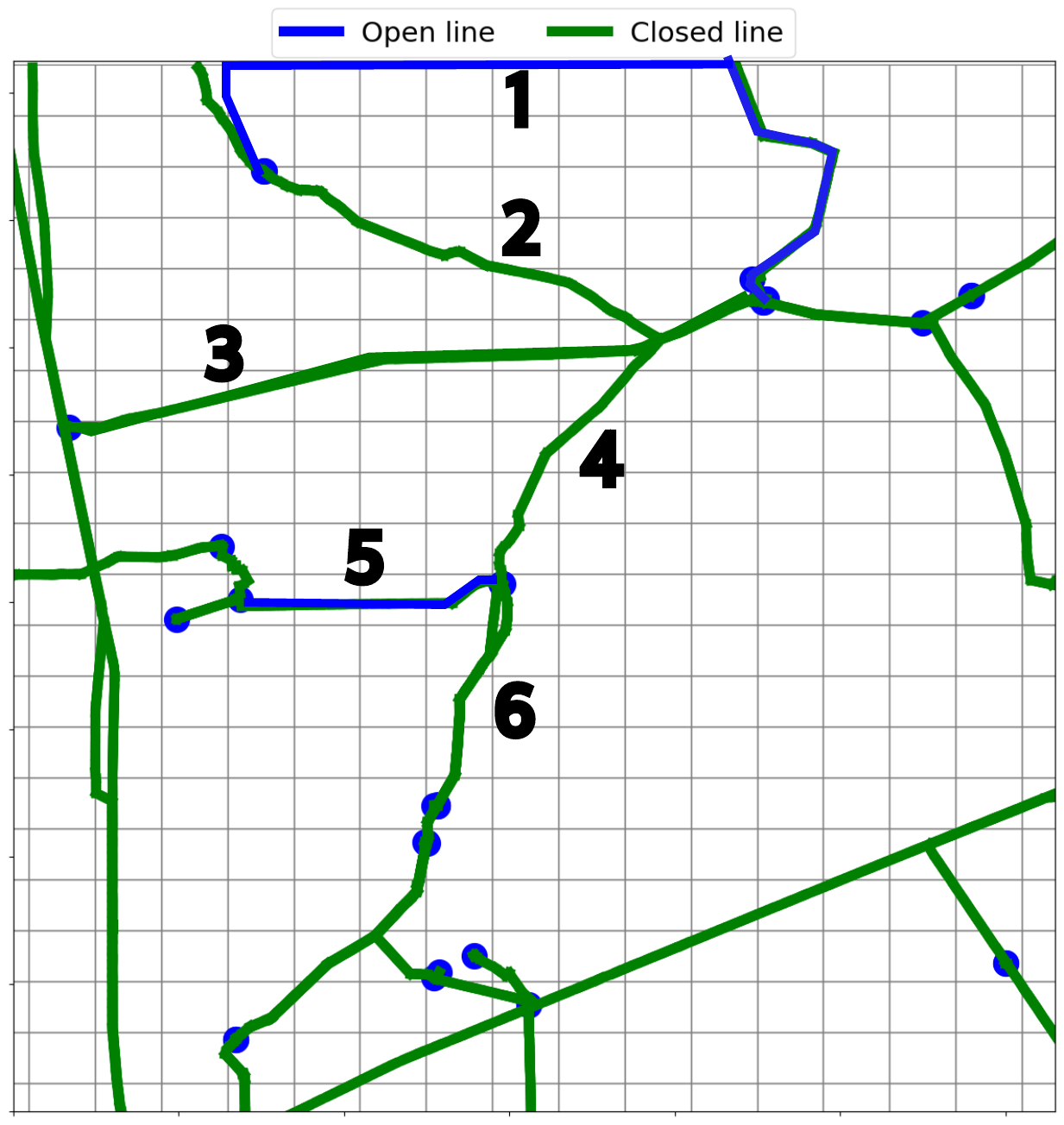}
    \caption{Topology 2}
    \label{fig:Top2}
\end{subfigure}
\begin{subfigure}[b]{0.25\textwidth}
    \centering
    \includegraphics[width=\textwidth]{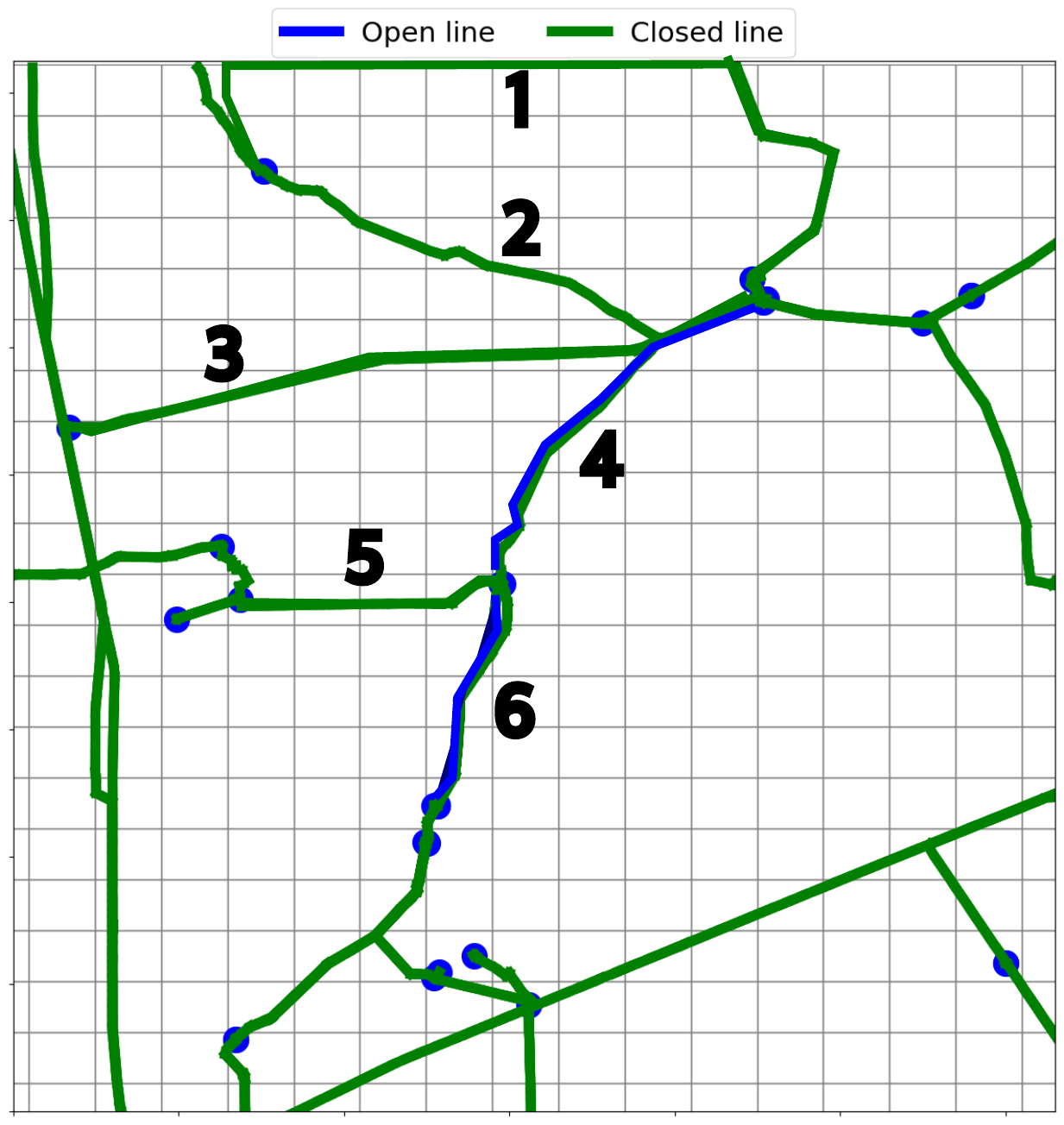}
    \caption{Topology 3}
    \label{fig:Top3}
\end{subfigure}
\caption{Example topologies for subregion in Figure \ref{fig:SolutionViz}}
\label{fig:Topologies}
\end{figure}
\item[Mapping policy] For each time unit \(t \in [T]\) and for each switchable line \(l \in \mathcal{L}\), the \emph{Mapping} policy seeks to establish a deterministic look-up table mapping to the switching decision \(z_{lt}\) from the current state \(z_{l,t-1}\) of line \(l\), as well as the availability \(\tilde{\mathbf{a}}_t^{\mathcal{L}(l)}\) of the redundant lines in \(\mathcal{L}(l)\). We establish the \emph{Mapping} policy by sampling the scenario tree in multiple replications and, in each replication, retrieving the \emph{Dynamic} policy for \(z_{lt}\) under different scenarios but the same \((z_{l,t-1}, \tilde{\mathbf{a}}_t^{\mathcal{L}(l)})\) input. Then, we round the average of these \(z_{lt}\) to either zero or one. We detail this approach in Algorithm~\ref{alg:Mapping}.
\end{description}
\begin{algorithm}
\small
    \caption{\emph{Mapping} policy for transmission line \(l\)}
    \label{alg:Mapping}
    \begin{algorithmic}[1]
    \State Input: set of redundant lines \(\mathcal{L}(l)\), number of replications \(I\);
    \For{$i = 1,\dots,I$}
        \State Sample a scenario $\omega^i$ from \(\mathcal{T}\);
        \For{$t = 1,\dots,T$}
            \State Solve formulation~\eqref{model:Q_n} pertaining to node \(n \in \mathcal{S}_t \cap \omega^i\) and state \((\mathbf{x}^i_{t-1},\tilde{\mathbf{a}}^i_t)\) with $\mathcal{Q}_{m}(\cdot, \cdot)$ replaced by $\underline{\mathcal{Q}}_{m}(\cdot,\cdot)$ and store solution $(\mathbf{x}^i_{t},\mathbf{y}^i_t)$;
            \State Sample $\tilde{\mathbf{a}}^i_{t+1}$ from the worst-case distribution $\mathbb{P}^{*}$; 
            \For{ all $S \in \{0,1\} \times 2^{\mathcal{L}(l)}$}
                \State Construct a state \((\mathbf{x}'_{t-1},\Tilde{\mathbf{a}}'_{t})\) by replacing the entries of \((z_{l,t-1}, \tilde{\mathbf{a}}^{\mathcal{L}(l)})\) in $(\mathbf{x}^i_{t-1},\Tilde{\mathbf{a}}^i_{t})$ by \(S\);
                \State Solve formulation~\eqref{model:Q_n} pertaining to \((\mathbf{x}'_{t-1},\tilde{\mathbf{a}}'_t)\) with $\mathcal{Q}_{m}(\cdot, \cdot)$ replaced by $\underline{\mathcal{Q}}_{m}(\cdot,\cdot)$ and store the line switching decision \(z^i_{lt}(S)\);
            \EndFor
        \EndFor
    \EndFor
    \State Return mapping function \(\text{Map}_{l,t}: \{0,1\} \times 2^{\mathcal{L}(l)} \rightarrow \{0,1\}\) with $\text{Map}_{l,t}(S) := \text{Round}(\sum_{i=1}^Iz^i_{lt}(S)/I)$ for all \(t \in [T]\);
    \end{algorithmic}
\end{algorithm}
\subsection{Instance details}
\label{apdx:Instance}
\begin{figure}[ht]
\centering
\begin{subfigure}[b]{0.25\textwidth}
    \centering
    \includegraphics[width=\textwidth]{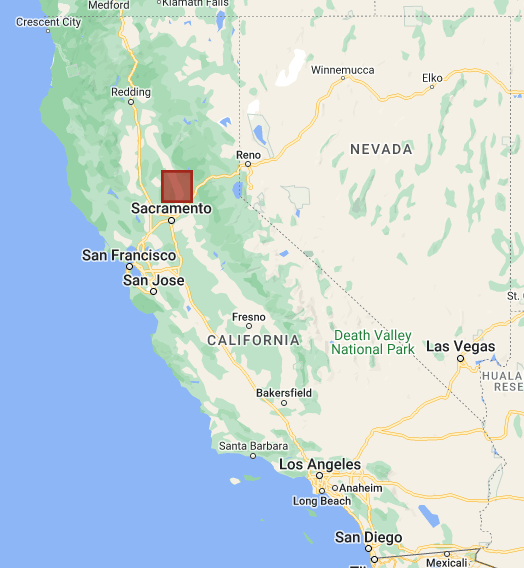}
    \caption{Selected area \label{fig:AreaMap}}
\end{subfigure}
\begin{subfigure}[b]{0.33\textwidth}
    \centering
    \includegraphics[width=\textwidth]{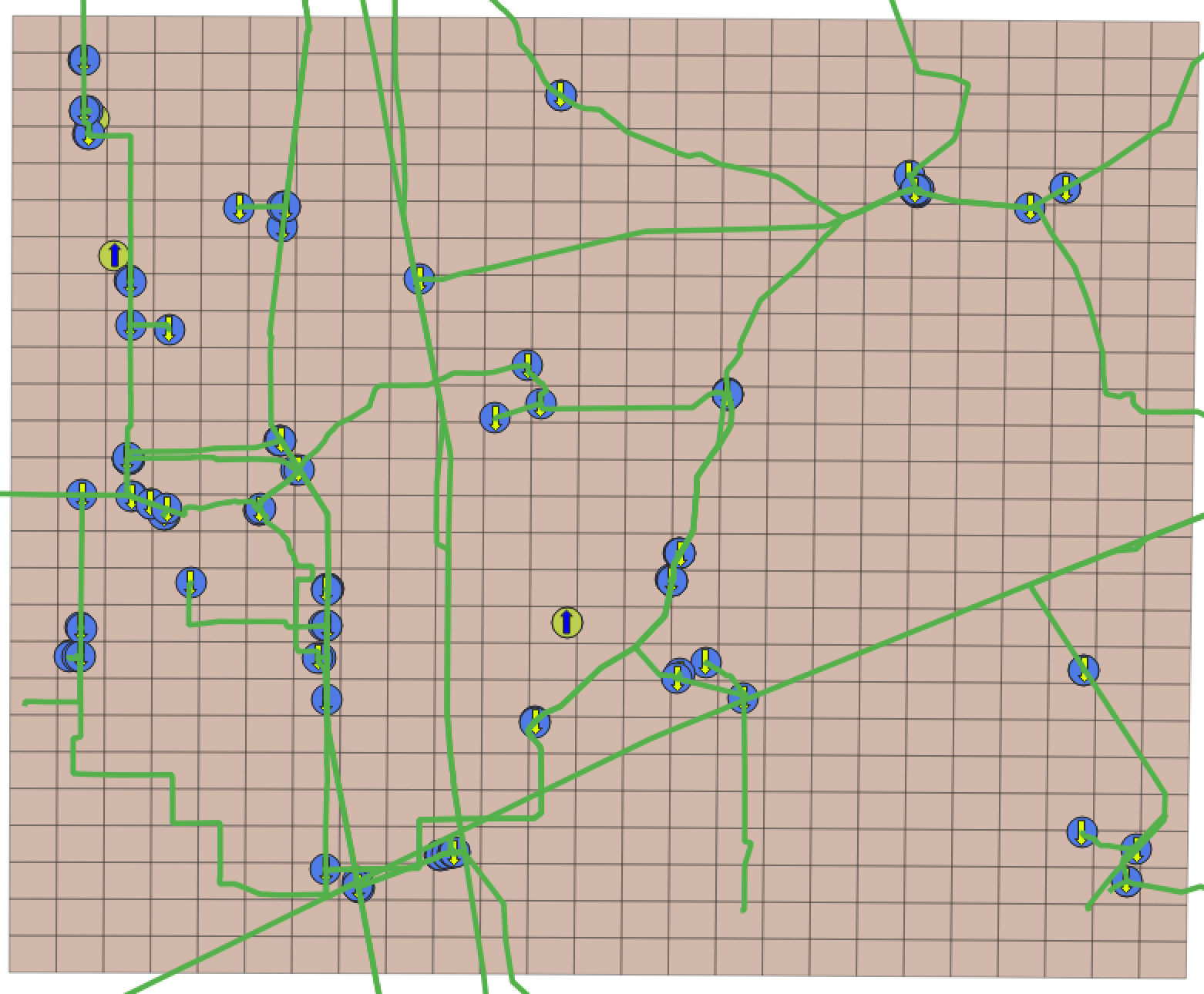}
    \caption{Transmission grid in selected area\label{fig:GridwBuses}}
\end{subfigure}
\caption{Area and topology of test instance \label{fig:Instance1}. Buses are marked in blue circles, generator in green circles, and transmission lines are marked in green.}
\end{figure}
\begin{figure}[ht]
\centering
\begin{subfigure}[b]{0.3\textwidth}
    \centering
    \includegraphics[width=\textwidth]{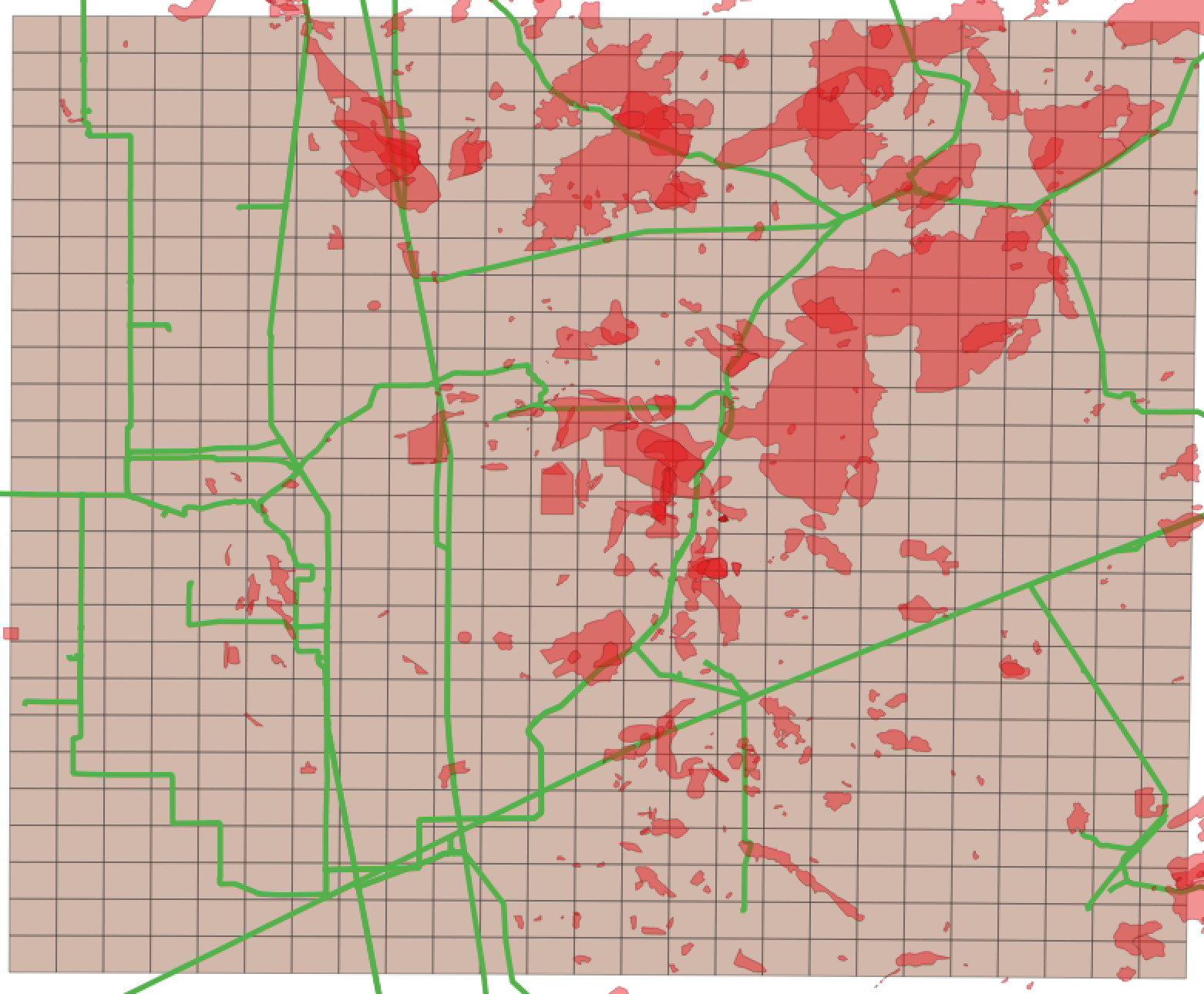}
    \caption{Historical wildfire perimeters \label{fig:WildfireMap}}
\end{subfigure}
\begin{subfigure}[b]{0.4\textwidth}
    \centering
    \includegraphics[width=\textwidth]{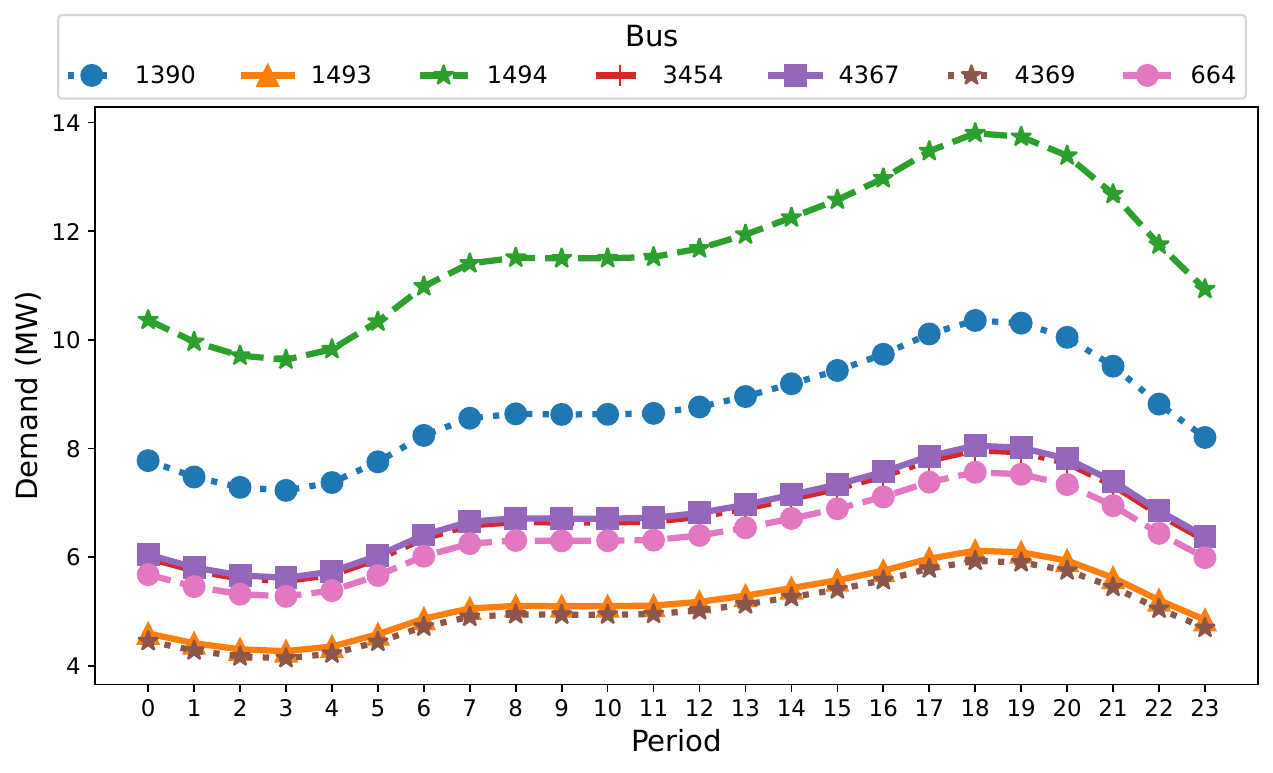}
    \caption{Selected area representative load curves \label{fig:LoadCurves}}
\end{subfigure}
\caption{Wildfire and load data for selected area \label{fig:Instance2}}
\end{figure}


\begin{thebibliography}{44}
\providecommand{\natexlab}[1]{#1}
\providecommand{\url}[1]{\texttt{#1}}
\expandafter\ifx\csname urlstyle\endcsname\relax
  \providecommand{\doi}[1]{doi: #1}\else
  \providecommand{\doi}{doi: \begingroup \urlstyle{rm}\Url}\fi

\bibitem[Abdelmalak and Benidris(2022)]{abdelmalak2022enhancing}
Michael Abdelmalak and Mohammed Benidris.
\newblock Enhancing power system operational resilience against wildfires.
\newblock \emph{IEEE Transactions on Industry Applications}, 58\penalty0
  (2):\penalty0 1611--1621, 2022.

\bibitem[Al~Saeed and Nazaripouya(2022)]{al2022impact}
Qasim Al~Saeed and Hamidreza Nazaripouya.
\newblock Impact of wildfires on power systems.
\newblock In \emph{2022 IEEE International Conference on Environment and
  Electrical Engineering and 2022 IEEE Industrial and Commercial Power Systems
  Europe (EEEIC/I\&CPS Europe)}, pages 1--5. IEEE, 2022.

\bibitem[Bayani and Manshadi(2023)]{bayani2023resilient}
Reza Bayani and Saeed~D Manshadi.
\newblock Resilient expansion planning of electricity grid under prolonged
  wildfire risk.
\newblock \emph{IEEE Transactions on Smart Grid}, 14\penalty0 (5):\penalty0
  3719--3731, 2023.

\bibitem[Blair and Jeroslow(1977)]{blair1977value}
Charles~E Blair and Robert~G Jeroslow.
\newblock The value function of a mixed integer program: {I}.
\newblock \emph{Discrete Mathematics}, 19\penalty0 (2):\penalty0 121--138,
  1977.

\bibitem[{California Department of Forestry and Fire
  Protection}(2024)]{calfiredata}
{California Department of Forestry and Fire Protection}.
\newblock Historic fire perimeters.
\newblock Data set, California Department of Forestry and Fire Protection, May
  2024.

\bibitem[Chen and Luedtke(2022)]{chen2022generating}
Rui Chen and James Luedtke.
\newblock On generating lagrangian cuts for two-stage stochastic integer
  programs.
\newblock \emph{INFORMS Journal on Computing}, 34\penalty0 (4):\penalty0
  2332--2349, 2022.

\bibitem[Choobineh and Mohagheghi(2016)]{choobineh2016power}
Moein Choobineh and Salman Mohagheghi.
\newblock Power grid vulnerability assessment against wildfires using
  probabilistic progression estimation model.
\newblock In \emph{2016 IEEE Power and Energy Society General Meeting (PESGM)},
  pages 1--5. IEEE, 2016.

\bibitem[Dian et~al.(2019)Dian, Cheng, Ye, Wu, Luo, Wang, Hui, Zhou, Zou, Yu,
  et~al.]{dian2019integrating}
Songyi Dian, Peng Cheng, Qiang Ye, Jirong Wu, Ruisen Luo, Chen Wang, Dafeng
  Hui, Ning Zhou, Dong Zou, Qin Yu, et~al.
\newblock Integrating wildfires propagation prediction into early warning of
  electrical transmission line outages.
\newblock \emph{IEEE Access}, 7:\penalty0 27586--27603, 2019.

\bibitem[Estrada-Garcia et~al.(2025)Estrada-Garcia, Jiang, and
  Moreira]{DLSData}
Juan-Alberto Estrada-Garcia, Ruiwei Jiang, and Alexandre Moreira.
\newblock Dynamic line-switching in transmission system operation amidst
  wildfire-prone weather under decision-dependent uncertainty, 2025.
\newblock Available for download at
  https://github.com/JestradaG/DynamicLineSwitching.

\bibitem[{FERC}(2022)]{caiso2019report}
{FERC}.
\newblock Form 1 - electric utility annual report.
\newblock Technical report, Federal Energy Regulatory Commision (FERC), March
  2022.

\bibitem[Fisher et~al.(2008)Fisher, O'Neill, and Ferris]{fisher2008optimal}
Emily~B Fisher, Richard~P O'Neill, and Michael~C Ferris.
\newblock Optimal transmission switching.
\newblock \emph{IEEE Transactions on Power Systems}, 23\penalty0 (3):\penalty0
  1346--1355, 2008.

\bibitem[Gupta and Grossmann(2011)]{gupta2011solution}
Vijay Gupta and Ignacio~E Grossmann.
\newblock Solution strategies for multistage stochastic programming with
  endogenous uncertainties.
\newblock \emph{Computers \& Chemical Engineering}, 35\penalty0 (11):\penalty0
  2235--2247, 2011.

\bibitem[Halofsky et~al.(2020)Halofsky, Peterson, and
  Harvey]{halofsky2020changing}
Jessica~E Halofsky, David~L Peterson, and Brian~J Harvey.
\newblock Changing wildfire, changing forests: the effects of climate change on
  fire regimes and vegetation in the pacific northwest, usa.
\newblock \emph{Fire Ecology}, 16\penalty0 (1):\penalty0 1--26, 2020.

\bibitem[Hedman et~al.(2010)Hedman, Ferris, O'Neill, Fisher, and
  Oren]{hedman2010co}
Kory~W Hedman, Michael~C Ferris, Richard~P O'Neill, Emily~Bartholomew Fisher,
  and Shmuel~S Oren.
\newblock Co-optimization of generation unit commitment and transmission
  switching with n-1 reliability.
\newblock \emph{IEEE Transactions on Power Systems}, 25\penalty0 (2):\penalty0
  1052--1063, 2010.

\bibitem[Heitsch and R{\"o}misch(2009)]{heitsch2009scenario}
Holger Heitsch and Werner R{\"o}misch.
\newblock Scenario tree modeling for multistage stochastic programs.
\newblock \emph{Mathematical Programming}, 118:\penalty0 371--406, 2009.

\bibitem[Hosseini and Parvania(2020)]{hosseini2020computationally}
Mohammad~Mehdi Hosseini and Masood Parvania.
\newblock Computationally efficient formulations for fault isolation and
  service restoration in distribution systems.
\newblock In \emph{2020 IEEE Power \& Energy Society General Meeting (PESGM)},
  pages 1--5. IEEE, 2020.

\bibitem[Huang et~al.(2017)Huang, Pardalos, and Zheng]{huang2017electrical}
Yuping Huang, Panos~M Pardalos, and Qipeng~P Zheng.
\newblock \emph{Electrical power unit commitment: deterministic and two-stage
  stochastic programming models and algorithms}.
\newblock Springer, 2017.

\bibitem[Jalilpoor et~al.(2022)Jalilpoor, Oshnoei, Mohammadi-Ivatloo, and
  Anvari-Moghaddam]{jalilpoor2022network}
Kamran Jalilpoor, Arman Oshnoei, Behnam Mohammadi-Ivatloo, and Amjad
  Anvari-Moghaddam.
\newblock Network hardening and optimal placement of microgrids to improve
  transmission system resilience: A two-stage linear program.
\newblock \emph{Reliability Engineering \& System Safety}, 224:\penalty0
  108536, 2022.

\bibitem[Kondylatos et~al.(2022)Kondylatos, Prapas, Ronco, Papoutsis,
  Camps-Valls, Piles, Fern{\'a}ndez-Torres, and
  Carvalhais]{kondylatos2022wildfire}
Spyros Kondylatos, Ioannis Prapas, Michele Ronco, Ioannis Papoutsis, Gustau
  Camps-Valls, Mar{\'\i}a Piles, Miguel-{\'A}ngel Fern{\'a}ndez-Torres, and
  Nuno Carvalhais.
\newblock Wildfire danger prediction and understanding with deep learning.
\newblock \emph{Geophysical Research Letters}, 49\penalty0 (17):\penalty0
  e2022GL099368, 2022.

\bibitem[Laporte and Louveaux(1993)]{laporte1993integer}
Gilbert Laporte and Fran{\c{c}}ois~V Louveaux.
\newblock The integer {L}-shaped method for stochastic integer programs with
  complete recourse.
\newblock \emph{Operations research letters}, 13\penalty0 (3):\penalty0
  133--142, 1993.

\bibitem[Li and Grossmann(2021)]{li2021review}
Can Li and Ignacio~E Grossmann.
\newblock A review of stochastic programming methods for optimization of
  process systems under uncertainty.
\newblock \emph{Frontiers in Chemical Engineering}, 2:\penalty0 622241, 2021.

\bibitem[Mohagheghi and Rebennack(2015)]{mohagheghi2015optimal}
Salman Mohagheghi and Steffen Rebennack.
\newblock Optimal resilient power grid operation during the course of a
  progressing wildfire.
\newblock \emph{International Journal of Electrical Power \& Energy Systems},
  73:\penalty0 843--852, 2015.

\bibitem[Mohseni-Bonab et~al.(2022)Mohseni-Bonab, Kamwa, Rabiee, and
  Chung]{mohseni2022stochastic}
Seyed~Masoud Mohseni-Bonab, Innocent Kamwa, Abbas Rabiee, and CY~Chung.
\newblock Stochastic optimal transmission switching: A novel approach to
  enhance power grid security margins through vulnerability mitigation under
  renewables uncertainties.
\newblock \emph{Applied Energy}, 305:\penalty0 117851, 2022.

\bibitem[Moreira et~al.(2024)Moreira, Pianc{\'o}, Fanzeres, Street, Jiang,
  Zhao, and Heleno]{moreira2024distribution}
Alexandre Moreira, Felipe Pianc{\'o}, Bruno Fanzeres, Alexandre Street, Ruiwei
  Jiang, Chaoyue Zhao, and Miguel Heleno.
\newblock Distribution system operation amidst wildfire-prone climate
  conditions under decision-dependent line availability uncertainty.
\newblock \emph{IEEE Transactions on Power Systems}, 2024.

\bibitem[Muhs et~al.(2020)Muhs, Parvania, Nguyen, and
  Palmer]{muhs2020characterizing}
John~W Muhs, Masood Parvania, Hieu~T Nguyen, and John~A Palmer.
\newblock Characterizing probability of wildfire ignition caused by power
  distribution lines.
\newblock \emph{IEEE Transactions on Power Delivery}, 36\penalty0 (6):\penalty0
  3681--3688, 2020.

\bibitem[Nagarajan et~al.(2016)Nagarajan, Yamangil, Bent, Van~Hentenryck, and
  Backhaus]{nagarajan2016optimal}
Harsha Nagarajan, Emre Yamangil, Russell Bent, Pascal Van~Hentenryck, and Scott
  Backhaus.
\newblock Optimal resilient transmission grid design.
\newblock In \emph{2016 Power Systems Computation Conference (PSCC)}, pages
  1--7. IEEE, 2016.

\bibitem[Nguyen et~al.(2020)Nguyen, Muhs, and Parvania]{nguyen2020preparatory}
Hieu~Trung Nguyen, John Muhs, and Masood Parvania.
\newblock Preparatory operation of automated distribution systems for
  resilience enhancement of critical loads.
\newblock \emph{IEEE Transactions on Power Delivery}, 36\penalty0 (4):\penalty0
  2354--2362, 2020.

\bibitem[Oliveira et~al.(2015)Oliveira, Souza, and Marcato]{oliveira2015time}
Fernando Luiz~Cyrino Oliveira, Reinaldo~Castro Souza, and Andr{\'e}
  Lu{\'\i}s~Marques Marcato.
\newblock A time series model for building scenarios trees applied to
  stochastic optimisation.
\newblock \emph{International Journal of Electrical Power \& Energy Systems},
  67:\penalty0 315--323, 2015.

\bibitem[Owen and Mehrotra(2002)]{owen2002value}
Jonathan~H Owen and Sanjay Mehrotra.
\newblock On the value of binary expansions for general mixed-integer linear
  programs.
\newblock \emph{Operations Research}, 50\penalty0 (5):\penalty0 810--819, 2002.

\bibitem[Pereira and Pinto(1991)]{pereira1991multi}
Mario~VF Pereira and Leontina~MVG Pinto.
\newblock Multi-stage stochastic optimization applied to energy planning.
\newblock \emph{Mathematical Programming}, 52:\penalty0 359--375, 1991.

\bibitem[Pianc{\'o} et~al.(2024)Pianc{\'o}, Moreira, Fanzeres, Jiang, Zhao, and
  Heleno]{pianco2024decision}
Felipe Pianc{\'o}, Alexandre Moreira, Bruno Fanzeres, Ruiwei Jiang, Chaoyue
  Zhao, and Miguel Heleno.
\newblock Decision-dependent uncertainty-aware distribution system planning
  under wildfire risk.
\newblock \emph{arXiv preprint arXiv:2405.04350}, 2024.

\bibitem[Pu et~al.(2015)Pu, Ruan, Huang, Wu, and Li]{pu2015study}
Zi~Pu, Jiang Ruan, Dao Huang, Tian Wu, and Peng Li.
\newblock Study on the breakdown characteristics of the transmission line gap
  under forest fire conditions.
\newblock \emph{International Transactions on Electrical Energy Systems},
  25\penalty0 (11):\penalty0 2731--2744, 2015.

\bibitem[Rathor and Saxena(2020)]{rathor2020energy}
Sumit~K Rathor and Dipti Saxena.
\newblock Energy management system for smart grid: An overview and key issues.
\newblock \emph{International Journal of Energy Research}, 44\penalty0
  (6):\penalty0 4067--4109, 2020.

\bibitem[Ryu and Jiang(2025)]{ryu2019nurse}
Minseok Ryu and Ruiwei Jiang.
\newblock Nurse staffing under absenteeism: A distributionally robust
  optimization approach.
\newblock \emph{Manufacturing \& Service Operations Management}, 2025.

\bibitem[Sayarshad and Ghorbanloo(2023)]{sayarshad2023evaluating}
Hamid~R Sayarshad and Romina Ghorbanloo.
\newblock Evaluating the resilience of electrical power line outages caused by
  wildfires.
\newblock \emph{Reliability Engineering \& System Safety}, 240:\penalty0
  109588, 2023.

\bibitem[Taylor et~al.(2023)Taylor, Rangarajan, Rhodes, Snodgrass, Lesieutre,
  and Roald]{taylor2023california}
Sofia Taylor, Aditya Rangarajan, Noah Rhodes, Jonathan Snodgrass, Bernie
  Lesieutre, and Line~A Roald.
\newblock California test system (cats): A geographically accurate test system
  based on the california grid.
\newblock \emph{IEEE Transactions on Energy Markets, Policy and Regulation},
  2023.

\bibitem[Udren et~al.(2022)Udren, Bolton, Dietmeyer, Rahman, and
  Flores-Castro]{udren2022managing}
Eric~A Udren, Chris Bolton, Dan Dietmeyer, Tariq Rahman, and Sergio
  Flores-Castro.
\newblock Managing wildfire risks: Protection system technical developments
  combined with operational advances to improve public safety.
\newblock \emph{IEEE Power and Energy Magazine}, 20\penalty0 (1):\penalty0
  64--77, 2022.

\bibitem[Wang et~al.(2022)Wang, Xue, Guo, Shahidehpour, Zhou, Wang, and
  Sun]{wang2022coordinated}
Ke~Wang, Yixun Xue, Qinglai Guo, Mohammad Shahidehpour, Quan Zhou, Bin Wang,
  and Hongbin Sun.
\newblock A coordinated reconfiguration strategy for multi-stage resilience
  enhancement in integrated power distribution and heating networks.
\newblock \emph{IEEE Transactions on Smart Grid}, 14\penalty0 (4):\penalty0
  2709--2722, 2022.

\bibitem[Yang et~al.(2024{\natexlab{a}})Yang, Rhodes, Yang, Roald, and
  Ntaimo]{yang2024multiperiod}
Hanbin Yang, Noah Rhodes, Haoxiang Yang, Line Roald, and Lewis Ntaimo.
\newblock Multi-period power system risk minimization under wildfire
  disruptions.
\newblock \emph{IEEE Transactions on Power Systems}, 2024{\natexlab{a}}.

\bibitem[Yang et~al.(2024{\natexlab{b}})Yang, Yang, Rhodes, Roald, and
  Ntaimo]{yang2024multistage}
Hanbin Yang, Haoxiang Yang, Noah Rhodes, Line Roald, and Lewis Ntaimo.
\newblock Multistage stochastic program for mitigating power system risks under
  wildfire disruptions.
\newblock \emph{Electric Power Systems Research}, 234:\penalty0 110773,
  2024{\natexlab{b}}.

\bibitem[Yu and Shen(2022)]{yu2022multistage}
Xian Yu and Siqian Shen.
\newblock Multistage distributionally robust mixed-integer programming with
  decision-dependent moment-based ambiguity sets.
\newblock \emph{Mathematical Programming}, 196\penalty0 (1):\penalty0
  1025--1064, 2022.

\bibitem[Zhang et~al.(2017)Zhang, Shen, Li, and Mathieu]{zhang2017two}
Yiling Zhang, Siqian Shen, Bowen Li, and Johanna~L Mathieu.
\newblock Two-stage distributionally robust optimal power flow with flexible
  loads.
\newblock In \emph{2017 IEEE Manchester PowerTech}, pages 1--6. IEEE, 2017.

\bibitem[Zou et~al.(2018)Zou, Ahmed, and Sun]{zou2018multistage}
Jikai Zou, Shabbir Ahmed, and Xu~Andy Sun.
\newblock Multistage stochastic unit commitment using stochastic dual dynamic
  integer programming.
\newblock \emph{IEEE Transactions on Power Systems}, 34\penalty0 (3):\penalty0
  1814--1823, 2018.

\bibitem[Zou et~al.(2019)Zou, Ahmed, and Sun]{zou2019stochastic}
Jikai Zou, Shabbir Ahmed, and Xu~Andy Sun.
\newblock Stochastic dual dynamic integer programming.
\newblock \emph{Mathematical Programming}, 175:\penalty0 461--502, 2019.

\end{thebibliography}
\end{document}